\newcommand{\R}{\mathbb{R}}
\newcommand{\N}{\mathbb{N}}
\newcommand{\T}{\mathbb{T}}
\newcommand{\Z}{\mathbb{Z}}
\newcommand{\C}{\mathbb{C}}
\newcommand{\vertiii}[1]{{\vert\kern-0.25ex\vert\kern-0.25ex\vert #1 
    \vert\kern-0.25ex\vert\kern-0.25ex\vert}}
\newtheorem{op}{Open Problem}
\newtheorem{thm}{THEOREM}[section]
\newtheorem{remark}[thm]{REMARK}
\newtheorem{lem}[thm]{LEMMA}
\newtheorem{defn}[thm]{DEFINITION}
\newtheorem{prop}[thm]{PROPOSITION}
\newtheorem{cor}[thm]{COROLLARY}
\newcounter{thmbiss}
\def\<{\langle}
\def\>{\rangle}
\numberwithin{equation}{section}
\date{\empty}
\title{Fredholm transformation on Laplacian and  rapid stabilization for the heat equation}
\author{Ludovick Gagnon\thanks{Université de Lorraine, CNRS, Inria équipe SPHINX,  F-54000 Nancy, France. E-mail: \texttt{ludovick.gagnon@inria.fr.}}, \;\; Amaury Hayat\thanks{CERMICS, \'{E}cole des Ponts ParisTech, 6 - 8, Avenue Blaise Pascal, Cité Descartes—Champs sur Marne, 77455 Marne la Vall\'{e}e, France. E-mail: \texttt{amaury.hayat@enpc.fr.}},\;\; Shengquan Xiang\thanks{Bâtiment des Mathématiques, EPFL, Station 8, CH-1015 Lausanne, Switzerland.  E-mail: \texttt{shengquan.xiang@epfl.ch.}}, \;\; Christophe Zhang\thanks{Université de Lorraine, CNRS, Inria équipe SPHINX,  F-54000 Nancy, France. E-mail: \texttt{Christophe.zhang@inria.fr.}}}
\begin{document}
\maketitle

\begin{abstract}
We study the rapid stabilization of the heat equation  on the 1-dimensional torus using the backstepping method with a Fredholm transformation. We prove that, under some assumption on the control operator, two scalar controls are necessary and sufficient to get controllability and rapid stabilization. This classical framework allows us to present the backstepping method with Fredholm transformations on Laplace operators in a sharp functional setting, which is the main objective of this work. Finally, we prove that the same Fredholm transformation also leads to the local rapid stability of the viscous Burgers equation. 
\\

\noindent
  {\sc Keywords:} {Fredholm transformation, backstepping, rapid stabilization, controllability}
\\
\noindent {\sc 2020 MSC:} 93C20, 93B17, 93D15, 93D23.
\end{abstract}

\tableofcontents

\section{Introduction}
\label{sec:introduction}

We consider the following heat equation with two internal controls,
\begin{equation}
\left\{ \begin{array}{ll}
\partial_t u - \Delta u = v_{1}(t)\phi_{1}+v_{2}(t)\phi_{2}, & (t,x) \in (0, +\infty) \times \T, \\
u|_{t=0} = u_0(x), & x \in \T,
\end{array} \right.
\label{eq:linearizedWW}
\end{equation}
with $\T=\R/2\pi \Z$ the one-dimensional torus,  $(v_{1},v_{2})\in L^2((0,+\infty);\R)$ real-valued scalar controls to be defined and \textcolor{black}{$(\phi_{1},\phi_{2}) \in H^\sigma(\T;\R)$ with \textcolor{black}{$\sigma \in \R$}} \textcolor{black}{real-valued potentials}. Our goal is to design a suitable feedback law to stabilize the system \eqref{eq:linearizedWW} exponentially with an arbitrarily large decay rate. 

It would be more natural to first consider the simpler system with only one scalar control
\begin{equation}
\left\{ \begin{array}{ll}
\partial_t u - \Delta u = v(t)\phi, & (t,x) \in (0, +\infty) \times \T, \\
u|_{t=0} = u_0(x), & x \in \T,
\end{array} \right.
\label{eq:linearizedWW_single}
\end{equation}
with $v\in L^2((0,+\infty);\R)$ a real-valued control to be defined and $\phi \in H^\sigma(\T;\R)$. As it turns out, as simple as it is, this system is not controllable, due to the degeneracy of the eigenvalues of the Laplacian operator \textcolor{black}{defined on $H^s(\T;\R)$} (see Section \ref{sec:functionalsetting}). In this paper we show that at least two internal controls are required and that, in fact, two controls are enough provided that $\phi_{1}$ and $\phi_{2}$ satisfy some good conditions. In order to obtain the rapid stabilization result we then \textcolor{black}{propose} a double backstepping method, detailed in Section \ref{sec:strategyandoutline}. 

\subsection{Main result}
Let $H^s(\T) , s\in \R$ denotes the classical Sobolev space\footnote{The notation $H^s(\T)$ is shorten to $H^s$ here and there to simplify the notations when the context is clear.} on $\T$ and define $H^{s-}(\T)$ (resp. $(H^{s+}(\T))'$) as $H^{s-\epsilon}(\T)$ (resp. $(H^{s+\epsilon}(\T))'$) for any $\epsilon>0$ small (see \eqref{def-H-}--\eqref{def-H+} for detailed definitions).    Our main result is the following:
\begin{thm}\label{thm-main-linear}
Let  $m\in\mathbb{R}_{+}$ and $\phi_{1},\phi_{2}\in H^{m-1/2-}$, such that 
\begin{equation}
   \phi_{1}=\sum\limits_{n\in\mathbb{N}^{*}}a_{n}^{1}\sin(nx),\;\;    \phi_{2}=\sum\limits_{n\in\mathbb{N}}a_{n}^{2}\cos(nx)
\end{equation}
with,
\[
 a^2_0\neq 0 \; \text{ and } \; c n^{-m}<  |a_n^{k}|< C n^{-m},\;\text{ for }k\in\{1,2\} \text{ and }n\in\N^*.
\]
For any $\lambda>0$, there exist $K_1$ and $K_2$ bounded feedback functionals on 
$H^{m+1/2+}$ such that for any
$y_0\in H^{m+r}$ with $r\in (-1/2, 1/2)$, the equation
\begin{gather}\label{clo-lop-sys-lin}
    \begin{cases}
    \partial_t y-\Delta y= K_{1}(y)\phi_{1} +K_{2}(y)\phi_{2} , & \textcolor{black}{(t,x) \in (0, +\infty) \times \T,} \\
    y(\textcolor{black}{0,\cdot})= y_0, & \textcolor{black}{x\in \T},
    \end{cases}
\end{gather}
has a unique solution satisfying 
\begin{equation}
      y\in C^0([0, +\infty); H^{m+r}\textcolor{black}{(\T)})\cap L^2_{loc}((0, +\infty); H^{m+r+1}\textcolor{black}{(\T)})\cap H^1_{loc}((0, +\infty); H^{m+r-1}\textcolor{black}{(\T)}).
  \end{equation}
  Moreover, we have the following exponential stability estimate,
\textcolor{black}{
  \begin{equation}
      \|y(t,\cdot)\|_{H^{m+r}}\leq C e^{-\lambda t} \|y_{0}\|_{H^{m+r}}, \quad \forall t\in [0, +\infty),
  \end{equation}
  \textcolor{black}{where $C= C_r(\lambda, m)$ is a constant independent of $y_0$.}
  }
\end{thm}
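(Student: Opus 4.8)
The plan is to implement the backstepping method with a Fredholm transformation, turning the closed-loop system \eqref{clo-lop-sys-lin} into a target system for which exponential decay is obvious. Concretely, I would look for an isomorphism $T$ on a suitable scale of Sobolev spaces such that, if $y$ solves \eqref{clo-lop-sys-lin}, then $z = Ty$ solves the exponentially stable target equation $\partial_t z - \Delta z = -\lambda z$ (or, more carefully, $\partial_t z = (\Delta-\lambda)z$ with the control absorbed into the transformation). Writing $T$ in the eigenbasis $\{\sin(nx),\cos(nx)\}$ of $-\Delta$, the intertwining relation $T(\Delta + B_1 K_1 + B_2 K_2) = (\Delta - \lambda) T$ (where $B_k\psi = \phi_k$) becomes a system of algebraic equations on the matrix coefficients $\langle T e_j, e_k\rangle$; the key algebraic fact one exploits is that the spectral gap of $-\Delta$ on $\T$ is bounded but the eigenvalues $n^2$ grow, while the feedback acts through the fixed vectors $\phi_1,\phi_2$ whose Fourier coefficients decay like $n^{-m}$. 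Since there are two eigenvalues ($\sin$ and $\cos$ branches) for each frequency $n\ge 1$, one control alone cannot resolve the $2$-dimensional eigenspaces — this is exactly the non-controllability mentioned in the introduction — whereas with the odd potential $\phi_1$ (sine series) and the even potential $\phi_2$ (cosine series, with $a_0^2\neq 0$ to handle the zero mode) the two branches decouple and each becomes a scalar backstepping problem.

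The steps, in order, would be: (1) Fix the target operator $A_\lambda := \Delta - \lambda$ and write down the desired kernel equation for $T = \mathrm{Id} + Q$ with $Q$ to be constructed; project onto the sine and cosine subspaces to get two independent infinite linear systems for the coefficients $q_{jk}$. (2) Solve these systems explicitly: the off-diagonal coefficients are forced to be $q_{jk} = \dfrac{a_j^k \, K_k(e_j)}{j^2 - k^2 + \lambda}$-type expressions, and imposing that $T$ be invertible (a "controllability-type" condition) determines the feedback functionals $K_1, K_2$ as series in the dual basis. The growth/decay hypothesis $c n^{-m} < |a_n^k| < C n^{-m}$ is precisely what makes the resulting $K_k$ bounded on $H^{m+1/2+}$ and $Q$ bounded, compact, and invertible-by-Fredholm-alternative on the relevant spaces — one checks the Hilbert–Schmidt-type sums $\sum |q_{jk}|^2 \langle j\rangle^{2s}\langle k\rangle^{-2s}$ converge for $s = m+r$, $r\in(-1/2,1/2)$. (3) Prove $T\colon H^{m+r} \to H^{m+r}$ is an isomorphism for every such $r$, and that it conjugates the two semigroups; this requires showing $1$ is not an eigenvalue of $-Q$, which is the genuine "non-degeneracy" step and typically follows from a unique continuation / linear-independence argument using the hypotheses on the $a_n^k$. (4) Transfer well-posedness: since $\partial_t z = A_\lambda z$ is generated by an analytic semigroup with the stated regularity ($C^0 H^{m+r}\cap L^2 H^{m+r+1}\cap H^1 H^{m+r-1}$), pull this back through $T^{-1}$ to get existence, uniqueness and regularity of $y$. (5) Conclude: $\|y(t)\|_{H^{m+r}} = \|T^{-1} z(t)\|_{H^{m+r}} \le \|T^{-1}\|\,\|z(t)\|_{H^{m+r}} \le \|T^{-1}\|\,\|T\|\, e^{-\lambda t}\|y_0\|_{H^{m+r}}$, giving $C_r(\lambda,m) = \|T^{-1}\|\|T\|$, independent of $y_0$.

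The main obstacle is step (3): proving that the Fredholm transformation $T$ is boundedly invertible on the whole range $r\in(-1/2,1/2)$ of Sobolev indices, and simultaneously that the feedback operators $K_1,K_2$ extracted from the kernel equations are well-defined bounded functionals on $H^{m+1/2+}$ with the "sharp" regularity claimed. This is delicate because invertibility of $T$ is not automatic — Fredholm transformations can fail to be injective, and ruling this out amounts to a unique-continuation-type statement whose proof must use the precise two-sided bounds on $|a_n^k|$ (the lower bound $c n^{-m}$ prevents the feedback from "missing" a mode, the upper bound $C n^{-m}$ gives the boundedness) together with the nonvanishing $a_0^2\neq 0$. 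The endpoint indices $r = \pm 1/2$ being excluded is a symptom of exactly where the Hilbert–Schmidt estimates for $Q$ become borderline, so the argument must be quantitative enough to see that the open interval works uniformly. A secondary technical point is checking that the analytic-semigroup maximal-regularity estimates for the target system survive conjugation by $T$, which is routine once $T$ is known to be an isomorphism commuting appropriately with the functional-analytic framework of Section \ref{sec:functionalsetting}.
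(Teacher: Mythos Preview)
Your proposal is essentially correct and follows the same backstepping-via-Fredholm-transformation architecture as the paper, including the key decoupling into the sine and cosine branches. There are, however, two points where your sketched execution diverges from the paper's, and one of them is not merely cosmetic.

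First, you determine $K_1,K_2$ by ``imposing that $T$ be invertible,'' whereas the paper fixes them by the normalization $T\phi_k=\phi_k$ (the $TB=B$ condition). This is what makes the operator equation linear in $(T,K)$ and gives a unique candidate; invertibility is then a separate issue, handled by showing $T-\mathrm{Id}$ is compact and $\ker T^*=\{0\}$ via a spectral/resolvent argument that ultimately contradicts $a_n^k\neq 0$. Your ``unique continuation / linear-independence'' description is in the right spirit, but the actual mechanism is this $T\phi=\phi$ identity applied to a putative element of $\ker T^*$. Second, and more substantively, in step (4) you propose to \emph{pull back} well-posedness from the target system through $T^{-1}$. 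The paper does the reverse: it proves well-posedness of the closed-loop system \eqref{clo-lop-sys-lin} directly by a fixed-point argument (using that $K_k$ is bounded on $H^{m+3/4}$, strictly below $H^{m+1}$, which buys the small time factor), and only then applies $T$ to obtain the target equation for $z=Ty$. The reason this matters is that the operator equality $T(A+BK)=(A-\lambda)T$ holds only for $\varphi\in H^{m+r+1}$ with values in $H^{m+r-1}$, $r\in(-1/2,1/2)$ --- not on $D(A+BK)$ --- so conjugating semigroups in the direction you suggest would require checking more than what is actually established (cf.\ Remark~\ref{rmk:regularityoperatorequality}). Finally, a minor technical point: the paper organizes the boundedness/isomorphism of $T$ on $H^{m+s}$ for $s\in(-3/2,3/2)$ through Riesz-basis lemmas for the families $\{n^{-s}q_n\}$ rather than Hilbert--Schmidt sums; either route should work, but the Riesz-basis formulation is what makes the sharp range of $s$ (and hence the open interval $r\in(-1/2,1/2)$) transparent.
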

\begin{remark}
Note that the feedback laws $K_{1}$ and $K_{2}$, as well as the backstepping transformations that we construct to obtain them, do not depend on $r\in (-1/2, 1/2)$. Nonetheless, they stabilize the system in the $H^{m+r}$ spaces with $r\in (-1/2, 1/2)$ (for an $H^{m+r}$ initial condition). 
\end{remark}

\begin{remark}
The assumption $a^2_0\neq 0$ is necessary, as it is related to the eigenfunction $1$ of 0 eigenvalue. Otherwise, one can easily check that the ``mass",  $\int_{\T} y(t, x) dx$,  is conserved. In this case, instead of converging to $0$, the solution of the closed-loop system converges exponentially to the final equilibrium state $\tilde{y}(x):= \int_{\T} y_0(x) dx$.
\end{remark}

The same feedback also stabilizes several related  nonlinear systems such  as the viscous Burgers equation and the nonlinear heat equations. 
More precisely, {\color{black} simply as an example}, we have the following theorem  {\color{black} which corresponds to the case $m=r=0$. }
\begin{thm}\label{thm-main-burgers}
Let $\phi_{1},\phi_{2}\in H^{-1/2-}$, such that 
\begin{equation}
   \phi_{1}=\sum\limits_{n\in\mathbb{N}^{*}}a_{n}^{1}\sin(nx),\;\;    \phi_{2}=\sum\limits_{n\in\mathbb{N}}a_{n}^{2}\cos(nx),
\end{equation}
with
\[
 a^2_0\neq 0 \; \text{ and } \;c <  |a_n^{k}|< C ,\;\text{ for }k\in\{1,2\} \text{ and }n\in\N^*.
\]
For any $\lambda>0$, there exists $K_1$ and $K_2$ bounded feedback functionals on 
$H^{1/2+}$ such that, for any
$y_0\in L^{2}$, the equation
\begin{gather}\label{clo-lop-sys-nonlin}
    \begin{cases}
    \partial_t y-\Delta y+\partial_{x}(y^{2}/2)=  K_{1}(y)\phi_{1}+ K_{2}(y)\phi_{2}, \\
    y(0)= y_0,
    \end{cases}
\end{gather}
has a unique solution 
\begin{equation}
      y\in C^0([0, +\infty); L^{2}(\T))\cap L^2_{loc}((0, +\infty); H^{1}(\T))\cap H^1_{loc}((0, +\infty); H^{-1}(\T)).
  \end{equation}
  Moreover, there exists $\delta>0$ such that for any  $\|y_{0}\|_{L^{2}}<\delta$,   \textcolor{black}{we have the following exponential stability estimate}
\textcolor{black}{
  \begin{equation}
      \|y(t,\cdot)\|_{L^2}\leq C e^{-\lambda t} \|y_{0}\|_{L^2}, \quad \forall t\in [0, +\infty),
  \end{equation}
  where $C= C(\lambda)$ is a constant independent of $y_0$.
  }
\end{thm}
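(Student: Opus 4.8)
The plan is to deduce the nonlinear result from the linear one, Theorem \ref{thm-main-linear} in the case $m=r=0$, by treating the Burgers nonlinearity $\partial_x(y^2/2)$ as a perturbation and running a fixed-point/energy argument in the exponentially weighted space. First I would record what Theorem \ref{thm-main-linear} gives us for free: the feedbacks $K_1,K_2$ are bounded on $H^{1/2+}$, and the linear closed-loop semigroup $S(t)$ generated by $\Delta + K_1(\cdot)\phi_1 + K_2(\cdot)\phi_2$ satisfies $\|S(t)\|_{\mathcal L(L^2)} \le C e^{-\lambda' t}$ for any prescribed $\lambda' > 0$; moreover the usual parabolic smoothing persists, i.e. $S(t)$ also maps $L^2 \to H^1$ with the integrable-in-time bound $\|S(t)\|_{\mathcal L(L^2,H^1)} \lesssim t^{-1/2} e^{-\lambda' t}$, and the maximal-regularity estimate in $L^2_{loc}((0,\infty);H^1)\cap H^1_{loc}((0,\infty);H^{-1})$ holds with exponential weights. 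I would fix $\lambda' = \lambda + 1$ (say) so that there is room to absorb the nonlinear losses and still land at the advertised rate $\lambda$.

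Next I would set up the Duhamel formulation
\[
y(t) = S(t) y_0 - \int_0^t S(t-s)\,\partial_x\!\left(\tfrac{y(s)^2}{2}\right)\ud s,
\]
and look for a fixed point in the space
\[
X := \Big\{ y \in C^0([0,\infty);L^2(\T)) \cap L^2_{loc}((0,\infty);H^1(\T)) \ :\ \|y\|_X := \sup_{t\ge 0} e^{\lambda t}\|y(t)\|_{L^2} + \Big(\int_0^\infty e^{2\lambda s}\|y(s)\|_{H^1}^2 \ud s\Big)^{1/2} < \infty \Big\}.
\]
The key nonlinear estimate is the bilinear bound: for the map $B(y,z)(t) = \int_0^t S(t-s)\,\partial_x(y(s)z(s)/2)\ud s$, using that $\partial_x$ costs one derivative which is exactly paid by the $t^{-1/2}$ smoothing of $S$, and that in dimension one $H^1(\T) \hookrightarrow L^\infty(\T)$ so $\|yz\|_{L^2} \le \|y\|_{L^\infty}\|z\|_{L^2} \lesssim \|y\|_{H^1}\|z\|_{L^2}$, one obtains $\|B(y,z)\|_X \le C_0 \|y\|_X \|z\|_X$ with a constant $C_0$ depending only on $\lambda$. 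Together with $\|S(\cdot)y_0\|_X \le C_1 \|y_0\|_{L^2}$, a standard contraction-mapping argument (Banach fixed point on a small ball, or the abstract lemma of Picard type) yields a unique solution in $X$ whenever $\|y_0\|_{L^2} < \delta := 1/(4 C_0 C_1)$, and by construction $\|y\|_X \le 2 C_1 \|y_0\|_{L^2}$, which gives exactly the exponential decay $\|y(t)\|_{L^2} \le C e^{-\lambda t}\|y_0\|_{L^2}$. The regularity $y \in H^1_{loc}((0,\infty);H^{-1})$ then follows by reading the equation $\partial_t y = \Delta y + K_1(y)\phi_1 + K_2(y)\phi_2 - \partial_x(y^2/2)$: each term on the right lies in $L^2_{loc}((0,\infty);H^{-1})$, the last one because $y^2 \in L^2_{loc}((0,\infty);H^1) \subset L^2_{loc}((0,\infty);L^2)$ so $\partial_x(y^2/2) \in L^2_{loc}((0,\infty);H^{-1})$.

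The main obstacle is making sure the perturbation argument is genuinely compatible with the linear theorem as stated, i.e. that the feedbacks $K_1,K_2$ — which are only asserted bounded on $H^{1/2+}$, not on $L^2$ — still make sense along the Duhamel iterates, and that the closed-loop linear semigroup really does enjoy the exponential $L^2 \to H^1$ smoothing with an integrable singularity at $t=0$. For the first point one uses that any element of $X$ lies in $H^1 \subset H^{1/2+}$ for a.e.\ $t>0$, with the weight controlling the $L^2_t$ norm, so $t \mapsto K_j(y(t))$ is a well-defined $L^2_{loc}$ scalar, and in fact the nonlocal feedback terms can be folded into the linear semigroup $S(t)$ from the outset (they are already part of the closed-loop generator), so only the Burgers term is treated perturbatively. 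For the second point, one notes that $\Delta + K_1(\cdot)\phi_1 + K_2(\cdot)\phi_2$ is a bounded (indeed smoothing, since $\phi_j \in H^{-1/2-}$ and $K_j$ has finite order) perturbation of $\Delta$ relative to the scale $H^s$, so it generates an analytic semigroup on $L^2$ with domain $H^2$ up to lower-order corrections, and the exponential decay from Theorem \ref{thm-main-linear} upgrades, by interpolation between the $C^0([0,\infty);L^2)$ and $L^2_{loc}((0,\infty);H^1)$ bounds already in that theorem's conclusion, to the weighted smoothing estimate needed above. Once these two compatibility points are checked the rest is the routine contraction argument sketched above.
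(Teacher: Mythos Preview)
Your approach differs from the paper's. The paper does not work with the linear closed-loop semigroup $S(t)$ via Duhamel; instead it applies the backstepping isomorphism $T$ directly to the nonlinear solution $y$, obtaining for $z := Ty$ the equation $z_t - \Delta z + \lambda z + T\partial_x((T^{-1}z)^2)/2 = 0$, and then runs a plain $L^2$ energy estimate on $z$, using that $T, T^{-1}$ are bounded on both $L^2$ and $H^1$ to bound the nonlinear term by $C\|z\|_{H^1}^2\|z\|_{L^2}$. This avoids any smoothing or maximal-regularity statement about $S(t)$ beyond what is already in Theorem \ref{thm-main-linear}, and the smallness propagation to all $t\ge 0$ is done by iterating on time intervals of fixed length. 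Your Duhamel/weighted-space route is a legitimate alternative for the small-data stability, but the pointwise smoothing $\|S(t)\|_{L^2\to H^1} \lesssim t^{-1/2}e^{-\lambda' t}$ and the weighted $L^2_t H^1_x$ bound you invoke are \emph{not} consequences of Theorem \ref{thm-main-linear} by ``interpolation between the $C^0 L^2$ and $L^2_{loc} H^1$ bounds'' as you suggest. They are most cleanly obtained by going through $T$ (since $z=Ty$ solves the pure damped heat equation, for which these estimates are trivial), which is exactly the object the paper works with directly. Your analytic-semigroup justification is also delicate: $y \mapsto K_j(y)\phi_j$ maps $H^{1/2+\varepsilon}\to H^{-1/2-\varepsilon}$ and is therefore not an $L^2$-bounded perturbation of $\Delta$ but only a form-bounded one, which does suffice but needs to be argued properly.

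There is one genuine gap. The theorem asserts global well-posedness for \emph{every} $y_0 \in L^2$, with the exponential decay claimed only for small data; your weighted fixed-point argument in $X$ only produces a solution when $\|y_0\|_{L^2} < \delta$. The paper handles large-data existence separately (Lemma \ref{lem:wellposedburgers}): a local-in-time contraction in $C^0_t L^2_x \cap L^2_t H^1_x$ for arbitrary data, followed by an a priori energy estimate exploiting the cancellation $\langle y, \partial_x(y^2)\rangle = 0$ to get $\|y(t)\|_{L^2} \le e^{Ct}\|y_0\|_{L^2}$ and hence global existence. You would need to add a step of this kind; the small-ball contraction alone does not cover the existence statement for general $y_0$.
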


\subsection{Related results: the heat equation and the backstepping method}
\label{sec:previouswork}

There exists various way to design feedback laws for infinite dimensional systems: Riccati equations (see for instance \cite{MR1132440, MR3556805} and the references therein), Gramian methods \cite{MR1088227, Urquiza}, Lyapunov functionals (see for instance \cite{bastin2016stability, Coron-trelat-2004, Hayat-2019, Hayat-2019cocv, Xiang-heat-2020}) or  pole-shifting techniques \cite{MR980228,MR472175}, and among others. The backstepping method is also among these methods, and its use for infinite dimensional systems can be traced back to Russell \cite{russell1978controllability} and to Balogh and Krstic \cite{BK1} (we refer to \cite{CoronBook,KKK_Book,Sontag_Book} for an introduction to the finite dimensional backstepping method). Obtained as the continuum limit of a stabilization method applied to space discretizations of PDEs, the backstepping method relied at first on a Volterra transformation of the second kind mapping the system to another stable target system. The Volterra transformation having the advantage of always being invertible, only the existence remains to prove, which is equivalent to solving a PDE of the kernel on a triangular domain. These PDEs usually do not enter in the classical Cauchy problem framework, but different techniques are now known to solve the kernel equation: successive approximations \cite{KrsticSmyshlyaev_Book},  explicit representations \cite{KrsticSmyshlyaev_Book} or method of characteristics \cite{MR4160602}. There exists now a vast literature on the backstepping method with Volterra transformations: let us cite for the heat/parabolic equation \cite{BK1, BK2,2015-Coron-Nguyen-preprint}, for hyperbolic systems \cite{CoronVazquezKrsticBastin13} and for the viscous Burgers equation \cite{MR4265693}. We refer to \cite{KrsticSmyshlyaev_Book} to a general overview of the backstepping method with Volterra transformations.   

As of late, the Fredholm transformation was introduced for the backstepping method as an alternative for certain limitations of the Volterra transformation. In particular, it seems better suited for internal stabilization problems \cite{CGM,zhang:hal-01905098}.
The idea of using transformations remains the same, but proving the existence and invertibility of the transformation is generally more involved. There are mainly two ways to prove the existence of the transformation, either by direct methods \cite{CoronHuOlive16, MR4160602} or, more commonly, by proving the existence of a Riesz basis. For the latter, we again distinguish two cases: either the Riesz basis is deduced directly by an isomorphism applied on an eigenbasis \cite{coron:hal-03161523, Zhang-finite, zhang:hal-01905098} or the existence of a Riesz basis  follows by controllability assumptions and sufficient growth of the eigenvalues of the spatial operator allowing in particular to prove that the family is quadratically close to the eigenfunctions \cite{CGM,CoronLu14,CoronLu15,GLM} (see Section \ref{sec:Rieszdef} and Section \ref{sec:proof-prop} for a definition).

Finally, a large number of papers in the literature have addressed the exponential stability of the heat equations via feedback laws by different means (see for instance \cite{MR3264229, BK1, MR1995484,  MR1878234, MR4159677, MR711000, MR894809, MR2002146} and references therein). We highlight two recent papers on the subject, one on the exponential stability through impulsive feedback for the heat equation \cite{MR3680946}, and the second \cite{Xiang-heat-2020} linked with the quantitative estimate on the stabilization cost $C_r(\lambda,m)$ (we refer to Section \ref{subsubsection:coststab} and Section \ref{subsection:quantitative} on this subject).

\subsection{Contributions}
One of the major contributions of this paper is the thorough study of the optimal functional setting for backstepping transformations for the Laplace operator. 
To illustrate this into perspective, we first present the backstepping problem in a finite-dimensional setting.
\subsubsection{Backstepping in finite dimension}
In a very general setting, finding backstepping transformations for exponential stabilization consists in solving a specific set of equations. Consider the control system, 
\begin{equation}
    \dot{x}= Ax+ Bu,
\end{equation}
the aim is to find suitable a invertible operator $T$ and a feedback law $K$ such that, if $x_1$ is a solution of
\begin{equation}\label{abstract-backsteppin-system}
    \dot{x}_1= Ax_1+ BK x_1,
\end{equation}
then $x_2:= Tx_1$ is a solution of
\begin{equation}\label{abstract-backstepping-target}
    \dot{x}_2= \tilde{A} x_2,
\end{equation}
where $\tilde{A}$ is an exponentially stable operator. In many cases, one makes the classical choice $\tilde{A}=A-\lambda I$.

Formally, we have
\begin{equation}
    \tilde{A} T x_1=\tilde{A}x_2=\dot{x}_2=T \dot{x}_1= T(A+ BK)x_1.
\end{equation}
Thus, $(T,K)$ should solve the following operator equation
\begin{equation}\label{FirstOpEq}
    T(A+BK)=\tilde{A}T.
\end{equation}
This equation is nonlinear in $(T,K)$, which makes it difficult to solve. However, adding the natural condition $TB=B$ to this equation,
\begin{gather}\label{equation:op}
\begin{cases}
    TA+BK= \tilde{A}T, \\
    TB= B,
    \end{cases}
\end{gather}
makes it linear in $(T,K)$ and ensures the existence and uniqueness of a solution, as two controllable systems are always \textit{F-equivalent} (feedback equivalent), which in our case corresponds exactly to equation \eqref{equation:op}.
\begin{thm}[\cite{Brunovsky}]
Let $A, \tilde{A} \in \R^{n\times n}$, $B\in \R^{n\times l}$. If both $(A,B)$ and $(\tilde{A}, B)$ are controllable, then there exists a unique pair $(T,K)\in GL_n(\R)\times \R^{l\times n}$ satisfying equations \eqref{equation:op}.
\end{thm}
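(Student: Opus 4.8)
The plan is to recast \eqref{equation:op} as a feedback-equivalence problem and to reduce both controllable pairs to a common companion (Brunovský) canonical form. First, if $T\in GL_n(\R)$, then multiplying the first equation in \eqref{equation:op} on the right by $T^{-1}$ and using $TB=B$ shows that \eqref{equation:op} is equivalent to $T(A+BK)T^{-1}=\tilde A$ together with $TB=B$; so one must find a feedback $K$ and a change of coordinates $T$ \emph{fixing} $B$ carrying $A+BK$ onto $\tilde A$, i.e.\ realize the F-equivalence of $(A,B)$ and $(\tilde A,B)$ by a transformation that leaves $B$ unchanged.

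\emph{Existence.} Treat first the single control $l=1$, $B=b$. Since $(A,b)$ is controllable there is $P\in GL_n(\R)$ with $PAP^{-1}$ the companion matrix of the characteristic polynomial of $A$ and $Pb=e_n$ (controller canonical form); let $\tilde P$ do the same for $(\tilde A,b)$. As $P(A+bK')P^{-1}$ differs from $PAP^{-1}$ only in the last row (because $PbK'P^{-1}=e_n(K'P^{-1})$), one may pick $K'$ so that $P(A+bK')P^{-1}$ is the companion matrix of the characteristic polynomial of $\tilde A$, i.e.\ $P(A+bK')P^{-1}=\tilde P\tilde A\tilde P^{-1}$. Then $T:=\tilde P^{-1}P$ and $K:=K'$ satisfy $T(A+bK)T^{-1}=\tilde A$, and moreover $T$ is invertible and $Tb=\tilde P^{-1}(Pb)=\tilde P^{-1}e_n=b$, which is \eqref{equation:op}. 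For general $l$ the same strategy applies with the companion matrix replaced by the Brunovský canonical form, the crucial requirement being that $(A,B)$ and $(\tilde A,B)$ reduce to the \emph{same} canonical form.

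\emph{Uniqueness.} View \eqref{equation:op} as an affine system for $(T,K)\in\R^{n\times n}\times\R^{l\times n}$; since source and target both have dimension $n^2+nl$, it suffices to show the linear part $(D,E)\mapsto(DA+BE-\tilde AD,\ DB)$ is injective. Suppose $DB=0$ and $\tilde AD=DA+BE$. Iterating this identity expresses each $DA^kB$ as a combination of $B,\tilde AB,\dots,\tilde A^{k-1}B$ with coefficients formed from $E$; comparing the level $k=n$ with the Cayley--Hamilton relation for $A$ and using that the columns of $B,\tilde AB,\dots,\tilde A^{n-1}B$ span $\R^n$ (controllability of $(\tilde A,B)$) forces $EB=0,\ EAB=0,\dots,\ EA^{n-1}B=0$ in turn, hence $E=0$ (controllability of $(A,B)$) and then $D=0$. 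Thus the linear part is bijective, \eqref{equation:op} has at most one solution, and the pair built above lies in $GL_n(\R)$.

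\emph{Main obstacle.} The single-control case is completely explicit via companion matrices; the genuine work is the multi-input case. Feedback and similarity both leave the controllability (Kronecker) indices invariant, so $(A,B)$ and $(\tilde A,B)$ can be carried to a common Brunovský form — and the coefficient comparison in the uniqueness step can be carried out in the corresponding basis — exactly when their controllability indices coincide; arranging this and handling the attendant block bookkeeping is the crux of the proof.
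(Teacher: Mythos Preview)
Your approach via canonical (companion/Brunovsk\'y) forms is exactly the route the paper indicates in its one-sentence sketch, and for the single-input case $l=1$ your existence and uniqueness arguments are correct and essentially complete.

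For $l>1$, however, the obstacle you flag in the last paragraph is not merely bookkeeping: it is fatal to the statement as written. The controllability indices of $(A,B)$ depend on both $A$ and $B$, and sharing the same $B$ does not force them to coincide. Take $n=4$, $l=2$, $B=[e_1\ e_2]$, define $A$ by $Ae_1=e_3$, $Ae_3=e_4$, $Ae_2=Ae_4=0$ (indices $(3,1)$) and $\tilde A$ by $\tilde Ae_1=e_3$, $\tilde Ae_2=e_4$, $\tilde Ae_3=\tilde Ae_4=0$ (indices $(2,2)$). Both pairs are controllable, yet the $(4,2)$ entry of $TA+BK=\tilde AT$, combined with $Te_2=e_2$ from $TB=B$, reads $0=1$: there is no solution at all, invertible or not. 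So for general $l$ the theorem needs the extra hypothesis that the controllability indices of $(A,B)$ and $(\tilde A,B)$ agree; once that is assumed, your Brunovsk\'y-form argument closes exactly as you outline.

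This does not affect the paper's use of the result: there $\tilde A=A-\lambda I$, and since each $(A-\lambda I)^jB$ is a polynomial of degree $j$ in $A$ applied to $B$, the spaces $\mathrm{span}\{B,AB,\dots,A^kB\}$ and $\mathrm{span}\{B,(A-\lambda I)B,\dots,(A-\lambda I)^kB\}$ coincide for every $k$, the indices automatically match, and the argument goes through.
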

The proof of this result relies on the fact that in finite dimension, controllable systems can always be written in control canonical form. Hence, writing \eqref{FirstOpEq} using the canonical form for the pairs $(A,B)$ and $(\tilde{A},B)$ (assuming both are controllable) yields naturally the first equation of \eqref{equation:op} and $TB=B$ (see \cite{coron:hal-03161523} for more details).

We call the system \eqref{equation:op} the \textit{backstepping equations}, and its first equality the \textit{operator equality}. Exponential stabilization is a direct consequence of this equality. Indeed, $x_2$ decays exponentially with rate $\lambda$ and, since $T$ is an invertible operator,  so does $x_1$:
\begin{equation}
\begin{aligned}
    \|x_1(t)\|&=\|T^{-1} x_2(t) \| \\
    &\leq \vertiii{T^{-1}} \|x_2(t)\| \\ 
    &\leq e^{-\lambda t} \vertiii{T^{-1}}  \|x_2(0)\| \\ 
    &\leq e^{-\lambda t}  \vertiii{T^{-1}}\vertiii{T} \|x_1(0)\|.
    \end{aligned}
\end{equation}

Rapid stabilization via backstepping therefore reduces to the existence
of solutions $(T, K)$ to the backstepping equations \eqref{equation:op}.

 In infinite dimension, the situation is more complex, but the same philosophy applies. The role of controllability in finding such backstepping transformations has been established for several important PDE models such as the transport equation \cite{zhang:hal-01905098}, the KdV equation \cite{CoronLu14}, Kuramoto-Sivashinsky equation \cite{CoronLu15}, the linearized Schrödinger equation \cite{CGM}, the linearized Saint-Venant equation \cite{coron:hal-03161523}. We highlight that the uniqueness equation $TB=B$ was decisive in \cite{CGM, coron:hal-03161523, zhang:hal-01905098} to transform non-local terms emanating from distributed control functions into local terms for the operator equality, but was fundamentally used in an implicit way in \cite{CoronLu14, CoronLu15} for boundary controls. 
 
 The general case remains wide open, and is considerably more involved. Indeed, it is not reasonable for instance to expect that two PDEs with wildly differing physical properties should be related to one another by a backstepping transformation. 
 \begin{op}
Let  $A, \tilde{A}$ and $B$ be (unbounded) operators. Is there a necessary and sufficient condition on $A,\tilde{A},B$ to guarantee the existence and the  uniqueness of the solution $(T, K)$ to the operator equality \eqref{equation:op}? 
\end{op}

An important common feature of the infinite-dimensional results mentioned above is that the eigenvalues of the operator $A$ are simple and isolated. This plays a role in the controllability property (through the moments method), which is crucial to implement the backstepping method in the references above. In our case, as we have pointed out above, all non-zero eigenvalues have multiplicity greater than $1$ (this will be specified rigorously in the next section), a feature which occurs in several PDE models, such as the water waves system. As a consequence, the system is not controllable with a single internal control.  This phenomenon appears quite often when working on compact Riemannian manifolds, for example Schrödinger equations on torus. To tackle this difficulty, we consider backstepping with two internal scalar controls.

Backstepping with two scalar controls has already been implemented in \cite{KdVKdV} for a coupled KdV-KdV system with two boundary controls,  for which, because of the coupling nature, it is natural to introduce two control terms.
More recently, it has been improved in \cite{Cerpa-KdVKdV}, where the authors showed that only one control is sufficient to stabilize the system.

 \subsubsection{Sharp functional setting for the Laplacian}

Another important contribution of our paper is to present a sharp functional setting, with respect to the state space and control space, for the application of the backstepping method with a Fredholm transform in the case of the Laplacian with periodic boundary conditions. In particular, we deduce the sharp spaces $H^s(\T)$ for which the Riesz basis exists, which is crucial for the application of the backstepping method with a Fredholm transformation. We hope this precise framework could extend the knowledge on backstepping method using Fredholm type transformation, for example on the use of nonlinear systems,  and on other important models. For instance, Proposition \ref{thm-ope-t}, Corollary \ref{cor-key} and Lemma \ref{lem-rbheat-1}--\ref{lem-rbheat-2} can be similarly proved for Schrödinger equations on $\T$, which somehow extends the analysis of \cite{CGM}. It is  interesting to further investigate whether these new observation could be applied to the bi-linear Schrödinger equations. This analysis could also be applied to evolution equations with fractional Laplacian $(-\Delta)^{\alpha}$, at least for $\alpha$ strictly larger than  $3/4$, where similar results (at least partially, depending on the value of $\alpha$) to Lemma \ref{lem-rbheat-1}  can be proved.\\ 
Indeed, it seems that the critical growth of the eigenvalues for the existence of a quadratically close Riesz basis is $|\lambda_n| \sim n^{3/2+\epsilon}$ (i.e. $\alpha >3/4$ is the case of the fractional Laplacian), meaning that for a growth of order $|\lambda_n| \sim n^{s}, 1\leq s \leq 3/2$ does not seem enough to prove that the family is quadratically close to the eigenbasis. An interesting open problem is therefore to apply the backstepping method with a Fredholm type transformation for spatial operator with eigenvalues with growth $|\lambda_n| \sim n^{s}, 1< s \leq 3/2$, the first-order equation being excluded due to the positive answer \cite{coron:hal-03161523, zhang:hal-01905098} and its link to other transformation such as  the Hilbert transform. However the case $1< s \leq 3/2$ remains and this analysis may give some inspiration on the study of fractional Laplacian with the value of $\alpha$ lower than this threshold.

Moreover, we  also highlight that the framework investigated here is closely related to the one found for the linear Schr\"odinger equation in \cite{CGM}. A major distinction between \cite{CGM} and the present article is that the well-posedness for the closed-loop system here relies on the dissipation properties of the heat equation. Hence, there is no need to satisfy the operator equality $T(A+BK)=(A-\lambda I)T$ for functions in $D(A+BK)$ (see Remark  \ref{rmk:regularityoperatorequality}).  \\

\subsubsection{Cost of stabilization}\label{subsubsection:coststab}

Finally, as we can see from Theorem \ref{thm-main-linear} (more precisely, from Section \ref{sec:proof-thms}), it is totally a new observation that the same feedback law stabilizes the system in $H^{m+r}$ with some cost $C_r(\lambda, m)$ depending on $r\in (-1/2, 1/2)$ and $\lambda\notin \mathcal{N}$. It is an important but challenging problem to get any quantitative description of this constant $C_r(\lambda, m)$ in view of possible applications. Such a description would also be interesting for finite time stabilization problems, namely $Ce^{C\lambda^{\beta}}$ type estimates. So far such estimates have been achieved via different methods, relying on direct energy estimates \cite{2015-Coron-Nguyen-preprint}, Bessel functions \cite{MR3916517}, iterative methods \cite{XiangKdVNull}, and spectral inequalities \cite{Xiang-heat-2020, Xiang-NS-2020}.  However, so far such important property  has not yet been discovered for Fredholm type backstepping methods. Armed with the precise description introduced in this paper, we believe that we are closer to an answer.

\subsection{Structure of the paper} 
This paper is organized as follows: in Section \ref{sec:functionalsetting} we define the functional setting, in Section \ref{sec:noncontrol} we show that the system \eqref{eq:linearizedWW_single} with a single control is neither controllable nor stabilizable, in Section \ref{sec:strategyandoutline} we present the double backstepping approach. In Sections \ref{sec:proof-prop} and \ref{sec:well-pos} we prove the main propositions and lemma on which the backstepping method relies. Finally in Section \ref{sec:proof-thms} we prove Theorem \ref{thm-main-linear}--\ref{thm-main-burgers}.\\

\section{Functional setting}
\label{sec:functionalsetting}
\subsection{Function spaces}
We start by recalling some results on the eigenvectors and eigenvalues of the Laplacian on the torus. Observe that the classical Fourier series $\{e^{in x}\}_{n\in \Z}$ in $\T$ are eigenfunctions of the Laplacian operator $\Delta$ associated to the eigenvalues
$\lambda_{n}:=-n^{2}$, and form an orthonormal basis of $L^{2}(\mathbb{T})$. Note that thanks to the fact that we are working on $\mathbb{T}$ without boundary, the Sobolev space $H^s$ coincides with the span of $\{n^s e^{inx}\}$. Note also that 
\begin{equation*}
    \lambda_{n}=\lambda_{-n}=-n^{2},
\end{equation*}
and therefore the nonzero eigenvalues have multiplicity 2. We therefore easily deduce the existence of an orthonormal basis of real-valued eigenfunctions for the Laplacian: 
\begin{equation}
    \begin{split}
    f_n^{1}:= \frac1{\sqrt{\pi}}\sin nx,\; \; f_n^{2}:= \frac1{\sqrt{\pi}} \cos nx, & \textrm{ associated to } \lambda_n:= -n^2, \; \forall n\in \N^*,\label{def:eigenvec} \\
    {\color{black}f^2_0}:= \frac1{\sqrt{2\pi}}, & \textrm{associated to}  \lambda_0:= 0.
    \end{split}
\end{equation}
Except in Section \ref{sec:noncontrol} where using the Fourier series $\{e^{in x}\}_{k\in \Z}$ is convenient, we will use this basis, and work with real-valued functions, i.e. $L^2(0,2\pi; \R)$.
We further define 
\begin{gather}
    L^2_1:= \textrm{span}_\R \{\sin nx\}_{n\in \N^*}, \textrm{ describing the odd functions}, \\ 
    L^2_2:= \textrm{span}_\R \{\cos nx\}_{n\in \N},  \textrm{ describing the even functions}, \\
    \textrm{$L^2_i$ is a subspace of $L^2$ that is endowed with the same norm,} \\
    L^2(\T)= L_1^2\oplus L_2^2.
\end{gather}
Similarly, concerning Sobolev space s in $\T$, recall that one has, {\color{black}\begin{equation}
     H^{m}\textcolor{black}{(\T)} \textcolor{black}{=} \left\{f =\sum\limits_{n\in\mathbb{N}^{*}} a^1_{n} f_{n}^{1}+ \sum\limits_{n\in\mathbb{N}} a^2_{n} f_{n}^{2} \; \; \right| \left. \; a_n^i\in \R \textrm{ and } \sum\limits_{n\in\mathbb{N}^{*}} n^{2m}\left((a^1_{n})^2+ (a^2_n)^2\right)< +\infty
    \right\},
\end{equation}
with the (inhomogeneous) Sobolev norm
\begin{equation}
    \|f\|_{H^m}^2:= (a^2_0)^2+ \sum\limits_{n\in\mathbb{N}^{*}} n^{2m}\left((a^1_{n})^2+ (a^2_n)^2\right).
\end{equation}

In this paper \textcolor{black}{we say that the function $y$ belongs} to $H^{m-}$, and \textcolor{black}{that the functional} $\mathcal{L}: H^{m+}\rightarrow \R$ \textcolor{black}{is} bounded, \textcolor{black}{when}
\begin{gather}
    y\in H^{m-\varepsilon} \; \textrm{ for all } \varepsilon>0, \label{def-H-}\\
    \mathcal{L}: H^{m+\varepsilon}\rightarrow \R \textrm{ is bounded for all } \varepsilon>0. \label{def-H+}
\end{gather}
}

We also define the sub-spaces $H^{m}_{i}$, $i\in\{1,2\}$ as follows, for $m\in\mathbb{R}$
\begin{gather}
    H^{m}_{1} :=\{a\in H^{m}(\T)\;|\;a =\sum\limits_{n\in\mathbb{N}^{*}} a_{n} f_{n}^{1}
    \}, \\
    H^{m}_{2} :=\{a\in H^{m}(\T)\;|\;a =\sum\limits_{n\in\mathbb{N}} a_{n} f_{n}^{2}
    \}, \\
     H^m\textcolor{black}{(\T)}= H_1^m\oplus H_2^m.
\end{gather}
{\color{black} Notice that 
\begin{equation}
    \Delta: H^{m}_k\rightarrow H^{m-2}_k.
\end{equation}
We remark here that for $f\in H^{m+1}_1, g\in H^{m-1}_1$, the inner product $\langle\cdot, \cdot \rangle_{H^m_1}$ is well-defined and is given by 
\begin{equation*}
      \langle f, g\rangle_{H^m_1}= \sum_{n\in \N^*} (n^m f_n) (n^m g_n)= \sum_{n\in \N^*} (n^{m+1} f_n) (n^{m-1} g_n),
\end{equation*}
which, inspired by the last term of the preceding formula, can be also denoted as $\langle\cdot, \cdot \rangle_{H^{m+1}_1, H^{m-1}_1}$.

In order to describe the precise definition domain of the operator $T$, \textcolor{black}{we recall that for} the Schwartz type space $\mathcal{S}\textcolor{black}{(\T)}$ satisfying fast decay at high frequency, \textcolor{black}{one has} \begin{gather}
     \mathcal{S}(\T) =\bigg\{f =\sum\limits_{n\in\mathbb{N}^{*}} a^1_{n} f_{n}^{1}+ \sum\limits_{n\in\mathbb{N}} a^2_{n} f_{n}^{2} \;\;  \bigg| \; \; \forall m\in \N, \forall \varepsilon>0, \; \exists M \textrm{ such that } \\  \qquad \qquad \qquad \qquad   n^{2m}\left((a^1_{n})^2+ (a^2_n)^2\right)< \varepsilon, \; \; \forall n> M
    \bigg\}.
\end{gather} }
We also  define the decomposition of in odd and even functions as follows
\begin{gather} 
    \mathcal{S}_{1}:= \big\{a\in \mathcal{S} \; \big|  \; \langle a,f_{n}^{2}\rangle =0,\; \forall n\in\mathbb{N}\big\},\\
     \mathcal{S}_{2}:= \big\{a\in \mathcal{S} \; | \; \langle a,f_{n}^{1}\rangle =0,\; \forall n\in\mathbb{N}^{*}\big\}.
\end{gather}
{\color{black} While, by denoting  the space $ \mathcal{S}'$ as the dual of $ \mathcal{S}$, we also define 
\begin{gather} 
    \mathcal{S}_{1}':= \big\{a\in \mathcal{S}' \; | \; \langle a,f_{n}^{2}\rangle =0,\; \forall n\in\mathbb{N}\big\},\\
     \mathcal{S}_{2}':= \big\{a\in \mathcal{S}' \; | \; \langle a,f_{n}^{1}\rangle =0,\; \forall n\in\mathbb{N}^{*}\big\},
\end{gather}
which strictly speaking is not the dual of $\mathcal{S}_k$, but the quotient space  $\mathcal{S}'/\mathcal{S}_{k+1}$.
We  easy observe that, 
\begin{align}
    \mathcal{S} & \subset H^s\subset  \mathcal{S}', \; \forall s\in \R, \\
    \mathcal{S}_k & \subset H^s_k\subset  \mathcal{S}_k', \; \forall s\in \R, \; \forall k\in \{1, 2\}.
\end{align}
}

\subsection{Riesz bases}\label{sec:Rieszdef}
Finally we recall here the definition of a Riesz basis, see for instance the book \cite{christensen2003introduction}, or a monograph on the moment theory \cite{moment-book} (as well as the references cited in \cite{CGM}).
{\color{black}
\begin{defn}[Vector family]\label{def-riesz-bas}
Let $X$ be a Hilbert space. A family of vectors $\{\xi_n\}_{n\in \textcolor{black}{\mathcal{I}}}$, \textcolor{black}{where $\mathcal{I}=\Z$, $\N$, or $\N^*$} is said to be
\begin{itemize}
    \item[(1)] \textbf{Minimal} in X, if for every $k\in \textcolor{black}{\mathcal{I}}$, $\xi_k\notin \overline{span\{\xi_i; i\in \textcolor{black}{\mathcal{I}}-\{k\}\}}$.  
    \item[(2)]  \textbf{Dense} in X, if  $ \overline{span\{\xi_i; i\in \textcolor{black}{\mathcal{I}}\}}= X$. 
     \item[(3)]  \textbf{$\omega$-independent} in X, if  
     \begin{equation}
       \sum_{k\in \textcolor{black}{\mathcal{I}}} c_k \xi_k=0 \textrm{ in $X$ with } \{c_n\}_{n\in\textcolor{black}{\mathcal{I}}}\in\ell^2(\mathcal{I}) \Longrightarrow   c_n=0,\, \forall n\in \textcolor{black}{\mathcal{I}}.
            \end{equation}
     \item[(4)]  \textbf{Quadratically close} to a family of vector $\{e_n\}_{n\in \textcolor{black}{\mathcal{I}}}$, if
     \begin{equation}
         \sum_{k\in \textcolor{black}{\mathcal{I}}} \|\xi_k- e_k\|_X^2<+\infty.
     \end{equation}
      \item[(5)]  \textbf{Riesz basis} of $X$, if it is the image of an isomorphism (on $X$) of some orthonormal basis.
       \item[(5)']  \textbf{Riesz basis} of $X$ (an equivalent definition of (5)), if it is  dense in $X$ and if there exist $C_1, C_2>0$ such that for any $\{a_n\}_{n\in \textcolor{black}{\mathcal{I}}}\in\ell^2(\mathcal{I})$ we have 
       \begin{equation}\label{ineq-riesz}
           C_1\sum_{k\in \textcolor{black}{\mathcal{I}}} |a_k|^2\leq  \|\sum_{k\in \textcolor{black}{\mathcal{I}}} a_k \xi_k\|_{X}^2\leq  C_2\sum_{k\in \textcolor{black}{\mathcal{I}}} |a_k|^2.
       \end{equation}
\end{itemize}
\end{defn}
\textcolor{black}{These definitions allow us to give the} following criteria for a Riesz basis \textcolor{black}{which} will be used later on in Section \ref{sec-riesz-pro}.
\begin{lem}\label{lem-cri-riesz}
Let $\{\xi_n\}_{n\in\textcolor{black}{\mathcal{I}}}$ be quadratically close to an orthonormal basis $\{e_n\}_{n\in\textcolor{black}{\mathcal{I}}}$. Suppose that  $\{\xi_n\}_{n\in\textcolor{black}{\mathcal{I}}}$ is either dense in $X$ or $\omega$-independent in $X$, then $\{\xi_n\}_{n\in\textcolor{black}{\mathcal{I}}}$ is a Riesz basis of $X$.
\end{lem}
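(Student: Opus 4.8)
The plan is to deduce Lemma~\ref{lem-cri-riesz} from the classical Bari-type theorem, which says that a family quadratically close to an orthonormal basis differs from it by a Hilbert--Schmidt operator, so that the associated operator is Fredholm of index zero; one then only needs injectivity \emph{or} surjectivity to conclude it is an isomorphism, which is precisely what $\omega$-independence or density provides. Concretely, let $\{e_n\}_{n\in\mathcal{I}}$ be the given orthonormal basis of $X$ and define the bounded linear map $U\colon X\to X$ on basis elements by $U e_n := \xi_n$, extended by linearity and continuity; this is well-defined and bounded because, writing $\xi_n = e_n + \eta_n$ with $\sum_n\|\eta_n\|_X^2 =: M <\infty$, the ``perturbation'' operator $R$ defined by $R e_n := \eta_n$ satisfies, for any finite sum $x=\sum a_n e_n$,
\begin{equation*}
    \Big\| \sum_n a_n \eta_n \Big\|_X \leq \sum_n |a_n|\,\|\eta_n\|_X \leq \Big(\sum_n |a_n|^2\Big)^{1/2}\Big(\sum_n \|\eta_n\|_X^2\Big)^{1/2} = \sqrt{M}\,\|x\|_X,
\end{equation*}
so $R$ extends to a bounded operator on $X$ (in fact Hilbert--Schmidt, hence compact), and $U = \id + R$.

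Next I would record the two structural consequences of $U=\id+R$ with $R$ compact. First, $U$ is a Fredholm operator of index $0$: this is the standard analytic Fredholm / Riesz--Schauder theory for compact perturbations of the identity. Second, $\dim\ker U = \dim\operatorname{coker} U$, so $U$ is an isomorphism \emph{as soon as} it is either injective or surjective. It then remains to translate the two alternative hypotheses of the lemma into these two properties of $U$. If $\{\xi_n\}$ is $\omega$-independent: suppose $Ux=0$; expanding $x=\sum_n a_n e_n$ with $\{a_n\}\in\ell^2$, we get $\sum_n a_n\xi_n = Ux = 0$ in $X$ with $\ell^2$ coefficients, hence $a_n=0$ for all $n$ by $\omega$-independence, i.e. $x=0$; thus $U$ is injective, hence an isomorphism, hence $\{\xi_n\}=\{Ue_n\}$ is the image of an orthonormal basis under an isomorphism, which is a Riesz basis by Definition~\ref{def-riesz-bas}(5). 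If instead $\{\xi_n\}$ is dense in $X$: then $\operatorname{Ran}U \supseteq \overline{\operatorname{span}\{\xi_n\}} = X$ up to closure — more carefully, $\operatorname{Ran}U$ contains every finite linear combination of the $\xi_n$ and hence is dense; but $\operatorname{Ran}U$ is also closed, being of finite codimension (as $U$ is Fredholm), so $\operatorname{Ran}U = X$, i.e. $U$ is surjective, hence an isomorphism, and we conclude as before.

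The only genuine subtlety — and the step I would be most careful about — is the claim that $\operatorname{Ran}U$ is closed, since density alone never implies surjectivity; here it is exactly Fredholmness (finite codimension of the range forces closedness) that saves the argument, which is why invoking Riesz--Schauder theory for $\id + (\text{compact})$ is essential rather than cosmetic. One should also check the harmless point that a dense family that is quadratically close to an orthonormal basis cannot have, say, $\operatorname{Ran}U$ dense but not all of $X$; the codimension bound rules this out. Everything else is routine: the Cauchy--Schwarz estimate above for boundedness of $R$, the observation that $R$ is Hilbert--Schmidt (so a fortiori compact) because $\sum_n\|Re_n\|^2<\infty$, and the unwinding of Definition~\ref{def-riesz-bas}(5). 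I would present the proof in roughly this order: (i) build $U=\id+R$ and show $R$ compact; (ii) invoke index-zero Fredholmness; (iii) case $\omega$-independent $\Rightarrow$ injective; (iv) case dense $\Rightarrow$ surjective via closed range; (v) conclude via Definition~\ref{def-riesz-bas}(5).
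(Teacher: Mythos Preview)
Your proof is correct and is the standard Bari-type argument: build $U=\id+R$ with $R$ Hilbert--Schmidt (hence compact), use Riesz--Schauder to get Fredholmness of index zero, and then turn $\omega$-independence into injectivity and density into surjectivity via closedness of the range. All the steps check out, including the point you flag about closed range.

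The paper itself does not actually give a proof of Lemma~\ref{lem-cri-riesz}; it states the result as a criterion and later (at the end of the proof of Lemma~\ref{lem-rbheat-1}(1)) attributes it to \cite[Theorem~3.2 and Theorem~3.3]{CGM}. So there is no ``paper's own proof'' to compare against beyond that reference. Your argument is exactly the classical one underlying those cited theorems, and in particular it makes explicit the Fredholm mechanism that the paper uses implicitly again in Section~\ref{sec-ine-kn-pro} when showing $T=\id+\tilde T$ with $\tilde T$ compact.
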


 It is noteworthy that under the assumption that $\{\xi_n\}_{n\in \mathcal{I}}$ is quadratically close to some orthonormal basis $\{e_n\}_{n\in \mathcal{I}}$,  if for some coefficients $\{c_n\}_{n\in \mathcal{I}}\in \R^{\mathcal{I}}$ (or $\C^{\mathcal{I}}$)  the following series converges in the Cauchy sense 
 \begin{equation}
     \sum_{k\in \mathcal{I}} c_k \xi_k  \textrm{ converges in } X ,
 \end{equation} which in particular contains the case that it converges to 0, then automatically we know that  the coefficients $\{c_n\}_{n\in \mathcal{I}}$ belong to  $l^2(\mathcal{I})$.  Indeed, we know  from the fact that the series converges in $X$ that, for every $N$,
 \begin{equation}
     \sum_{k>|N|, k\in \mathcal{I}} c_k \xi_k  \textrm{ converges in } X,
 \end{equation}
 which means that the $X$ norm of the preceding series is finite:
 \begin{equation}
      \left\|\sum_{k>|N|, k\in \mathcal{I}} c_k \xi_k\right\|_X<+\infty.
 \end{equation}
 
 Since  $\{\xi_n\}_{n\in \mathcal{I}}$ is quadratically close to $\{e_n\}_{n\in \mathcal{I}}$, for some $N$ sufficiently large we have
 \begin{equation}
     \sum_{k>|N|, k\in \mathcal{I}} \|(\xi_k- e_k)\|_X^2<\frac{1}{4},
 \end{equation}
 which, to be combined with the fact that $\{e_n\}_{n\in \mathcal{I}}$ is an orthonormal basis of $X$, yield
 \begin{align*}
     \left\|\sum_{k>|N|, k\in \mathcal{I}} c_k \xi_k\right\|_X&= \left\|\sum_{k>|N|, k\in \mathcal{I}} c_k e_k+ \sum_{k>|N|, k\in \mathcal{I}} c_k (\xi_k- e_k)\right\|_X, \\
     &\geq \left\|\sum_{k>|N|, k\in \mathcal{I}} c_k e_k\right\|_X- \left\| \sum_{k>|N|, k\in \mathcal{I}} c_k (\xi_k- e_k)\right\|_X\\
     & \geq \left(\sum_{k>|N|, k\in \mathcal{I}} (c_k)^2\right)^{1/2}- \left(\frac{1}{4}\sum_{k>|N|, k\in \mathcal{I}} (c_k)^2\right)^{1/2}\\
      & =\frac{1}{2} \left(\sum_{k>|N|, k\in \mathcal{I}} (c_k)^2\right)^{1/2}.
 \end{align*}
 Therefore, $\{c_n\}_{n\in \mathcal{I}}\in\ell^2(\mathcal{I})$.

We also have the following Lemma that will be useful in the Section \ref{sec-riesz-pro}.
\begin{lem}\label{lem-iso-riesz-equi}
Let $X$, $Y$ be Hilbert spaces. Let $T:X\rightarrow Y$ be an isomorphism. Suppose that $\{\xi_n\}_{n\in \textcolor{black}{\mathcal{I}}}$ is a Riesz basis of $X$, then with $\zeta_n:= T\xi_n$, \textcolor{black}{the family} $\{\zeta_n\}_{n\in \textcolor{black}{\mathcal{I}}}$ is a Riesz basis of $Y$.
\end{lem}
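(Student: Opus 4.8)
The plan is to argue directly from Definition \ref{def-riesz-bas}(5), which characterizes a Riesz basis as the image of an orthonormal basis under an isomorphism. Since $\{\xi_n\}_{n\in\mathcal{I}}$ is a Riesz basis of $X$, there exist an orthonormal basis $\{e_n\}_{n\in\mathcal{I}}$ of $X$ and an isomorphism $S\colon X\to X$ with $\xi_n=Se_n$ for every $n\in\mathcal{I}$. Consequently $\zeta_n=T\xi_n=TSe_n$ for all $n\in\mathcal{I}$, and $TS\colon X\to Y$ is an isomorphism as a composition of isomorphisms; it remains only to repackage this as ``an isomorphism on $Y$ applied to an orthonormal basis of $Y$''.

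To do so, I would first produce an orthonormal basis of $Y$ indexed by the same set $\mathcal{I}$. Because $X$ carries the orthonormal basis $\{e_n\}_{n\in\mathcal{I}}$ and $T\colon X\to Y$ is an isomorphism, $X$ and $Y$ have the same Hilbert dimension, so $Y$ admits an orthonormal basis $\{g_n\}_{n\in\mathcal{I}}$. Let $U\colon X\to Y$ be the unitary determined by $Ue_n=g_n$ (defined on finite linear combinations and extended by density). Then set $V:=TSU^{-1}\colon Y\to Y$. As a composition of isomorphisms, $V$ is an isomorphism on $Y$, and $Vg_n=TSU^{-1}g_n=TSe_n=T\xi_n=\zeta_n$ for every $n\in\mathcal{I}$. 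Hence $\{\zeta_n\}_{n\in\mathcal{I}}$ is the image of the orthonormal basis $\{g_n\}_{n\in\mathcal{I}}$ of $Y$ under the isomorphism $V$, so it is a Riesz basis of $Y$ by Definition \ref{def-riesz-bas}(5).

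There is no genuine obstacle in this argument; the only mild point is the existence of an orthonormal basis of $Y$ indexed by $\mathcal{I}$, which reduces to the invariance of Hilbert dimension under an isomorphism. Alternatively, one can bypass this and argue via the equivalent characterization (5$'$): density of $\{\zeta_n\}_{n\in\mathcal{I}}$ in $Y$ follows from surjectivity and continuity of $T$, since $\overline{\mathrm{span}}\{\zeta_n\}=\overline{T(\mathrm{span}\{\xi_n\})}=T\bigl(\overline{\mathrm{span}}\{\xi_n\}\bigr)=T(X)=Y$; and for any $\{a_n\}_{n\in\mathcal{I}}\in\ell^2(\mathcal{I})$ the series $\sum_{k}a_k\zeta_k$ converges in $Y$ if and only if $\sum_k a_k\xi_k$ converges in $X$ (as $T$ is a homeomorphism), in which case $\|\sum_k a_k\zeta_k\|_Y=\|T\sum_k a_k\xi_k\|_Y$, and combining the frame inequality \eqref{ineq-riesz} for $\{\xi_n\}_{n\in\mathcal{I}}$ with $\vertiii{T^{-1}}^{-1}\|x\|_X\le\|Tx\|_Y\le\vertiii{T}\,\|x\|_X$ yields \eqref{ineq-riesz} for $\{\zeta_n\}_{n\in\mathcal{I}}$ with constants $C_1\vertiii{T^{-1}}^{-2}$ and $C_2\vertiii{T}^{2}$.
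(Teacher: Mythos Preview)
Your proposal is correct. Your alternative route via (5)$'$ is essentially the paper's own proof: the paper establishes the frame inequality \eqref{ineq-riesz} by sandwiching $\|\sum a_k\zeta_k\|_Y$ between $\vertiii{T^{-1}}^{-1}\|\sum a_k\xi_k\|_X$ and $\vertiii{T}\,\|\sum a_k\xi_k\|_X$, and proves density by an $\varepsilon$-argument equivalent to your closure identity. Your primary route via Definition~\ref{def-riesz-bas}(5), introducing a unitary $U:X\to Y$ so as to factor $TS=VU$ with $V\in GL(Y)$, is a mild (and valid) elaboration that the paper does not carry out---the paper only invokes~(5) in the trivial case $X=Y$ and switches to~(5)$'$ otherwise---but it buys nothing extra here beyond aesthetic adherence to the ``isomorphism on $Y$'' formulation.
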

\begin{proof}
 If $X=Y$ we can directly use Definition \ref{def-riesz-bas} (5) to show that it is a Riesz basis \textcolor{black}{as the image of an orthonormal basis by an isomorphism on $X$}. Otherwise,  a quite straightforward proof is given according to Definition \ref{def-riesz-bas} (5)'. 
 
 We first show the inequality \textcolor{black}{\eqref{ineq-riesz}}. Thanks to the fact that $T$ is an isomorphism and that $\{\xi_n\}_{\Z}$ is a Riesz basis of $X$, \textcolor{black}{there exist constants $C$, $C_{1}$ and $C_{2}$ such that}
 \begin{equation*}
    \left\|\sum_{k\in \textcolor{black}{\mathcal{I}}} a_k \zeta_k\right\|_{Y}^2=  \left\|T \sum_{k\in \textcolor{black}{\mathcal{I}}} a_k \xi_k\right\|_{Y}^2\leq C\left\| \sum_{k\in \textcolor{black}{\mathcal{I}}} a_k \xi_k\right\|_{X}^2\leq  C C_2\sum_{k\in \textcolor{black}{\mathcal{I}}} |a_k|^2,
 \end{equation*}
 and
 \begin{equation*}
    \left\|\sum_{k\in \textcolor{black}{\mathcal{I}}} a_k \zeta_k\right\|_{Y}^2=  \left\|T \sum_{k\in \textcolor{black}{\mathcal{I}}} a_k \xi_k\right\|_{Y}^2\geq 
    \textcolor{black}{C^{-1}}\left\| \sum_{k\in \textcolor{black}{\mathcal{I}}} a_k \xi_k\right\|_{X}^2\geq  \textcolor{black}{C^{-1}} \textcolor{black}{C_{1}}\sum_{k\in \textcolor{black}{\mathcal{I}}} |a_k|^2.
 \end{equation*}
 Next, we show that $\{\zeta_n\}_{\textcolor{black}{\mathcal{I}}}$ is dense in $Y$. For any $\zeta\in Y$, since $T^{-1}\zeta=: \xi\in X$, for any $\varepsilon>0$ there exist a finite combination such that 
 \begin{equation}
     \left\|\sum_{|k|\leq N} a_k \xi_k- \xi\right\|_X< \varepsilon.
 \end{equation}
 Thus 
 \begin{align*}
     \left\|\zeta- \sum_{|k|\leq N} a_k \zeta_k \right\|_Y&=  \left\|T\xi- T\sum_{|k|\leq N} a_k \xi_k \right\|_Y \\
     &\leq C   \left\|\zeta- \sum_{|k|\leq N} a_k \xi_k \right\|_X \\
     &\leq C \varepsilon,
 \end{align*}
 which concludes the proof of the lemma.
\end{proof}

\begin{lem}[Proposition 19 of \cite{Beauchard-Laurent}]
Let $X$ be a Hilbert space. Suppose that $\{\xi_n\}_{n\in \mathcal{I}}$ is a Riesz basis of $X$, then its bi-orthogonal sequence $\{\xi_n'\}_{n\in \mathcal{I}}$ is also  a Riesz basis of $X$, where  bi-orthogonal means,
\begin{equation}
    \langle \xi_n, \xi_m'\rangle_{X}= \delta_{n, m}, \; \forall n, m\in \mathcal{I}.
\end{equation}
For any $f\in X$, there exists a unique sequence $\{a_k\}_{k\in \mathcal{I}}\in\ell^2(\mathcal{I})$ such that 
\begin{equation}
    f= \sum_{k\in \mathcal{I}} a_k \xi_k \textrm{ in } X,
\end{equation}
where the series converges in $X$ under Cauchy sequence sense.  Moreover, 
\begin{gather}
    a_k:=  \langle f, \xi_k'\rangle_{X},\\
   C_1 \sum_{k\in \mathcal{I}} |a_k|^2\leq  \|f\|_X^2\leq C_2   \sum_{k\in \mathcal{I}} |a_k|^2.
\end{gather}
\end{lem}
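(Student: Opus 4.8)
The plan is to use the characterization of a Riesz basis as the image of an orthonormal basis under an isomorphism (Definition \ref{def-riesz-bas}(5)), and to exhibit the biorthogonal family explicitly. Let $\{e_n\}_{n\in\mathcal{I}}$ be an orthonormal basis of $X$ and $U:X\to X$ a bounded invertible operator with $\xi_n=Ue_n$ for all $n\in\mathcal{I}$. Since $U$ is invertible so is its adjoint, with $(U^*)^{-1}=(U^{-1})^*$. I claim that $\xi_n':=(U^*)^{-1}e_n$ is the (unique) biorthogonal family: indeed $\langle \xi_n,\xi_m'\rangle_X=\langle Ue_n,(U^*)^{-1}e_m\rangle_X=\langle e_n,U^*(U^*)^{-1}e_m\rangle_X=\langle e_n,e_m\rangle_X=\delta_{n,m}$. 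Uniqueness of the biorthogonal family follows from the completeness of $\{\xi_n\}_{n\in\mathcal{I}}$ (it is dense in $X$, being a Riesz basis): if two families are biorthogonal to $\{\xi_n\}_{n\in\mathcal{I}}$, their difference is orthogonal to every $\xi_n$, hence vanishes. Finally, since $(U^*)^{-1}$ is again an isomorphism of $X$ and $\{\xi_n'\}_{n\in\mathcal{I}}=(U^*)^{-1}\{e_n\}_{n\in\mathcal{I}}$, Definition \ref{def-riesz-bas}(5) (equivalently, Lemma \ref{lem-iso-riesz-equi} applied with $T=(U^*)^{-1}$) shows that $\{\xi_n'\}_{n\in\mathcal{I}}$ is a Riesz basis of $X$.

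For the expansion, given $f\in X$ set $g:=U^{-1}f\in X$. Since $\{e_n\}_{n\in\mathcal{I}}$ is an orthonormal basis, $g=\sum_{n\in\mathcal{I}}\langle g,e_n\rangle_X\,e_n$ with Cauchy convergence in $X$ and $\sum_{n\in\mathcal{I}}|\langle g,e_n\rangle_X|^2=\|g\|_X^2$. Applying the bounded operator $U$ and passing it through the Cauchy-convergent series by continuity, $f=Ug=\sum_{n\in\mathcal{I}}\langle g,e_n\rangle_X\,\xi_n$ in $X$. Setting $a_n:=\langle g,e_n\rangle_X=\langle U^{-1}f,e_n\rangle_X=\langle f,(U^*)^{-1}e_n\rangle_X=\langle f,\xi_n'\rangle_X$ yields the announced coefficient formula, and $\{a_n\}_{n\in\mathcal{I}}\in\ell^2(\mathcal{I})$. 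For the norm equivalence, $U$ being an isomorphism gives $\vertiii{U^{-1}}^{-1}\|g\|_X\le\|Ug\|_X\le\vertiii{U}\,\|g\|_X$, that is $\vertiii{U^{-1}}^{-2}\sum_{n\in\mathcal{I}}|a_n|^2\le\|f\|_X^2\le\vertiii{U}^2\sum_{n\in\mathcal{I}}|a_n|^2$, which is the claimed inequality with $C_1=\vertiii{U^{-1}}^{-2}$, $C_2=\vertiii{U}^2$. Uniqueness of the representing sequence: if $\sum_{k\in\mathcal{I}}a_k\xi_k=\sum_{k\in\mathcal{I}}b_k\xi_k$ with both sequences in $\ell^2$, applying $U^{-1}$ gives $\sum_{k\in\mathcal{I}}(a_k-b_k)e_k=0$ in $X$, and the orthonormality of $\{e_k\}_{k\in\mathcal{I}}$ forces $a_k=b_k$ for all $k$ (alternatively one may invoke the $\omega$-independence of the Riesz basis together with the remark preceding Lemma \ref{lem-iso-riesz-equi}, which shows any Cauchy-convergent combination has $\ell^2$ coefficients).

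There is no genuine obstacle in this argument — it is the standard duality theory of Riesz bases. The only points requiring care are justifying the interchange of the bounded operator $U$ with the infinite series (legitimate by continuity once Cauchy convergence is known), and bookkeeping of which isomorphism among $U$, $U^{-1}$, $(U^*)^{-1}$ produces each family and each constant.
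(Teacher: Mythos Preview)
Your proof is correct and is the standard duality argument for Riesz bases. Note, however, that the paper does not give its own proof of this lemma: it is simply quoted as Proposition~19 of \cite{Beauchard-Laurent} (and is a classical fact in the theory of nonharmonic Fourier series, see e.g.\ \cite{christensen2003introduction}). So there is nothing to compare against --- your argument is precisely the textbook proof one would expect, and it is written cleanly. The only minor remark is that the existence of the biorthogonal family is part of what must be shown (not just its uniqueness), but you do establish this by exhibiting $\xi_n'=(U^*)^{-1}e_n$ explicitly, so this is fine.
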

}

\section{From one to two controls}
\label{sec:noncontrol}
As we have mentioned in the introduction, due to the multiplicity of its nonzero eigenvalues, our system is not controllable with a single internal scalar control. We prove this assertion, and prove that controllability can however be achieved with two internal scalar controls. We then lay out how we adapt the backstepping method to obtain a stabilization result with two controls.

\subsection{Non-controllability with one scalar control}
\textcolor{black}{In this section we show that, for any $T>0$,  the system \eqref{eq:linearizedWW_single} is not controllable. Recall that $\{e^{inx}\}_{n\in\mathbb{Z}}$ is a basis of eigenfunctions of $A$ associated to the eigenvalue $\lambda_{n}=-n^{2}$.} {\color{black} For ease of the presentation \textcolor{black}{and for} symmetry \textcolor{black}{considerations}, in this section we choose to work with the orthonormal basis $\{e^{inx}\}_{n\in\mathbb{Z}}$ instead of $\{\sin nx, \cos nx\}$.} \textcolor{black}{Therefore, either
\begin{equation}\label{eq:phi-non-zero}
    \langle \phi, e^{inx}\rangle \neq 0,\;\forall\; n\in \mathbb{Z},
\end{equation}
or the system is non-controllable as there exists $n_{0}$ such that $\langle \phi, e^{in_{0}x}\rangle = 0$ and therefore the control has no effect on the associated one dimensional vector space: $e^{-n_{0}^{2}t+in_{0}x}$ is a solution of \eqref{eq:linearizedWW_single} with initial condition $e^{in_{0}x}$, whatever the control is. Assume now that \eqref{eq:phi-non-zero} holds.} Motivated by \textcolor{black}{the moment method (see for instance \cite{Russell-F})} and the fact that $\lambda_n=\lambda_{-n}$ we have \textcolor{black}{for a solution $u$ to \eqref{eq:linearizedWW_single} with $u(0)=0$,}
\begin{equation}
    \frac{\langle u(\textcolor{black}{T,\cdot}), e^{inx\rangle}}{\langle \phi, e^{inx}\rangle}=  \frac{\langle u(\textcolor{black}{T,\cdot}), e^{-inx\rangle}}{\langle \phi, e^{-inx}\rangle},
\end{equation}
\textcolor{black}{
thus let us denote
\begin{equation}
    d_{n}:= \frac{\langle \phi, e^{inx}\rangle}{\langle \phi, e^{-inx}\rangle},
\end{equation}
}
one has
\begin{equation}
\textcolor{black}{\langle u(\textcolor{black}{T,\cdot}), e^{inx}\rangle=  d_{n}\langle u(\textcolor{black}{T,\cdot}), e^{-inx}\rangle.}
\end{equation}
Hence the \textcolor{black}{states} $u(\textcolor{black}{T,\cdot})$ that are reachable \textcolor{black}{at time $T$} satisfy the following 
\begin{equation}\label{notcon}
    u(\textcolor{black}{T,x})= d_0+  \sum_{n\geq 1} k_n \left(d_n e^{inx}+ e^{-inx}\right),
\end{equation}
which means that the projection of the reachable space on the two dimensional space 
\textcolor{black}{$\text{Span}\{e^{inx}, e^{-inx}\}$}
is always of one dimension, hence the system is not controllable.
\\  

\textcolor{black}{This non-controllability prevents any stabilization result.} For instance, we can simply consider the space $\textcolor{black}{\text{Span}}\{\sin nx, \cos nx\}$, as we are only allowed to change the direction of $\left(a_n e^{inx}+ e^{-inx}\right)$,  the projection of the solution on its co-direction $\left( e^{inx}-a_n e^{-inx}\right)$ does not change, thus the solution is not asymptotically stable whatever the feedback control.

The setting is different from the one found in \cite{Lebeau-Robbiano-CPDE} for controllability  and \cite{Xiang-heat-2020} for finite time stabilization (the special case for $\T$, as these papers deal with general compact Riemannian manifolds),  where the controllability and the stabilizability is obtained under the assumption that the controlled domain is $\omega\subset \textcolor{black}{\mathbb{T}}$, for which the control has infinite degrees of freedom. \textcolor{black}{and not a one dimensional scalar control}. \\

\subsection{Controllability with two scalar controls}

{\color{black}According to the preceding section, two controls are required for the controllability of the heat equation on $\T$:  $\phi_1 v_1(t)+ \phi_2 v_2(t)$, which corresponds to the system \ref{eq:linearizedWW}.
\textcolor{black}{In fact}, two controls are eventually sufficient.  In the following we prove the controllability in $L^2(\T)$ space, while the other spaces can be treated similarly.

\textcolor{black}{D}ue to the smoothing effect of the heat equation we only consider the so-called null controllability, $i. e.$  for any $u_0\in L^2(\T)$ there exist $v_1, v_2\in L^2(0, T)$ (this space is the natural space according to Lions' \textcolor{black}{Hilbert Uniqueness Method}, though this is not the optimal candidate) such that the final state becomes $0$. In order to simplify the presentation, we always assume the projections of $\phi_1, \phi_2, u_0$ on the \textcolor{black}{direction corresponding to the eigenfunction} $e^{i0x}$ to be 0.
Assuming that $$u_0= \sum_{n\in \N^*} b_n^1e^{inx}+ b_n^2e^{-inx}\in L^2(\T),$$
Direct calculation yields,
\begin{align*}
u(T)= & \int_0^{T} e^{A (T-s)}(\phi_1 v_1(t)+ \phi_2 v_2(t)) ds+ \int_0^{T} e^{A(T-s)} u_0 ds  \\
= &  \sum_{n\in \textcolor{black}{\N^*}} \left(\int_0^{T} e^{\lambda_n (T-s)}\left(\<\phi_1,e^{inx}\> v_1(s)+ \<\phi_2,e^{inx}\> v_2(s)\right) ds \right) e^{inx}\\
\textcolor{black}{+}&\sum_{n\in \textcolor{black}{\N}^*} \left(\int_0^{T} e^{\lambda_n (T-s)}\left(\<\phi_1,e^{-inx}\> v_1(s)+ \<\phi_2,e^{-inx}\> v_2(s)\right) ds \right) e^{-inx}, \\
& \;\; + \sum_{n\in \textcolor{black}{\N}^*} \left(\frac{b^1_n}{e^{n^2 T}} e^{inx}+ \frac{b^2_n}{e^{n^2 T}} e^{-inx}\right). 
\end{align*}

The preceding  formula indicates that \textcolor{black}{the null controllability requires} that 
\begin{equation*}
\left(\int_0^T e^{\lambda_n (T-s)} v_1(s) ds , \int_0^T e^{\lambda_n (T-s)} v_2(s) ds \right)
\begin{pmatrix}
\<\phi_1,e^{inx}\> & \<\phi_1,e^{-inx}\> \\
\<\phi_2,e^{inx}\> & \<\phi_2,e^{-inx}\>
\end{pmatrix}
= \textcolor{black}{-}\left(\frac{b_n^1}{e^{n^2 T}}, \frac{b_n^2}{e^{n^2 T}}\right).
\end{equation*}
Consequencely,  for any $n\in \N^*$ provided that the following matrix is invertible, we can control the two dimensional space \textcolor{black}{S}pan$\{e^{inx}, e^{-inx}\}=$ \textcolor{black}{S}pan$\{\cos nx, \sin nx\}$,
\begin{equation*}
\begin{pmatrix}
\<\phi_1,e^{inx}\> & \<\phi_1,e^{-inx}\> \\
\<\phi_2,e^{inx}\> & \<\phi_2,e^{-inx}\>
\end{pmatrix}, \;\; \forall n\in \N^*.
\end{equation*}

It is a quite general assumption to achieve, a simple example can be
\begin{align*}
    \phi_1(x)= \sum_{n\in\N^*} c_n^1 e^{inx}, \; 
     \phi_2(x)= \sum_{n\in \N} c_n^2 e^{-inx},
\end{align*}
with $c^1_n c^2_n\neq 0$ \textcolor{black}{for any $n\in\mathbb{N}^{*}$. Note that, as mentioned earlier, we excluded the direction $1=e^{i0x}$ corresponding to the case $n=0$ to simplify the presentation but it could be included as well}.
Heuristically, this case already provides the exact controllability in projections on finite dimensional subspaces, for example,  span$\{e^{inx}: -N< n< N\}$. 
Moreover, if further $c^1_n, c^2_n$ satisfy some suitable growth assumption, the system is even exact null controllable in $L^2(\T)$.

\begin{prop}\label{prop-con}
If there exist $-\infty< \alpha\leq \beta< 1/2$ and $c, C>0$ such that
\begin{equation}\label{eq:cond-controllable}
    c n^{\alpha}\leq |c_n^1|, |c_n^2|\leq  C n^{\beta}, \forall n\in \N^*
\end{equation}
then the system \eqref{eq:linearizedWW} is $L^2(\T)$ exact null controllable.
\end{prop}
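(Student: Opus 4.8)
The plan is to reduce $L^2$ null controllability to a sequence of uniform moment problems, one for each frequency pair $\{e^{inx}, e^{-inx}\}$, and then to solve them with a uniformly bounded total cost. First I would fix $T>0$ and, as in the computation preceding the statement, observe that by duality (Hilbert Uniqueness Method) null controllability in $L^2(\T)$ is equivalent to an observability inequality for the adjoint heat equation; equivalently, since the frequencies decouple into the two-dimensional blocks $\mathrm{Span}\{e^{inx}, e^{-inx}\}$, it suffices to find $v_1, v_2 \in L^2(0,T)$ with $\sum_k \|v_k\|_{L^2(0,T)}^2 \lesssim \|u_0\|_{L^2}^2$ solving, for every $n\in\N^*$,
\begin{equation*}
\begin{pmatrix}
\langle\phi_1,e^{inx}\rangle & \langle\phi_1,e^{-inx}\rangle \\
\langle\phi_2,e^{inx}\rangle & \langle\phi_2,e^{-inx}\rangle
\end{pmatrix}^{\!\top}
\begin{pmatrix} \int_0^T e^{-n^2(T-s)}v_1(s)\,ds \\[2pt] \int_0^T e^{-n^2(T-s)}v_2(s)\,ds \end{pmatrix}
= -e^{-n^2 T}\begin{pmatrix} b_n^1 \\ b_n^2 \end{pmatrix}.
\end{equation*}
Under the hypothesis \eqref{eq:cond-controllable}, the coefficient matrix $M_n$ above has entries of size $\Theta(n^\alpha)$ to $\Theta(n^\beta)$; I would need a lower bound on $|\det M_n|$ to invert it. Here one uses the structural choice $\phi_1 = \sum c_n^1 e^{inx}$, $\phi_2 = \sum c_n^2 e^{-inx}$: then $M_n$ is essentially diagonal (up to the $n=0$ direction which we drop), $\langle \phi_1, e^{-inx}\rangle = \langle \phi_2, e^{inx}\rangle = 0$ for $n\geq 1$, so $M_n = \mathrm{diag}(c_n^1, c_n^2)$ and $|\det M_n| = |c_n^1 c_n^2| \geq c^2 n^{2\alpha}$. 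Thus the required moments are $q_n^k := \int_0^T e^{-n^2(T-s)} v_k(s)\,ds$ with $|q_n^k| \leq C' n^{-\alpha} e^{-n^2 T} |b_n^k|$.

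The heart of the argument is then a biorthogonal/moment estimate: I would invoke the classical construction (Fattorini–Russell, or the versions in \cite{Russell-F}, \cite{moment-book}, \cite{Beauchard-Laurent}) of a family $\{\psi_n\}_{n\in\N^*} \subset L^2(0,T)$ biorthogonal to $\{e^{-n^2(T-s)}\}_{n\in\N^*}$, with the sharp bound $\|\psi_n\|_{L^2(0,T)} \leq C_\varepsilon e^{\varepsilon n^2}$ for every $\varepsilon>0$ (this exploits that $\sum 1/n^2 < \infty$, the classical parabolic gap condition on the $\lambda_n = -n^2$). Setting $v_k(s) := \sum_{n\in\N^*} q_n^k \psi_n(s)$, one gets the prescribed moments provided the series converges in $L^2(0,T)$. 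Convergence and the cost bound follow from
\begin{equation*}
\|v_k\|_{L^2(0,T)} \leq \sum_{n\in\N^*} |q_n^k|\,\|\psi_n\|_{L^2(0,T)} \leq C_\varepsilon \sum_{n\in\N^*} n^{-\alpha} e^{-n^2 T} |b_n^k| e^{\varepsilon n^2},
\end{equation*}
and choosing $\varepsilon < T$ makes the exponential factor $e^{-(T-\varepsilon)n^2}$ summable, so by Cauchy–Schwarz this is bounded by $C(\alpha,\varepsilon,T)\,\|u_0\|_{L^2}$. This proves $u(T)=0$ with controls in $L^2(0,T)$ of norm controlled by $\|u_0\|_{L^2}$, i.e. exact null controllability.

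The main obstacle — or at least the step requiring the most care — is the biorthogonal family estimate $\|\psi_n\| \leq C_\varepsilon e^{\varepsilon n^2}$ with the right dependence: one must be sure the construction is for the half-line-type weights $e^{-n^2(T-s)}$ on the finite interval $(0,T)$ and that the blow-up in $n$ is strictly sub-$e^{n^2 T}$, which is exactly what the summability $\sum 1/\lambda_n < \infty$ (equivalently $\sum 1/n^2 < \infty$) delivers; this is the standard parabolic moment machinery, so I would cite it rather than reprove it. Two minor points to handle cleanly: the $n=0$ (constant) mode, which the statement's simplifying convention already removes (and which is trivially controllable since the matrix block there is just invertibility of a $1\times 1$ or $2\times 2$ matrix coming from $a_0^2 \neq 0$, to be checked directly), and verifying that the two scalar controls genuinely decouple per frequency so that the single biorthogonal family $\{\psi_n\}$ can be reused for both $v_1$ and $v_2$ — which it does, since the exponential kernels $e^{-n^2(T-s)}$ are the same for both controls. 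Assembling these pieces gives the proposition.
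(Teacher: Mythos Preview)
Your proposal is correct and follows essentially the same route as the paper: reduce to the decoupled moment problems $\int_0^T e^{-n^2(T-s)} v_k(s)\,ds = -(c_n^k)^{-1} e^{-n^2 T} b_n^k$, then solve them via a biorthogonal family to $\{e^{-n^2(T-s)}\}_n$ in $L^2(0,T)$ and check that the resulting series converges. The only cosmetic difference is the biorthogonal bound you quote: the paper invokes the sharper Fattorini--Russell estimate $\|\Psi_n\|_{L^2(0,T)} \leq C e^{Cn}$ rather than your $C_\varepsilon e^{\varepsilon n^2}$, but either suffices since both are dominated by $e^{n^2 T}$ and leave enough room for the polynomial factor $n^{-\alpha}$.
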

\begin{remark}
The assumption on $\beta<1/2$ is \textcolor{black}{here} to guarantee that the functions $\phi_i\in H^{-1}$\textcolor{black}{.} Thus for any given $v(t)\in L^2(0, T)$ the inhomogeneous term $v(t)\phi_i(x)$ belongs to $L^2((0, T); H^{-1})$, which indicates that  the open-loop system is well-posed in $C^0([0, T); L^2)\cap L^2((0, T); H^1)$. \textcolor{black}{On the other hand,} the assumption $-\infty< \alpha$ is \textcolor{black}{used for} the null controllability property that will be proved in the following. \textcolor{black}{T}he lower bound on $c_n^i$ proposed here is not the sharp condition. As we can see from  the following proof, to get the null controllability in $L^2(\T)$ space, it suffices to find some $0< T_0< T$ and $C>0$ such that
\begin{equation}
    C e^{-T_0 n^2} \leq |c_n^1|, |c_n^2|, \forall n\in \N^*,
\end{equation}
which is of course weaker than the condition proposed in Proposition \ref{prop-con}.
\end{remark}
\begin{proof}[Proof of Proposition \ref{prop-con}] 
In order to solve the controllability problem, it suffices to treat the following moment problem: show that for any $\{b_n^1\}_{\N^*}, \{b_n^2\}_{\N^*}\in \ell^2(\N^*)$, there exist $v_1(t), v_2(t)\in L^2(0, T)$ such that 
\begin{align}
   \int_0^{T} e^{-n^2 (T-s)} v_1(s) ds&= \frac{b_n^1}{e^{n^2 T} c^1_n}, \;\; \forall n\in \N^*, \label{mon-v1-lin} \\
 \int_0^{T} e^{-n^2 (T-s)} v_2(s) ds&= \frac{b_n^2}{e^{n^2 T} c^2_n}, \;\; \forall n\in \N^*. \label{mon-v2-lin}
\end{align}
\textcolor{black}{Solving this problem is, in fact,} a direct consequence of the following moment theory. 
\begin{lem}[\cite{Russell-F}, Section 3, Equation (3.25)]\label{lem-mom-rf}
For any $T>0$. The sequence  $\{e^{-n^2(T-s)|_{s\in (0, T)}}\}_{n\in \N^*}$ is minimal in $L^2(0, T)$, thus \textcolor{black}{admits} a bi-orthogonal sequence $\{\Psi_n\}_{n\in \N^*}$ satisfying
\begin{equation}
    \int_0^{T} e^{-n^2(T-s)} \Psi_m(s) ds= \delta_{n, m}, \forall n,m \in \N^*.
    \end{equation}
Moreover, there exists $C>0$ such that
\begin{equation}
        \|\Psi_n\|_{L^2(0, T)}\leq C e^{C n}.
\end{equation}
\end{lem}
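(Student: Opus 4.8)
The plan is to reduce the statement to a classical interpolation problem for entire functions and then invoke the Fattorini--Russell construction. After the time reversal $s\mapsto T-s$, the claim is that $\{e^{-n^2t}\}_{n\in\N^*}$ is minimal in $L^2(0,T)$ and has a biorthogonal family $\{\Psi_n\}_{n\in\N^*}$ with $\|\Psi_n\|_{L^2(0,T)}\le Ce^{Cn}$. Since minimality of a family in a Hilbert space is equivalent, by Hahn--Banach, to the existence of a biorthogonal family, it suffices to construct the $\Psi_n$ with the stated norm bound; the qualitative minimality alone also follows, more cheaply, from the M\"untz--Sz\'asz theorem after the substitution $x=e^{-t}$, using $\sum_{n\ge1}n^{-2}=\pi^2/6<\infty$. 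I would then reformulate the problem via Paley--Wiener: a function $\Psi\in L^2(0,T)$ is determined by its truncated Laplace transform $F(z)=\int_0^Te^{-zt}\Psi(t)\,dt$, which is entire of exponential type $\le T$ (more precisely, in a Paley--Wiener class, with $\|F(x_0+i\cdot)\|_{L^2(\R)}$ controlled by $e^{T\max(-x_0,0)}\|\Psi\|_{L^1}$) and satisfies $\|\Psi\|_{L^2(0,T)}^2=\tfrac{1}{2\pi}\|F(i\cdot)\|_{L^2(\R)}^2$; conversely every such $F$ arises this way. Hence it is enough to build, for each $m\in\N^*$, an entire function $F_m$ of exponential type $\le T$, square-integrable on the imaginary axis, with $F_m(n^2)=\delta_{nm}$ for all $n\in\N^*$ and $\|F_m\|_{L^2(i\R)}\le Ce^{Cm}$.

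For the construction I would start from the canonical product
\[
 P(z)=\prod_{n\ge1}\Bigl(1-\frac{z}{n^2}\Bigr)=\frac{\sin(\pi\sqrt z)}{\pi\sqrt z},
\]
an entire function of order $1/2$ (hence of exponential type $0\le T$) whose zeros are exactly the $n^2$, with $P'(m^2)=(-1)^m/(2m^2)$ by the Euler product for $\sin$. The naive interpolant $z\mapsto P(z)/\bigl(P'(m^2)(z-m^2)\bigr)$ already satisfies $F_m(n^2)=\delta_{nm}$, but on the imaginary axis $|P(iy)|$ behaves like $|y|^{-1/2}e^{\pi\sqrt{|y|/2}}$, so it is not in $L^2(i\R)$. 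The remedy is to insert a fixed multiplier $M_T$: an entire function of exponential type $\le T$, nonvanishing at the nodes with $|M_T(m^2)|\ge c\,e^{-Cm}$, and decaying fast enough on the imaginary axis (like $e^{-\pi\sqrt{|y|/2}}$ times a negative power of $|y|$) that $PM_T\in L^2(i\R)$. Such an $M_T$ is produced as a canonical product over an auxiliary real zero set whose counting function grows like $c\sqrt r$, using standard density estimates for products of prescribed genus; this is precisely the Fattorini--Russell multiplier lemma, $T$ entering only through the exponential type of $M_T$. One then sets $F_m(z)=\dfrac{P(z)M_T(z)}{P'(m^2)\,M_T(m^2)\,(z-m^2)}$, which belongs to the required Paley--Wiener class and interpolates $\delta_{nm}$ at the $n^2$.

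Finally, the norm estimate: by Plancherel,
\[
 \|\Psi_m\|_{L^2(0,T)}^2=\frac{1}{2\pi}\int_\R|F_m(iy)|^2\,dy
 =\frac{1}{2\pi\,|P'(m^2)|^2\,|M_T(m^2)|^2}\int_\R\frac{|P(iy)M_T(iy)|^2}{m^4+y^2}\,dy .
\]
Bounding $m^4+y^2\ge m^4$ and using $PM_T\in L^2(i\R)$, the last integral is at most $Cm^{-4}$ with $C$ independent of $m$; since $|P'(m^2)|^{-2}=4m^4$, the powers of $m$ cancel and $\|\Psi_m\|_{L^2(0,T)}^2\le C'|M_T(m^2)|^{-2}\le C''e^{2Cm}$, which is the claim. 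The mere existence of this biorthogonal family also establishes the minimality asserted in the lemma.

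The main obstacle is the construction of the multiplier $M_T$: exhibiting an entire function of finite exponential type $\le T$ that decays like $e^{-\pi\sqrt{|y|/2}}$ along the imaginary axis yet stays bounded below by $e^{-Cm}$ at the real points $m^2$. This is the genuinely delicate ingredient -- it is where the finiteness of the interval $T$ and the density $\#\{n:\,n^2\le r\}\sim\sqrt r$ of the spectrum are used -- and in a self-contained write-up one would either reproduce the Fattorini--Russell argument or simply cite it, as is done here.
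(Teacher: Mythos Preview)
The paper does not give a proof of this lemma: it is stated with a direct citation to Fattorini--Russell (Section~3, Equation~(3.25) of the reference) and then immediately used to solve the moment problems \eqref{mon-v1-lin}--\eqref{mon-v2-lin}. So there is nothing to compare against on the paper's side.

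Your sketch is a faithful outline of the classical Fattorini--Russell construction that the citation points to: Paley--Wiener to trade $L^2(0,T)$ functions for entire functions of exponential type $\le T$, the canonical product $P(z)=\sin(\pi\sqrt z)/(\pi\sqrt z)$ vanishing at the $n^2$, division to get the raw interpolant, and a multiplier $M_T$ of type $\le T$ to restore $L^2$-integrability on $i\R$ while keeping $|M_T(m^2)|\ge c\,e^{-Cm}$. Your computations of $P'(m^2)=(-1)^m/(2m^2)$ and of the growth $|P(iy)|\sim |y|^{-1/2}e^{\pi\sqrt{|y|/2}}$ are correct, and the bookkeeping in the final norm estimate is sound. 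You are also right that the only genuinely nontrivial step is the existence of the multiplier $M_T$, and you correctly identify that this is exactly the content one either reproduces or cites --- which is what the paper does. One small remark: your sentence ``as is done here'' is apt, since the paper indeed only cites the result; your write-up goes further than the paper by actually sketching the mechanism behind it.
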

By adapting Lemma \ref{lem-mom-rf}, \textcolor{black}{and assuming \eqref{eq:cond-controllable},} we know that the moment problem \eqref{mon-v1-lin} can be solved by \textcolor{black}{setting}
\begin{equation}
    v_1(s):= \sum_{n\in N^*} \frac{b_n^1}{e^{n^2 T} c^1_n} \Psi_n,
\end{equation}
satisfying, \textcolor{black}{from \eqref{eq:cond-controllable},}
\begin{equation}
    \|v_1\|_{L^2(0, T)}\leq \sum_{n\in N^*}  C b^1_n n^{-\alpha} e^{C n- Tn^2}\leq  C_{2}\sum_{n\in N^*}   b^1_n  e^{-Tn^2/2}<+\infty,
\end{equation}
\textcolor{black}{where $C_{2}>0$ is a constant independent of $n$.} Then \textcolor{black}{a} similar procedure leads to $v_2(t)$ as the solution of the moment problem \eqref{mon-v2-lin}.
\end{proof}

}

\subsection{Double backstepping: strategy and outline}
\label{sec:strategyandoutline}

Inspired by the fact that \textcolor{black}{$(\sin(nx),\cos(nx))$ form an orthonormal basis of the two-dimensional eigenspaces corresponding to the eigenvalue $\lambda_{n}$,} we directly consider the special form of $\phi_k$:
\begin{gather}
    \phi_1:= \sum_{n\in \N^*} a_n^{1} \sin nx\in L^2_1, \\
     \phi_2:= \sum_{n\in \N} a_n^{2} \cos nx\in L^2_2.
\end{gather}
We can \textcolor{black}{similarly} separate the function $y(t)$ in
\begin{equation}
    \textrm{$y(t)=: y_1(t)+ y_2(t)$ with $y_k\in L_k^2$.}
\end{equation}
Therefore,
\begin{equation}\label{eqk}
\left\{
\begin{aligned}
    \partial_t y_1- \partial_x^2 y_1&= u_1(t)\phi_1 , \\
    \partial_t y_2- \partial_x^2 y_2&=u_2(t) \phi_2 
    \end{aligned}
    \right.
\end{equation}
\textcolor{black}{The logic behind is to deal with the odd functions using the first control and with the even functions using the other one. What we are going to show is that each of the systems \eqref{eqk} can be rapidly stabilized for $k\in\{1,2\}$.}\\

\textcolor{black}{To do so, we would like to show that for any $k\in\{1,2\}$ and for any $\lambda>0$, under some conditions on $\phi_{k}$,}
there exists \textcolor{black}{an isomorphism} $T_k(\lambda): L^2_k\rightarrow L^2_k$ as well as a feedback $u_k(t):= K_k(\lambda) y_k(t, x)$ such that the solution of \begin{gather*}
\begin{cases}
    \partial_t y_k- \partial_x^2 y_k= \phi_k K_k y_k, \\
    y_k(0)\in L^2_k
\end{cases}    
\end{gather*}
is such that $z_k:= T_k(\lambda) y_k$ satisfies the following equation
\begin{equation}\label{eq:target}
    \partial_t z_k- \partial_x^2 z_k- \lambda z_k=0, 
    z_k\in L^2_k. 
\end{equation}
\textcolor{black}{With this property,
the stabilization result would follow} simply \textcolor{black}{by 
a} decomposition of $y$ in odd and even parts
\textcolor{black}{provided that the system is well-posed.}\\

\textcolor{black}{This existence of an isomorphism $T_{k}$ and a feedback law $K_{k}$ is given by the following key proposition.
}
\begin{prop}\label{thm-ope-t}
 Let the countable set 
\begin{equation}
    \mathcal{N}:= \{i^2-j^2: i, j\in \N\},
\end{equation}
let $m\in\mathbb{R}$ and 
$k\in\{1,2\}$. Assume that the sequence $\{a_n^{k}\}_{n}$ satisfies
\begin{gather}
    c n^{-m}< |a_n^k|< C n^{-m},\;\text{ for }k\in\{1,2\},  \;\text{ for }n\in\N^*,\\
    a^2_0\neq 0.
\end{gather}
Then for any $\lambda\notin \mathcal{N}$, there exists a sequence $\{K_n^{k}\}_{n}$ satisfying 
\begin{align*}
K^2_0 & \neq 0, \\
     c n^{m}< |K_n^{k}|< C n^{m}, & \;\text{ for }k\in\{1,2\},  \;\text{ for }n\in\N^*,\\
     \{(\lambda+ a_n^{k} K_n^{k}) n^r\}_{n}\in\ell^2, & \;\; \forall r\in [0, 1/2),\\
     K_{k} \textrm{ is a bounded functional } & \textrm{on } H^{m+1/2+}_k, 
\end{align*}
such that the linear operator $T_{k}$ defined as follows 
\begin{align}
    T_{k}: & \; \mathcal{S} \rightarrow \mathcal{S}'_k, \\
     f_n^{k} & \mapsto -K_n^{k} \sum_{p} \frac{a_p f_p^{k}}{p^2+\lambda-n^2}, \\
       f_n^{3-k} & \mapsto 0,
\end{align}
can be linearly extended to $H^{m-3/2+}$, and
\begin{gather}
    T_{k} \textrm{ is an isomorphism on } H^{m+\textcolor{black}{s}}_k \textrm{ for any } \textcolor{black}{s}\in (-3/2, 3/2),\\
    T_{k}\textcolor{black}{\phi_{k}}= \textcolor{black}{\phi_{k}} \textrm{ in } H^{m-1/2-}_k, 
\end{gather}
and moreover, for any $\textcolor{black}{r}\in (-1/2, 1/2)$, for any $\varphi\in H^{m+\textcolor{black}{r}+1}_k$ we have that 
\begin{equation}\label{eq:operator-eq}
      (T_{k}A+ T_{k}\phi_{k}K_{k})\varphi= (AT_{k}-\lambda T_{k})\varphi \textrm{ in } H^{m+ \textcolor{black}{r}-1}_{k}.
\end{equation}
\end{prop}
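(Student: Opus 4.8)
The strategy is to construct the feedback coefficients $\{K_n^k\}_n$ explicitly from the backstepping equations, then to show that the resulting operator $T_k$, defined by its action on the eigenbasis, is well-posed and an isomorphism on the relevant scale of spaces. First I would derive the scalar equations governing $(K_n^k, T_k)$: writing the operator equality \eqref{eq:operator-eq} on the eigenvector $f_n^k$, using $A f_n^k = \lambda_n f_n^k = -n^2 f_n^k$ and the ansatz $T_k f_n^k = \sum_p t_{n,p} f_p^k$, one gets that the choice $t_{n,p} = -K_n^k a_p/(p^2 + \lambda - n^2)$ is forced by matching coefficients, together with the constraint coming from $T_k \phi_k = \phi_k$. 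The latter, projected onto $f_p^k$, yields a single scalar identity of the form $\sum_n K_n^k a_n^k \cdot (\text{something}) = 1$ for each $p$, or more precisely a relation determining $K_n^k$ in terms of $\{a_n^k\}$ and $\lambda$; solving it gives $K_n^k \sim \lambda/a_n^k \cdot c_n$ with $c_n \to$ (nonzero constant), whence the growth estimate $cn^m < |K_n^k| < Cn^m$ and $K_0^2 \neq 0$. The condition $\lambda \notin \mathcal{N}$ is exactly what guarantees $p^2 + \lambda - n^2 \neq 0$ for all $p,n$, so no denominator vanishes.

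Next I would verify the summability statements. The key quantitative point is the estimate $\{(\lambda + a_n^k K_n^k) n^r\}_n \in \ell^2$ for $r \in [0,1/2)$: this says the ``diagonal defect'' of $T_k$ decays fast enough, and is what makes the family $\{T_k f_n^k\}$ quadratically close to a rescaled eigenbasis. This should follow from a careful expansion of the defining sum $\sum_p \frac{a_p f_p^k}{p^2 + \lambda - n^2}$, isolating the $p = n$ term (which contributes $\frac{a_n}{\lambda}$, cancelling against the normalization to leading order) and bounding the tail $\sum_{p \neq n} \frac{a_p}{p^2 - n^2}$ — a standard but delicate Hilbert-type sum whose size is controlled using $|a_p| \asymp p^{-m}$ and the dispersion of $\{n^2\}$. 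For the boundedness of $K_k$ as a functional on $H^{m+1/2+}_k$ one checks $\{K_n^k n^{-(m+1/2+\varepsilon)}\}_n \in \ell^2$, immediate from $|K_n^k| < Cn^m$.

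Then I would establish that $T_k$ extends continuously to $H^{m-3/2+}$ and is an isomorphism on $H^{m+s}_k$ for $s \in (-3/2, 3/2)$. Writing $T_k = D_k(\id + R_k)$ where $D_k$ is the diagonal operator $f_n^k \mapsto -\frac{K_n^k a_n^k}{\lambda} f_n^k$ (which, by the growth of $K_n^k a_n^k \asymp 1$, is an isomorphism on each $H^{m+s}_k$ up to the shift in index) and $R_k$ collects the off-diagonal remainder, one shows $R_k$ is bounded on the relevant spaces with the quadratic-closeness estimate from the previous step giving a Hilbert–Schmidt-type bound; then invoking Lemma \ref{lem-cri-riesz} (the image family is quadratically close to an orthonormal basis and is $\omega$-independent or dense) together with Lemma \ref{lem-iso-riesz-equi} identifies $\{T_k f_n^k\}$ as a Riesz basis, hence $T_k$ as an isomorphism. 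The index shift (why $H^{m-3/2+}$ maps to something, and the isomorphism range $(-3/2,3/2)$) comes from the fact that dividing by $p^2 + \lambda - n^2 \asymp p^2$ for large $p$ gains two derivatives in $p$ but the prefactor $K_n^k \asymp n^m$ and $a_p \asymp p^{-m}$ rebalance things. Finally the operator equality \eqref{eq:operator-eq} on $H^{m+r-1}_k$ for $r \in (-1/2,1/2)$ is obtained by continuity: it holds on $\mathcal{S}_k$ by construction (term-by-term on eigenvectors), and both sides extend to bounded maps $H^{m+r+1}_k \to H^{m+r-1}_k$ — here the point noted in Remark \ref{rmk:regularityoperatorequality} is relevant, namely one does not need the identity on $D(A+BK)$ but only on this Sobolev scale, which the heat dissipation makes sufficient.

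\textbf{Main obstacle.} The crux is the quadratic-closeness/summability estimate $\{(\lambda + a_n^k K_n^k)n^r\}_n \in \ell^2$ for $r$ up to $1/2$: this is where the sharp functional setting is won or lost, and it requires precise control of the off-diagonal sums $\sum_{p \neq n} a_p/(p^2 - n^2 + \lambda)$ uniformly in $n$, exploiting cancellation rather than absolute bounds (an absolute bound would only give $r = 0$, or fail the endpoint). This is also exactly the place where the threshold phenomenon alluded to in the introduction (eigenvalue growth $n^{3/2+\varepsilon}$ being critical) enters, since for the Laplacian $|\lambda_n| \sim n^2$ sits comfortably above it; the estimate would degrade for slower-growing symbols. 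Everything else — the algebra of the backstepping equations, the Riesz-basis criterion, the continuity extension of the operator equality — is comparatively routine once this estimate is in hand.
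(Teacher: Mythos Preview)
Your overall architecture matches the paper's, but there is a genuine gap at the step where you claim the two-sided bound $cn^m < |K_n^k| < Cn^m$. Solving $T_k\phi_k = \phi_k$ gives (in the paper's notation, with $m=0$) $-a_nK_n = \lambda + c_n$ where $\{c_n\}\in\ell^2$; this yields the \emph{upper} bound and an \emph{asymptotic} lower bound, but does not exclude $K_{n_0}=0$ (equivalently $c_{n_0}=-\lambda$) for finitely many indices. Your decomposition $T_k = D_k(\id + R_k)$ with $D_k$ a diagonal isomorphism already presupposes $K_n a_n\neq 0$ for every $n$, so the argument is circular at this point.

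The paper closes this gap by a Fredholm argument that you omit entirely (Sections~\ref{sec-ine-kn-pro}--\ref{sec-T-inv-L2}). With only the upper bound on $|K_n|$ in hand, $T$ is merely bounded on $H^s_1$; one then shows $\tilde T:=T-\id$ is compact (Lemma~\ref{lem-key-compact}), so $T$ is Fredholm of index~$0$ on $H^{-1}_1$, and proves $\ker T^*=\{0\}$ via a spectral argument: find $\rho$ making both $A+\rho\,\id$ and $A+\phi K+\lambda\,\id+\rho\,\id$ invertible, show $\ker T^*$ is stable under $(A+\rho\,\id)^{-1}$, extract an eigenfunction which must be some $f_k$, and contradict $T\phi=\phi$. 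Only after $T$ is known to be invertible on $H^{-1}_1$ does one deduce $K_n\neq 0$ for every $n$ (if $K_{n_0}=0$ then $q_{n_0}$ would fail to be in the image), and hence the full lower bound.

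Two smaller points. First, your identification of the ``main obstacle'' is off: the $\ell^2$ estimate $\{c_n n^r\}\in\ell^2$ for $r<1/2$ is obtained in the paper with \emph{absolute} bounds (Lemma~\ref{lem-rbheat-2}(3)), no cancellation needed --- the key is $\sum_{n\neq p}|p^2+\lambda-n^2|^{-1}\lesssim (1+\log p)/p$. Second, the $\omega$-independence/density input to Lemma~\ref{lem-cri-riesz}, which you call routine, is in fact a separate argument (iterating $\mathcal A^{-1}$ on a putative null combination and invoking a holomorphic-function uniqueness trick) that the paper carries out for $\{g_n\}$ and transfers to $\{q_n\}$; it is not automatic.
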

{\color{black}
\textcolor{black}{This proposition gives exactly what we want. Indeed,} if we denote by $$B= (\phi_1, \phi_2), \; K= (K_1, K_2)^T $$ 
and the linear operator $T$ by
\begin{gather}
    T_{12} f:= T_1 f+ T_2 f, \forall f\in \mathcal{S},
\end{gather}
then immediately we get \textcolor{black}{the following}.
\begin{cor}\label{cor-key}
 Under the assumption of Proposition \ref{thm-ope-t}, the transformation $T_{12}$ can be linearly extended on $H^{m-3/2+}$. Moreover, 
\begin{equation*}
    T_{12} \textrm{ is an isomorphism on } H^{m+s} \textrm{ for any } s\in (-3/2, 3/2),
\end{equation*}
and, 
for any $r\in (-1/2, 1/2)$, for any $\varphi\in H^{m+r+1}$,
\begin{gather}
 (T_{12}A+ T_{12}BK)\varphi= (AT_{12}-\lambda T_{12})\varphi \;  \textrm{ in } H^{m+r-1}, \label{eq:operatorequality}\\
    T_{12}B=B \; \textrm{ in } H^{m-1/2-}.\label{eq:TB=B}
\end{gather} 
\end{cor}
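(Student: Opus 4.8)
The plan is to deduce Corollary~\ref{cor-key} directly from Proposition~\ref{thm-ope-t} by exploiting the orthogonal decomposition $H^{m+s} = H^{m+s}_1 \oplus H^{m+s}_2$ and the fact that $T_1$ acts trivially on $H^s_2$ and $T_2$ trivially on $H^s_1$. First I would record that, since $T_1(f^2_n) = 0$ and $T_2(f^1_n) = 0$, the operator $T_{12} = T_1 + T_2$ respects the splitting: for $\varphi = \varphi_1 + \varphi_2$ with $\varphi_k \in \mathcal{S}_k$ one has $T_{12}\varphi = T_1\varphi_1 + T_2\varphi_2$. Proposition~\ref{thm-ope-t} already tells us that each $T_k$ extends to a bounded linear map on $H^{m-3/2+}$ and restricts to an isomorphism of $H^{m+s}_k$ for $s \in (-3/2,3/2)$; taking the direct sum of these two extensions gives the extension of $T_{12}$ to $H^{m-3/2+}$. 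The fact that $T_{12}$ is an isomorphism on $H^{m+s}$ then follows because a block-diagonal operator on $H^{m+s}_1 \oplus H^{m+s}_2$ is invertible iff each block is, and the inverse is the direct sum $T_1^{-1}\oplus T_2^{-1}$, which is bounded with norm $\max(\vertiii{T_1^{-1}}, \vertiii{T_2^{-1}})$.

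Next I would establish the operator equality \eqref{eq:operatorequality}. Fix $r \in (-1/2,1/2)$ and $\varphi \in H^{m+r+1}$, and write $\varphi = \varphi_1 + \varphi_2$ with $\varphi_k \in H^{m+r+1}_k$. The key structural observation is that $A = \Delta$ and $A - \lambda I$ preserve the subspaces $H^s_k$, so applying \eqref{eq:operator-eq} of Proposition~\ref{thm-ope-t} separately for $k=1$ and $k=2$ gives $(T_k A + T_k \phi_k K_k)\varphi_k = (A T_k - \lambda T_k)\varphi_k$ in $H^{m+r-1}_k$. It remains only to check that the cross terms vanish: $K_k$ is a functional acting on the whole of $H^{m+r+1}$ but, by construction, $T_k \phi_k K_k \varphi = K_k(\varphi)\, T_k\phi_k = K_k(\varphi)\,\phi_k$, and in the ``target'' interpretation one must verify $BK = \phi_1 K_1 + \phi_2 K_2$ acts so that $T_1$ only sees the $\phi_1 K_1(\varphi)$ part (since $T_1\phi_2 = 0$ as $\phi_2 \in L^2_2$ and $T_1$ kills $H^s_2$), and symmetrically for $T_2$. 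Summing the two identities over $k$ and using $H^{m+r-1} = H^{m+r-1}_1 \oplus H^{m+r-1}_2$ yields \eqref{eq:operatorequality}. I should be slightly careful about which space the functionals $K_k$ are bounded on: Proposition~\ref{thm-ope-t} gives boundedness on $H^{m+1/2+}_k$, and since $H^{m+r+1} \hookrightarrow H^{m+1/2+}$ for $r > -1/2$, the pairing $K_k(\varphi)$ is well-defined — this is the one place where the hypothesis $r \in (-1/2,1/2)$ is genuinely used.

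For \eqref{eq:TB=B}, I would simply note $T_{12} B = (T_{12}\phi_1, T_{12}\phi_2) = (T_1\phi_1 + T_2\phi_1,\ T_1\phi_2 + T_2\phi_2)$; since $\phi_1 \in H^{m-1/2-}_1$ and $\phi_2 \in H^{m-1/2-}_2$, the trivial-action property gives $T_2\phi_1 = 0$, $T_1\phi_2 = 0$, so $T_{12}B = (T_1\phi_1, T_2\phi_2) = (\phi_1,\phi_2) = B$ in $H^{m-1/2-}$, using $T_k\phi_k = \phi_k$ from Proposition~\ref{thm-ope-t}. The main (indeed only) obstacle here is bookkeeping: making sure every identity is asserted in the correct Sobolev space and that the cross terms really do vanish at the claimed regularity — there is no substantial analytic content beyond what Proposition~\ref{thm-ope-t} already supplies, since this corollary is essentially the reassembly of the odd and even pieces into a single statement on $H^{m+s}(\T)$.
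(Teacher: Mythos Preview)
Your proposal is correct and is precisely the ``immediate'' argument the paper has in mind: the corollary is stated as following directly from Proposition~\ref{thm-ope-t} via the block-diagonal structure of $T_{12}$ with respect to the decomposition $H^{m+s}=H^{m+s}_1\oplus H^{m+s}_2$, and you have simply written out that bookkeeping in full. The only small point worth making explicit is that $K_k$ is extended to all of $H^{m+1/2+}$ by setting it to zero on $H^{m+1/2+}_{3-k}$ (as the paper does later in Corollary~\ref{cor-euofsol.key}), which is what makes $K_k(\varphi)=K_k(\varphi_k)$ and hence the summation over $k$ go through cleanly.
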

}
Proposition \ref{thm-ope-t} will be proved in Section \ref{sec:proof-prop}.

\begin{remark}\label{rmk:regularityoperatorequality}
The functional setting of \eqref{eq:operatorequality} is optimal without assuming additional compatibility conditions. Indeed, as shown in \cite{CGM}, the operator equality \eqref{eq:operatorequality} is satisfied for $r=1$ if $\phi \in H^{m+r+1}$ satisfies additional compatibility conditions, {\color{black} for $\varphi$ satisfying some regularity requirement, namely $D(A+ BK)$}. In \cite{CGM}, this can be seen from the fact that $r\geq 1/2$ is the precise space for which the trace of $\phi$ makes sense and for which the compatibility conditions can be ensured. 
\end{remark}

\section{Building the backstepping transformation}
\label{sec:proof-prop}
In this section we only work on the odd functions which correspond to the $H^m_1$ spaces. We assume $k=1$ in the following.
Similar results hold for even functions which correspond to the $H^m_2$ spaces.
To simplify the notations, in this section we ignore the index $k$ and we also denote \textcolor{black}{$T:= T_k, K:= K_k$ and $\phi:=\phi_{k}$.} This Section will be divided in several parts: 
\begin{enumerate}
    \item In Section \ref{sec-set-up-back} we reformulate the problem by projecting the equation on the eigenfunctions of $A$ and we build a candidate $T$ to satifies the operator equality \eqref{eq:operator-eq}.
    \item In Section \ref{sec-riesz-pro} we prove some Riesz basis properties which will be used to study the invertibility of $T$.
    \item In \ref{sec-feedback-candidate} we prove the existence of a feedback $K$ such that the corresponding candidate $T$ satisfies the condition $T\phi = \phi$ weakly.
    \item In Section \ref{sec-T-inv-L2} we show that this candidate $T$ is an isomorphism that satisfies the operator equality \eqref{eq:operator-eq}.
\end{enumerate}

\subsection{Setting up for the backstepping transformation}\label{sec-set-up-back}

We want to map the solution of 
\begin{equation}
    y_t-y_{xx}= \phi Ky, \quad y\in L^2_1,
\end{equation}
via transformation $T$, to the solution of
\begin{equation}
    z_t-z_{xx}- \lambda z=0, \quad z\in L^2_1.
\end{equation}

To achieve this aim we would like $T$ to satisfies formally the backstepping conditions 
\begin{align}
& TA + \phi K = AT - \lambda T \label{formal--1}\\
& T\phi=\phi
\end{align}
for a suitable feedback law $K$:
\begin{equation}
    K: f_n\mapsto \langle  f_n, K\rangle=:K_n\in \R.
\end{equation}
Projected on the eigenvectors $f_{n}:=\sin nx$ with eigenvalues $\lambda_n=-n^2$, the formal relation \eqref{formal--1} becomes
\begin{gather}
T\textcolor{black}{(}\Delta f_{n}\textcolor{black}{)}+\langle f_{n},K\rangle \phi=\Delta(Tf_{n})-\lambda (Tf_{n}),\label{eq:firstcond}\\
\langle T\phi,f_{n}\rangle = \langle\phi,f_{n}\rangle.
\label{eq:secondcond}
\end{gather}

Defining \begin{equation}
    h_n:=Tf_{n},
\end{equation}  the first condition becomes
\begin{equation}
\lambda_{n} h_n+\langle f_{n},K\rangle \phi=\Delta h_n-\lambda h_n.
\end{equation}
Projecting the preceding equation now on $f_{p}$, defining $$a_{p}:= \langle \phi,f_{p}\rangle$$ and using the fact that $\Delta$ is self-adjoint we get
\begin{equation}
\begin{split}
\lambda_{n} \langle h_n,f_{p}\rangle+\langle f_{n},K\rangle a_{p}=(\lambda_{p}-\lambda)\langle h_n,f_{p}\rangle,
\end{split}
\end{equation}
Hence, for any $n, p\in \N^*$, 
\begin{equation}
    \langle h_n,f_{p}\rangle=\frac{-K_n a_{p}}{\lambda_{n}-\lambda_{p}+\lambda}
\end{equation}

Therefore 
\begin{equation}
    q_n:= - \frac{h_n}{K_n}= \sum_{p\in \N^*} \frac{a_p f_p}{\lambda_{n}-\lambda_{p}+\lambda}, \quad \forall n\in \N^*.
\end{equation}
Inspired by the preceding formula, the number $\lambda$ should be selected in such a way that 
\begin{equation}
    \lambda_{n}-\lambda_{p}+\lambda\neq 0, \quad \forall p, n\in \N^*,
\end{equation}
which is rather easy to achieve, for example to choose from $\N^*+ 1/2$. More precisely, it suffices to choose 
\begin{equation}
    \lambda\notin \mathcal{N}:= \{i^2-j^2: i, j\in \N\}.
\end{equation}

\subsection{Riesz basis properties}\label{sec-riesz-pro}
Recall that for any $n\in \N^*$,
\begin{gather}
    f_n= \dfrac1{\sqrt{\pi}}\sin nx, \; \; \lambda_n=- n^2, \\
    A:= \Delta, \;\;\; \phi= \sum_{n\in\mathbb{N}^{*}} a_n f_n, \\ h_n:= T f_n, \;\;\;  -K_n q_n:= h_n, \\
       q_n=  \sum_{p\in \N^*} \frac{a_p f_p}{\lambda_{n}-\lambda_{p}+\lambda}, \\
     g_n:=  \sum_{p\in \N^*} \frac{f_p}{\lambda_{n}-\lambda_{p}+\lambda}\in H^{3/2-}.
\end{gather}
\textcolor{black}{This last claim on the regularity of the $g_n$ comes from the growth of the eigenvalues $\lambda_{p}$,} {\color{black} \textit{i.e.,} for any $n\in \N^*$ we have 
\begin{gather}
  \|g_n\|_{H^{s}}^2=  \sum_{p\in \N^*} \frac{p^{2s}}{(p^2-n^2+\lambda)^2}< +\infty, \forall s\in (-\infty, 3/2), \\
   \|g_n\|_{H^{s}}^2=  \sum_{p\in \N^*} \frac{p^{2s}}{(p^2-n^2+\lambda)^2}= +\infty, \textrm{ for } s=3/2.
\end{gather}

Notice that $\{a_n\}_{n\in \N^*}$ is uniquely determined by the value of the function $\phi$, while the sequences $\{g_n\}_{n\in \N^*}$ and   $\{q_n\}_{n\in \N^*}$ are independent of the choice of  $\{K_n\}_{n\in \N^*}$. Hence any  sequence  $\{K_n\}_{n\in \N^*}$ determines the value of  $\{
h_n\}_{n\in \N^*}$, thus the operator $T$, and such operator $T$ (at least formally) satisfies the equation \eqref{formal--1}. 

\subsubsection{Existence}
The following lemma is devoted to the properties of  $\{g_n\}_{n\in \N^*}$,   to the properties of   $\{q_n\}_{n\in \N^*}$ provided some suitable assumption on $\{a_n\}_{n\in \N^*}$, and to the properties of the transformation $T$ provided some assumption on both $\{a_n\}_{n\in \N^*}$ and $\{K_n\}_{n\in \N^*}$.  }

\begin{lem}
\label{lem-rbheat-1} Let $m\geq 0$. Let $a_n\neq 0$ such that $c n^{-m}< |a_n|<C n^{-m}$.  Let $\lambda\notin \mathcal{N}$. The following properties hold:
\begin{itemize}
    \item [(1)] $\{g_n\}_{n\in \N^*}$ is a Riesz basis of $L^2_1$.\\
    
 \item [(2)] Let $s\in (-3/2, 3/2)$. Then 
$\{n^{-s} g_n\}_{n\in \N^*}$ is a Riesz basis of $H^{s}_1$.\\
     
      \item [(3)] 
      Let $s\in (-3/2, 3/2)$. Then 
$\{n^{-s} q_n\}_{n\in \N^*}$ is a Riesz basis of $H^{m+s}_1$.\\
      
      \item[(4)] Let $m\geq 0$. Let  $s\in (-3/2, 3/2)$.  If $K_n:= \langle f_{n},K\rangle$ is chosen in such a way that $|K_n|<  C n^m$, then   the transformation $T:H^{m+s}_1\rightarrow H^{m+s}_1$ is bounded. \\
      Moreover, if $c n^m <|K_n|<  C n^m$, then the transformation $T:H^{m+s}_1\rightarrow H^{m+s}_1$ is an isomorphism.\\
     \end{itemize}
     Let us remark here that all the choices of $s$ and $r$ in the above are sharp.
     \end{lem}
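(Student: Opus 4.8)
The plan is to establish the four items of Lemma \ref{lem-rbheat-1} in order, exploiting at each stage the criterion of Lemma \ref{lem-cri-riesz} (quadratic closeness plus either density or $\omega$-independence) and the isomorphism-invariance of Riesz bases from Lemma \ref{lem-iso-riesz-equi}.

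\textbf{Step (1): $\{g_n\}$ is a Riesz basis of $L^2_1$.} First I would isolate the diagonal term: write $g_n = \frac{1}{\lambda}\,f_n \cdot\frac{\lambda}{\lambda} + \big(g_n - c_n f_n\big)$ for the natural normalization, i.e.\ observe that the coefficient of $f_p$ in $g_n$ is $\frac{1}{p^2-n^2+\lambda}$, so the $p=n$ coefficient is $\frac1\lambda$, and for $p\neq n$ one has $\big|\frac{1}{p^2-n^2+\lambda}\big| \lesssim \frac{1}{|p-n|\,(p+n)}$. Hence $\sum_n \|\lambda g_n - f_n\|_{L^2_1}^2 = \sum_n \sum_{p\neq n}\frac{\lambda^2}{(p^2-n^2+\lambda)^2} < +\infty$, the double sum converging because $\sum_{p\neq n}\frac{1}{(p^2-n^2+\lambda)^2}$ is $O(1/n)$ uniformly (split into $|p-n|\le n/2$ and $|p-n|>n/2$). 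So $\{\lambda g_n\}$ is quadratically close to the orthonormal basis $\{f_n\}$. It then remains to check $\omega$-independence: if $\sum_n c_n g_n = 0$ with $\{c_n\}\in\ell^2$, projecting onto $f_p$ gives $\sum_n \frac{c_n}{p^2-n^2+\lambda} = 0$ for every $p$; I would argue, as is standard for such Cauchy-like kernels (cf.\ the argument already spelled out after Lemma \ref{lem-cri-riesz}), that this forces $c_n\equiv 0$ — e.g.\ by viewing $\sum_n \frac{c_n}{z-n^2+\lambda}$ as a meromorphic function of $z$ vanishing on $\{p^2\}$ and controlling its growth, or by a direct contradiction extracting the largest-index nonzero coefficient. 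Lemma \ref{lem-cri-riesz} then yields the Riesz basis property.

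\textbf{Steps (2) and (3): transport to Sobolev scales.} For (2), the map $\Lambda^{-s}: f_n \mapsto n^{-s} f_n$ (with the harmless convention at the finitely many small $n$) is by definition an isometric isomorphism $L^2_1 \to H^s_1$, and $\Lambda^{-s} g_n = n^{-s} g_n$; Lemma \ref{lem-iso-riesz-equi} applied to (1) gives that $\{n^{-s} g_n\}$ is a Riesz basis of $H^s_1$ for every $s\in\R$ (the restriction $s\in(-3/2,3/2)$ is dictated by the regularity $g_n\in H^{3/2-}$, which is what makes $n^{-s}g_n$ actually lie in $H^s_1$ with the right scaling — for $s\ge 3/2$ the vectors $n^{-s}g_n$ are not even in $H^s_1$ uniformly). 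For (3), note $q_n = \sum_p \frac{a_p f_p}{p^2-n^2+\lambda}$ differs from $g_n$ only by the multiplier $a_p$; since $c\,p^{-m} < |a_p| < C\,p^{-m}$, the diagonal Fourier multiplier $M: f_p \mapsto a_p f_p$ is an isomorphism $H^{m+s}_1 \to H^s_1$, and $M q_n = \sum_p \frac{a_p^2 f_p}{p^2-n^2+\lambda}$ — hmm, that is not quite $g_n$, so instead I would go the other way: the multiplier $\tilde M: f_p\mapsto a_p^{-1} f_p$ is an isomorphism $H^s_1 \to H^{m+s}_1$ sending $q_n \mapsto g_n$? No — $\tilde M q_n = \sum_p \frac{f_p}{p^2-n^2+\lambda} = g_n$. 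Exactly. So $q_n = \tilde M^{-1} g_n = M g_n$ with $M$ the isomorphism $H^s_1 \to H^{m+s}_1$, $f_p\mapsto a_p f_p$; then $n^{-s} q_n = M(n^{-s} g_n)$, and Lemma \ref{lem-iso-riesz-equi} applied to (2) gives that $\{n^{-s} q_n\}$ is a Riesz basis of $H^{m+s}_1$ for $s\in(-3/2,3/2)$.

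\textbf{Step (4): boundedness and invertibility of $T$.} Recall $T f_n = h_n = -K_n q_n$. Given $\varphi \in H^{m+s}_1$, expand $\varphi = \sum_n \varphi_n f_n$ with $\sum_n n^{2(m+s)}\varphi_n^2<\infty$; then formally $T\varphi = \sum_n \varphi_n h_n = \sum_n (-K_n \varphi_n) q_n$. By (3), $\{n^{-s} q_n\}$ is a Riesz basis of $H^{m+s}_1$, so $\|T\varphi\|_{H^{m+s}_1}^2 \asymp \sum_n n^{2s}\,|K_n \varphi_n|^2$; under $|K_n| < C n^m$ this is $\lesssim \sum_n n^{2(m+s)}\varphi_n^2 = \|\varphi\|_{H^{m+s}_1}^2$, giving boundedness (and the series converges, so $T$ is genuinely well-defined on $H^{m+s}_1$, not just on $\mathcal S_1$). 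Under the two-sided bound $c n^m < |K_n| < C n^m$ we get $\|T\varphi\|_{H^{m+s}_1}^2 \asymp \|\varphi\|_{H^{m+s}_1}^2$, so $T$ is bounded below; surjectivity follows because $\{K_n^{-1} \cdot\}$ composed with the expansion in the Riesz basis $\{n^{-s}q_n\}$ produces a bounded inverse — concretely, for $\psi\in H^{m+s}_1$ write $\psi = \sum_n b_n (n^{-s} q_n)$ with $\{b_n\}\in\ell^2$, and set $\varphi = \sum_n (-b_n n^{-s}/K_n) f_n$, which lies in $H^{m+s}_1$ since $|n^{-s}/K_n|\asymp n^{-m-s}$ and satisfies $T\varphi = \psi$. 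Hence $T$ is an isomorphism of $H^{m+s}_1$. The sharpness of the ranges $s\in(-3/2,3/2)$ (and, where $r$ appears, the analogous interval) is exactly the failure of $g_n\in H^{3/2}$ noted above together with the fact that at $s=\pm 3/2$ the quadratic-closeness/Riesz estimates degenerate.

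\textbf{Main obstacle.} The one genuinely non-routine point is the $\omega$-independence (equivalently, minimality/density) in Step (1): proving that $\sum_n \frac{c_n}{p^2-n^2+\lambda}=0$ for all $p$ forces $c_n\equiv0$ for $\{c_n\}\in\ell^2$. Everything else is bookkeeping with diagonal multipliers and the two abstract lemmas already available; this uniqueness statement is where the real analytic work (a complex-analytic interpolation/growth argument, or an explicit biorthogonal family built from the kernel $\prod(1-z/(p^2+\lambda))$) must be done, and it is also the step that pins down why $\lambda\notin\mathcal N$ is exactly the right hypothesis.
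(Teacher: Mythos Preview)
Your Steps (3) and (4) are correct and match the paper's proof. Step (1) is on the right track, though your sketch for $\omega$-independence differs from the paper's argument (see below). The serious problem is in Step (2).

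\textbf{The error in Step (2).} Your claim that the Fourier multiplier $\Lambda^{-s}: f_p\mapsto p^{-s}f_p$ satisfies $\Lambda^{-s}g_n = n^{-s}g_n$ is false. Since $g_n = \sum_p \frac{f_p}{p^2-n^2+\lambda}$, applying $\Lambda^{-s}$ gives $\sum_p \frac{p^{-s}f_p}{p^2-n^2+\lambda}$, not $n^{-s}\sum_p \frac{f_p}{p^2-n^2+\lambda}$. The multiplier acts on the summation index $p$, not on the label $n$. So Lemma \ref{lem-iso-riesz-equi} does not transport (1) to (2) for free, and this is exactly why the range $s\in(-3/2,3/2)$ is not merely the membership condition $g_n\in H^{3/2-}$ but a genuine constraint on both sides.

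The paper instead proves (2) directly, by redoing both halves of the argument in $H^s_1$: it shows $\{n^{-s}g_n\}$ is quadratically close to $\{\lambda^{-1}n^{-s}f_n\}$ in $H^s_1$ via a case-by-case analysis of $\sum_n\sum_{p\neq n}\frac{p^{2s}n^{-2s}}{(p^2-n^2+\lambda)^2}$ over the ranges $s\in(-1,0)$, $s=-1$, $s\in(-3/2,-1)$, and $s\in(0,3/2)$ separately, and then reruns the $\omega$-independence/density dichotomy in $H^s_1$. The sharpness of $|s|<3/2$ emerges from the divergence of these double sums at the endpoints.

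\textbf{On Step (1), $\omega$-independence.} The paper does not prove $\omega$-independence outright. Instead it proves the disjunction ``$\omega$-independent \emph{or} dense'', which is all Lemma \ref{lem-cri-riesz} requires. Assuming a nontrivial relation $\sum_n c_n g_n=0$, it uses the identity $(-\Delta)^{-1}g_n = k_n g_n - k_n(-\Delta)^{-1}h$ with $k_n=(n^2-\lambda)^{-1}$ and $h=\sum_p f_p$, iterates to get $\sum_n c_n k_n^m g_n \in \mathrm{span}\{(-\Delta)^{-i}h\}$, and then a holomorphic-function argument on $G(z)=\sum_n c_n k_n e^{k_n z}$ forces either all $c_n=0$ (contradiction) or $(-\Delta)^{-m}h\in\overline{\mathrm{span}}\{g_n\}$ for all $m$, which gives density. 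Your proposed meromorphic-function approach (viewing $\sum_n c_n/(z-n^2+\lambda)$ as vanishing on $\{p^2\}$) may be workable, but controlling the growth so as to conclude $F\equiv 0$ is not routine for $\{c_n\}\in\ell^2$, and you have not supplied that control.
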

    
We give here the key ideas of the proof. The rigorous proof itself is detailed below. 
\begin{itemize}
\item In order to show that $\{g_n\}_{n\in \N^*}$ is a Riesz basis of $L^2_1$ we show first that it is quadratically close to $\{f_{n}/\lambda\}_{n\in \N^*}$ --which is obviously an orthonormal basis of $L^{2}_{1}$-- and then we show that $\{g_n\}_{n\in \N^*}$ is either $\omega$-independent or dense in $L^{2}_{1}$ and we conclude using Lemma \ref{lem-cri-riesz}. Showing that $\{g_n\}_{n\in \N^*}$  is quadratically close to $\{f_{n}/\lambda\}_{n\in \N^*}$ amounts to show that
\begin{equation}\label{quaappro0}
    \left(\sum_{p\in \N^*} \frac{f_p}{\lambda_n-\lambda_p+\lambda}\right)_{n\in \N^*} \textrm{ is quadratically close to } \left(\frac{f_n}{\lambda}\right)_{n\in \N^*}\text{ in }L_{1}^{2}.
\end{equation}
For this, it suffices to show that
\begin{equation}\label{eq:sumclose0}
    \sum_{n\in\mathbb{N}^{*}} \sum_{p\neq n} \left(\frac{1}{p^2+\lambda-n^2}\right)^2<+\infty,
\end{equation}
which can be done by cases (see \eqref{trace1}--\eqref{eq:445} below) by using that for $\lambda\in\mathbb{N}^{*}$ and any $n>\lambda$ and $p<n$ there exists $j\in\{1,...,n-1\}$ such that $p =( n-j)$ and 
\begin{equation}\label{eq:manipj0}
n^2-(n-j)^2-\lambda\geq (n^2-(n-j)^2)/2.
\end{equation}
One can look at \eqref{eq:manipj}--\eqref{end-manip} for more details,  and Remark \ref{rem:lambda-np} for general value of $\lambda$.
\item Showing that $\{g_n\}_{n\in \N^*}$ is either $\omega$-independent or dense in $L^{2}_{1}$ can be done by noticing first that
    \begin{equation}
    \mathcal{A}^{-1}g_n= \Delta^{-1}g_{n}= (n^2-\lambda)^{-1} g_n-(n^2-\lambda)^{-1}\mathcal{A}^{-1}h,
\end{equation}
and assuming that $\{g_n\}_{n\in \N^*}$ is not $\omega$-independent (otherwise the proof is done). Then, we deduce the existence of 
\begin{equation}
    \sum_{n\in \N^*} c_n g_n=0, \textrm{ in } L^2_1.
\end{equation}
The preceding formula is well-defined, in fact, thanks to \eqref{quaappro0},
\begin{equation}
    \sum_{n\in \N^*} c_n g_n=  \sum_{n\in \N^*} c_n \frac{f_n}{\lambda}+ \sum_{n\in \N^*} c_n \left(g_n- \frac{f_n}{\lambda}\right)
\end{equation}
converges in $L^2_1$.
Next,  by applying $\mathcal{A}^{-1}$ to this equality we conclude
\begin{equation}
    \sum_{n\in \N^*} c_n k_n g_n= \sum_{n\in \N^*} c_n k_n \mathcal{A}^{-1} h, \textrm{ in } L^2_1,
\end{equation}
where we have used the fact $\sum_n c_n k_n$ converges. We then iterate and show (see \eqref{eq:448}--\eqref{eq:452} for more details) that
\begin{equation}
      \sum_{n\in \N^*} c_n k_n^m g_n= \sum_{i=1}^m C_{m+1-i}  \mathcal{A}^{-i} h,
\end{equation}
with
\begin{equation}
\label{defCl}
    C_{l}:= \sum_{n\in \N^*} c_n k_n^{l}< +\infty.
\end{equation}
From this point, there are only two possibilities: either there exists $m\geq 1$ such that $C_{m}\neq 0$ and we can show that $\{g_n\}_{n\in \N^*}$ is dense (see \eqref{eq:455}--\eqref{spangndense} below concerning what is referred as ``First case" and ``Second case"), or $C_{m}= 0$ for any $m\in\mathbb{N}^{*}$ and we get contradiction using that the complex function
\begin{equation}
    \tilde{G}(z) =\sum_{n\in \N^*} c_n k_n e^{k_{n}z}
\end{equation}
is holomorphic (and hence identically equal to 0 from \eqref{defCl}).
\item Showing that   $\{n^{-s} g_n\}_{n\in \N^*}$ is a Riesz basis of $H^{s}_1$ is  done in a similar fashion as the $s=0$ case. Then, showing that $\{n^{-s}q_{n}\}_{n\in\mathbb{N}^{*}}$ is a Riesz basis of $H_{1}^{m+s}$ is done by using that
\begin{equation}
    \tau: n^{-s}f_{n}\rightarrow n^{-s} a_{n}f_{n}
\end{equation}
is an isomorphism from $H^{s}$ to $H^{s+m}$ mapping $g_{n}$ to $q_{n}$.
\item Finally, assuming that $c n ^{m}<K_{n}<C n^{m}$, we know that $\{\frac{n^{-s}f_n}{n^m}\}_{n\in \N^*}$ is an orthonormal basis of $H^{m+s}_1$ and because $\{n^{-s} q_n\}_{n\in \N^*}$ is a Riesz basis of $H^{m+s}_1$, we deduce from the assumptions on $K_{n}$ and the previous point that $\{\frac{-K_n}{n^m} (n^{-s}q_n)\}_{n\in \N^*}$ is also a Riesz basis of $H^{m+s}_1$. From Lemma \ref{lem-iso-riesz-equi} this means that $T$ is an isomorphism from $H_{1}^{m+s}$ to itself.
\end{itemize}

\begin{proof}[Proof of Lemma \ref{lem-rbheat-1}]

For ease of notations, in the proof of this lemma  we fix $$\lambda=N= 4M+2,$$ which guarantees the fact that $p^2+\lambda-n^2\neq 0$. However, all the results hold with similar calculation for any $\lambda$ outside the special subset $\mathcal{N}$, which in particular is guaranteed by Remark \ref{rem:lambda-np} stated below.\\

\noindent {\bf \textcolor{black}{(1) $\{g_n\}_{n\in \N^*}$ is a Riesz basis of $L^2_1$.}}
\textcolor{black}{We proceed in two steps.}
We start by showing  that $\{g_n\}_{n\in \N^*}$ is quadratically close to a Riesz basis in $L^{2}_{1}$. \textcolor{black}{Then we show that it is $\omega$-independent or  dense in $L^{2}_{1}$, which, together with the quadratically close behavior, ensures that it is a Riesz basis of $L^{2}_{1}$.}
\begin{equation}\label{quaappro}
    \left(\sum_{p\in \N^*} \frac{f_p}{\lambda_n-\lambda_p+\lambda}\right)_{n\in \N^*} \textrm{ is quadratically close to } \left(\frac{f_n}{\lambda}\right)_{n\in \N^*}\textcolor{black}{\text{ in }L_{1}^{2}}.
\end{equation}
It suffices to show that 
\begin{equation}\label{eq:sumclose}
    \sum_{n\in\mathbb{N}^{*}} \sum_{p\neq n} \left(\frac{1}{p^2+\lambda-n^2}\right)^2<+\infty.
\end{equation}

Thus it further suffices to prove  \begin{equation}\label{ine-nN}
    \sum_{n>N} (\sum_{p> n}+ \sum_{p< n}) \left(\frac{1}{p^2+\lambda-n^2}\right)^2<+\infty.
\end{equation}
as well as
\begin{equation}\label{trace1}
   I:=  \sum_{n\leq N} (\sum_{p> n}+ \sum_{p< n}) \left(\frac{1}{p^2+\lambda-n^2}\right)^2<+\infty:
\end{equation}

We can express $I$ in the following fashion,
\begin{equation}
    I= \sum_{j= p^2+\lambda-n^2, \; n<N} \frac{1}{j^2}, \textrm{ counting multiplicity of } j,
\end{equation}
for any possible $j$ the multiplicity count at most as N, thus 
\begin{equation}
    I< N \sum \frac{1}{j^2}<+\infty.
\end{equation}

For the first part of \eqref{ine-nN} as $p>n$, we have  
\begin{align*}
     \sum_{n>N} \sum_{p> n} \left(\frac{1}{p^2+\lambda-n^2}\right)^2&\leq  \sum_{n=N} \sum_{k=1 } \left(\frac{1}{k^2+ 2kn}\right)^2, \\
     &\leq  \sum_{n=1} \sum_{k=1 } \left(\frac{1}{kn}\right)^2<+\infty.
\end{align*}

For the second part as $p<n$, \textcolor{black}{
there exists $j\in\{1,...,n-1\}$ such that $p=(n-j)$ and} we know that \textcolor{black}{for any such $j$}
\begin{equation}\label{eq:manipj}
n^2-(n-j)^2-\lambda\geq (n^2-(n-j)^2)/2.
\end{equation}
\textcolor{black}{Indeed, using that $\lambda =N$.
\begin{equation}
\begin{split}
    n^2-(n-j)^2-\lambda- (n^2-(n-j)^2)/2&=n^2/2-(n-j)^2/2-N,\\
    &\geq jn-j^{2}/2-N,
\end{split}
\end{equation}
and the right-hand side is a second order polynomial whose minimum is achieved either for $j=1$ or for $j=n-1$. As $n\geq N+1$,
\begin{equation}
\label{end-manip}
\begin{split}
        n^2-(n-j)^2-\lambda- (n^2-(n-j)^2)/2&\geq 1/2\\
        &\geq 0.
        \end{split}
\end{equation} }

\begin{remark}\label{rem:lambda-np}
For the general case where $\lambda\notin \mathcal{N}$, there exists $C(\lambda)>0$ such that 
\begin{equation}
    |p^2+\lambda-n^2|\geq C(\lambda)|p^2- n^2|, \; \forall p, n\in \N^*.
\end{equation}
Indeed, as for $p\geq n$ we always have  $|p^2+\lambda-n^2|\geq |p^2- n^2|$, it suffices to consider the case that $p\leq n-1$. Thus, it is  equivalent to show that for $p\leq n-1$, 
\begin{equation}\label{ine-pnlam-c}
    \frac{ |p^2+\lambda-n^2|}{n^2-p^2}> C(\lambda).
\end{equation}
For $n\geq \lambda+1$ and $p\leq n-1$, we know that 
\begin{equation}
    \frac{ |p^2+\lambda-n^2|}{n^2-p^2}= \frac{ n^2-p^2-\lambda}{n^2-p^2}>  \frac{ n^2-p^2-n}{n^2-p^2}>  \frac{1}{2}.
\end{equation}
For $n\leq \lambda$ and $p\leq n-1$ containing finitely many pairs, thanks to the definition of $\lambda$, it is clear that such $C(\lambda)$ exists. 
\end{remark} 
Thus

\begin{align*}
 \sum_{n>N} \sum_{p< n} \left(\frac{1}{p^2+\lambda-n^2}\right)^2&=
     \sum_{n>N} \sum_{k=1}^{n-1} \left(\frac{1}{k^2+\lambda-n^2}\right)^2,  \\
     &\leq  4\sum_{n>N} \sum_{k=1}^{n-1} \left(\frac{1}{n^2-k^2}\right)^2, \\
    &\leq  4\sum_{n>N} \sum_{k=1}^{n-1} \left(\frac{1}{n(n-k)}\right)^2,\\
    &\leq  4\sum_{n>N} \sum_{\textcolor{black}{j}=1}^{n-1} \left(\frac{1}{n\textcolor{black}{j}}\right)^2< +\infty.
\end{align*}

Let us denote 
\begin{equation}\label{eq:445}
    g_n:= \sum_{p\in \N^*} \frac{f_p}{\lambda_n-\lambda_p+\lambda}=  \sum_{p\in \N^*} \frac{f_p}{-n^2+ p^2+\lambda}\in H^{3/2-}_{\textcolor{black}{1}}.
\end{equation}
Then
\begin{equation}\label{res:gn}
    (-\Delta + \lambda- n^2)g_n= \sum_{p\in \N^*} f_p= \frac{1}{2} \cot{\frac{x}{2}}=: h, \textrm{ in } H^{-1}_{\textcolor{black}{1}}.
\end{equation}

The following proof to show that $g_n$ is either $\omega$-independent or dense in $L^2_1$  is inspired by \cite{CoronLu14}, though even the transformation type that is adapted here is slightly different from the one given in \cite{CoronLu14}. 

Recalling that $\mathcal{A}:= -A= -\Delta$ and defining
$k_n:= (n^2-\lambda)^{-1}$,  from \eqref{res:gn} we notice that 
\begin{equation}
    \mathcal{A}^{-1}g_n= k_n g_n-k_n \mathcal{A}^{-1}h.
\end{equation}

If  $\{g_n\}_{n\in \N^*}$ is  $\omega$-independent then we conclude the proof.  Suppose that $\{g_n\}_{n\in \N^*}$ is not $\omega$-independent, thus by the definition there exists a nontrivial sequence $\{c_n\}_{n\in \N^*}$ belongs to $\textcolor{black}{l^2}(\N^{*})$ 
such that 
\begin{equation}\label{eq:448}
    \sum_{n\in \N^*} c_n g_n=0, \textrm{ in } L^2_1.
\end{equation}
The preceding formula is well-defined, in fact, thanks to \eqref{quaappro} that we just proved,
\begin{equation}
    \sum_{n\in \N^*} c_n g_n=  \sum_{n\in \N^*} c_n \frac{f_n}{\lambda}+ \sum_{n\in \N^*} c_n \left(g_n- \frac{f_n}{\lambda}\right)
\end{equation}
converges in $L^2_1$.

Next,  by applying $\mathcal{A}^{-1}$ to this equality we conclude
\begin{equation}
    \sum_{n\in \N^*} c_n k_n g_n= \sum_{n\in \N^*} c_n k_n \mathcal{A}^{-1} h, \textrm{ in } L^2_1,
\end{equation}
where we have used the fact $\sum_n c_n k_n$ converges.

Then, by applying again $\mathcal{A}^{-1}$ to this equality we get that in $L^2_1$ space,
\begin{equation}
      \sum_{n\in \N^*} c_n k_n^2 g_n= \sum_{n\in \N^*} c_n k_n^2 \mathcal{A}^{-1} h+ c_nk_n \mathcal{A}^{-2}h,
\end{equation}
and applying it again we have still in $L^2_1$ space,
\begin{equation}
      \sum_{n\in \N^*} c_n k_n^3 g_n= \sum_{n\in \N^*} c_n k_n^3 \mathcal{A}^{-1} h+ c_n k_n^2 \mathcal{A}^{-2}h+ c_n k_n \mathcal{A}^{-3}h.
\end{equation}
By induction we easily arrive at, for any $m\in \N^*$,
\begin{equation}
      \sum_{n\in \N^*} c_n k_n^m g_n= \sum_{i=1}^m \left(\sum_{n\in \N^*} c_n k_n^{m+1-i} \mathcal{A}^{-i} h\right)= \sum_{i=1}^m C_{m+1-i}  \mathcal{A}^{-i} h,
\end{equation}
where 
\begin{equation}\label{eq:452}
    C_{l}:= \sum_{n\in \N^*} c_n k_n^{l}< +\infty.
\end{equation}
\textcolor{black}{Let us now proceed by cases:}

\textcolor{black}{\textbf{- First case: $C_1\neq 0$}. T}hen we conclude from the proceding equation that for all $m\in \N^*$ we have that  $A^{-\textcolor{black}{m}}h\in $ span$\{g_n\}_{n\in \N^*}$. Suppose that  span$\{g_n\}_{n\in\mathbb{N}^{*}}$ is not dense in $L^2_1$, then we can find $d=\sum d_n f_n\in L^2_1$ (thus $\{d_n\}_{n\in \N^*}\in\ell^2(\N^*) $) such that \textcolor{black}{$d\neq 0$ and }
\begin{equation}\label{eq:455}
    \langle g, d\rangle_{L^2_1}=0,\;  \forall g\in \textrm{span} \{g_n\}_{n\in\mathbb{N}^{*}},
\end{equation}
which in particular yields,
\begin{equation}
    \langle \mathcal{A}^{-\textcolor{black}{m}}h, d\rangle_{L^2_1}=0, \forall \textcolor{black}{m}\in \N^*.
\end{equation}
Recalling that $h=\sum f_n\in H^{-1}_{\textcolor{black}{1}}$, we get that 
\begin{equation}\label{dnsum0}
   \sum_n d_n \frac{1}{n^{2\textcolor{black}{m}}}=0, \forall \textcolor{black}{m}\in \N^*.
\end{equation}
By defining the complex function 
\begin{equation}
    G(z):= \sum_{n\in \N^*} d_n n^{-2} e^{n^{-2} z}, \forall z\in \mathbb{C}.
\end{equation}
By checking that the series expansion of the right-hand side is absolutely convergent, we deduce that this function is holomorphic. For example,   for any $z\in \mathbb{C}$, the following series is absolutely convergent,
\begin{align*}
   \sum_{n\in \N^*} d_n n^{-2} e^{n^{-2} z}=  \sum_{n\in \N^*} d_n n^{-2} \sum_{j\geq 0} \frac{n^{-2j}}{j!} z^j= \sum_{j\geq 0} \frac{1}{j!}\sum_{n\in \N^*} (d_n n^{-2}) (n^{-2j} z^j).
\end{align*}
Similar calculation yields the absolute convergence of $G^{\textcolor{black}{(m)}}(z)$, \textcolor{black}{for any $m\in\mathbb{N}$}.

From \eqref{dnsum0} we know that $G^{\textcolor{black}{(m)}}(0)= 0, m\in \N$. Thus $G=0$, and further $d_n=0$, which leads to a contradiction. Therefore
\begin{equation}\label{spangndense}
   \textrm{span} \{g_n\}_{n\in \N^*} \textrm{ is  dense in }L^2_1.
\end{equation}

\textcolor{black}{\textbf{- Second case: there exists $m>1$ such that $C_m\neq 0$}, without loss of generality we can assume that $m$} is the first integer such that $C_m\neq 0$. We can also conclude that   $A^{-m}h\in $ span$\{g_n\}_{n\in \N^*}$. The same reasoning as above proves again \eqref{spangndense}.\\

\textcolor{black}{\textbf{- Third case: for all $l\in \N^*$ we have $C_l=0$.} T}hen we set complex function
\begin{equation}
    \tilde{G}(z):= \sum_{n\in \N^*} c_n k_n e^{k_n z}.
\end{equation}
This function is holomorphic and satisfies that $\tilde{G}^{(m)}(0)=0$ \textcolor{black}{for any $m\in\mathbb{N}$, thus as previously $\tilde{G}= 0$ and therefore} \textcolor{black}{$c_n=0$ for all $n\in\mathbb{N}^{*}$}, which is in contradiction with the choice of $\{c_n\}_{n\in \N^*}$.

Consequently, 
\begin{equation}\label{comorind}
   \textrm{$\{g_n\}$ is either $\omega$ independent in $L^2_1$ or  dense in  $L^2_1$.} 
\end{equation}

Properties \eqref{quaappro} and \eqref{comorind}, together with Lemma \ref{lem-cri-riesz} (which is also \cite[Theorem 3.2 and Theorem 3.3]{CGM}), lead to the proof of Lemma \ref{lem-rbheat-1} (1).\\

\noindent {\bf (2) \textcolor{black}{$\{n^{-s} g_n\}_{n\in \N^*}$ is a Riesz basis of $H^{s}_1$.}} 

\textcolor{black}{As in (1),} we first show that $\{n^{-s} g_n\}_{n\in \N^*}$ is quadratically close to $\{\frac{n^{-s} f_n}{\lambda}\}_{n\in \N^*}$ in $\textcolor{black}{H^{s}_{1}}$, and then prove that  $\{n^{-s}g_n\}_{n\in \N^*}$ is $\omega$-independent in $H^s_1$ or dense in  $H^s_1$  to conclude the proof.\\

{\bf $\star$ Quadratically close.}

We notice that $s=0$ is exactly the case of {\bf (1)}.

i) If $s=-1$ then 
\begin{align*}
    \sum_{n}\left\|n g_n-\frac{n f_n}{\lambda}\right\|_{H^{-1}}^2&=  \sum_{n}\left\|\sum_{p\neq n} \frac{n f_p}{p^2+\lambda-n^2}\right\|_{H^{-1}}^2, \\
    &= \sum_{n}\sum_{p\neq n} \frac{n^2 }{p^2(p^2+\lambda-n^2)^2}, \\
     &= \sum_{n}\left(\sum_{p< n}+ \sum_{p> n}\right) \frac{n^2 }{p^2(p^2+\lambda-n^2)^2}, \\
     &\leq C+ \sum_{n}\sum_{p< n}\frac{n^2 }{p^2(p^2+\lambda-n^2)^2}, \\
      &\leq C+ \sum_{n\geq N}\sum_{p< n}\frac{n^2 }{p^2(n^2-p^2-\lambda)^2}, \\
       &\leq C+ 4\sum_{n\geq N}\sum_{p< n}\frac{n^2}{p^2(n^2-p^2)^2}, \\
       &\leq C+ C \sum_{n\geq N}\sum_{p< n}\frac{n^2 }{p^2n^2(n-p)^2 } , \\
        &\leq C+ C \sum_{n\geq N}\sum_{p< n}\frac{1}{n^2} \left(\frac{1}{p}+ \frac{1}{n-p}\right)^2, \\
         &\leq C+ C \sum_{n\geq N}\sum_{p< n}\frac{1}{n^2} \left(\frac{1}{p^2}+ \frac{1}{(n-p)^2}\right),\\
          &\leq C+ C \sum_{n\geq N}\sum_{p< n}\frac{1}{p^2n^2}  <+\infty,
\end{align*}
\textcolor{black}{where $C$ is a constant that can change between lines and where used \eqref{eq:sumclose}, \eqref{eq:manipj}, the fact that $\lambda=N$, and
\begin{equation}
    \frac{1}{(n-p)^{2}p^{2}}=\left(\frac{1}{n}(\frac{1}{n-p}+ \frac{1}{p})\right)^2\leq \frac{2}{n^{2}}\left(\frac{1}{(n-p)^{2}}+\frac{1}{p^{2}}\right).
\end{equation}
}

ii) From the preceding proof we easily conclude the case $s\in (-1, 0)$ \textcolor{black}{by using that $(n/p)^{-s}\leq (n/p)$ for $0<p<n$}.

iii) If $s\in (-3/2, -1)$, it suffices to show that 
\[\sum_{n\geq N}\sum_{p< n}\frac{1 }{(n-p)^2 n^2}\frac{n^{-2s}}{p^{-2s}}<+\infty.\]
\textcolor{black}{since the case $p>n$ follows directly from \eqref{eq:sumclose} and $(n/p)^{-2s}\leq 1$.}
For $p<n/2$, we have 
\begin{align*}
    \sum_{n\geq N}\sum_{p< n/2}\frac{1 }{(n-p)^2 n^2}\frac{n^{-2s}}{p^{-2s}}&\leq C  \sum_{n\geq N}\sum_{p< n/2}\frac{1 }{ n^4}\frac{n^{-2s}}{p^{-2s}}\\
    &    \leq C \sum_{n\geq N}\frac{1 }{ n^{4+2s}}\sum_{p< n/2}\frac{1}{p^{-2s}}\\
    &\textcolor{black}{\leq C \left(\sum_{n\geq N}\frac{1 }{ n^{4+2s}}\right)\left(\sum_{p\in\mathbb{N}^{*}}\frac{1}{p^{-2s}}\right)}\\
    &<+\infty.
\end{align*}
For $n/2\leq p<n$, we have
\begin{equation}
     \sum_{n\geq N}\sum_{n/2\leq p< n}\frac{1 }{(n-p)^2 n^2}\frac{n^{-2s}}{p^{-2s}}\leq C  \sum_{n\geq N}\frac{1}{n^2}\sum_{n/2\leq p< n}\frac{1 }{(n-p)^2 }<+\infty.
\end{equation}

iii) If $s=\frac{3}{2}-\varepsilon$ with $\varepsilon\in (0, \frac{3}{2})$, then
\begin{align*}
     \sum_{n}\left\|n^{-s} g_n-\frac{n^{-s} f_n}{\lambda}\right\|_{H^{s}}^2&=  \sum_{n}\left\|\sum_{p\neq n} \frac{n^{-s} f_p}{p^2+\lambda-n^2}\right\|_{H^{s}}^2, \\
     &= \sum_{n}\sum_{p\neq n} \frac{n^{-2s} p^{2s} }{(p^2+\lambda-n^2)^2}, \\
     &= \sum_{n}\left(\sum_{p< n}+ \sum_{n<p<2 n}+ \sum_{p\geq 2n}\right)\frac{1}{(p^2+\lambda-n^2)^2} \frac{p^{2s}}{n^{2s}}, \\
      &\leq C+ \sum_{n} \sum_{p\geq 2n}\frac{1}{(p^2+\lambda-n^2)^2} \frac{p^{2s}}{n^{2s}}, \\
       &\leq C+ \sum_{n} \sum_{p\geq 2n}\frac{1}{(p^2-n^2)^2} \frac{p^{2s}}{n^{2s}},\\
        &\leq C+ C\sum_{n} \sum_{p\geq 2n}\frac{1}{(p^2)^2} \frac{p^{2s}}{n^{2s}},\\
        &\leq C+ C \sum_{n} \frac{1}{n^{2s}} \sum_{p\geq 2n}\frac{1}{p^{4-2s}}.
\end{align*}
Because $s\in (0, 3/2)$, we know that 
\[
\sum_{p\geq 2n}\frac{1}{p^{4-2s}}\leq C \frac{1}{n^{3-2s}}\]
\textcolor{black}{and therefore $\left(\frac{1}{n^{2s}} \sum_{p\geq 2n}\frac{1}{p^{4-2s}}\right)_{n\in\mathbb{N}^{*}}$} is absolutely convergent.  

This concludes the proof of the quadratically close behavior.\\

{\bf $\star$ Let $s\in (-3/2, 3/2)$. $\{n^{-s}g_n\}_{n\in \N^*}$ is $\omega$-independent in $H^s_1$ or dense in  $H^s_1$. }

Similar to the case that $s=0$.   Suppose that there exists $\{c_n\}_{n\in \N^*}\in\ell^2(\N^{*})$ such that 
\begin{equation}
    \sum_{n\in \N^*} \frac{c_n}{n^s} g_n=0, \textrm{ in } H^s_1,
\end{equation}
which is well defined as
\begin{equation}
    \sum_{n\in \N^*} c_n n^{-s} g_n=  \sum_{n\in \N^*} c_n n^{-s} \frac{f_n}{\lambda}+  \sum_{n\in \N^*} c_n n^{-s} \left(g_n-\frac{f_n}{\lambda}\right)
\end{equation}
converges in $H^s_1$ space thanks to the fact that $s\in (-3/2, 3/2)$, and the quadratically close result in $H^s_1$ that we just proved.
  By the same reasoning as the case $s=1$, we get
\begin{equation}
    \sum_{n\in \N^*} \frac{c_n}{n^s} k_n g_n= \sum_{n\in \N^*} \frac{c_n}{n^s} k_n \mathcal{A}^{-1} h, \textrm{ in } H^s_1,
\end{equation}
where we have used the fact $\sum_n \frac{c_n}{n^s} k_n$ converges for $s>-3/2$.

Then, we can further consider 
\begin{equation}
    C_{m}:= \sum_{n\in \N^*} \frac{c_n}{n^s} k_n^{m}<+\infty.
\end{equation}
Exactly the same as in the case $s=0$, it suffices to consider two cases, $C_1\neq 0$, or $C_m=0$ \textcolor{black}{for all $m\in\mathbb{N}^{*}$}.

In the first case, we find that  $\mathcal{A}^{-m}h\in $ span$\{n^{-s} g_n\}_{n\in \N^*}$ in $H^s_1$. Suppose that  span$\{n^{-s}g_n\}_{n\in \N^*}$ is not dense in $H^s_1$, then we can find $d=\sum_n \frac{d_n}{n^s} f_n\in H^s_1$ (thus $\{d_n\}_{n\in \N^*}\in\ell^2(\N^*) $) such that 
\begin{equation}
    \langle \mathcal{A}^{-m}h, d\rangle_{H^s_1}=0,\; \forall m\in \N^*,
\end{equation}
which is well-defined as $h\in H^{-1/2-}_1$ and $\mathcal{A}^{-1}h\in H^{3/2-}_1\subset H^s_1$.
Recalling the exact definition of $h=\sum_n f_n\in H^{-1/2-}_1$, we get that 
\begin{equation}
   \sum_n d_n \frac{n^s}{n^{2m}}=0, \forall m\in \N^*,
\end{equation}
which further implies that $d_n=0$ using the holomorphic function technique.

In the second case where $C_m=0$ \textcolor{black}{for all $m\in\mathbb{N}^{*}$}, we can also prove that $c_n=0$, \textcolor{black}{similarly as in the case $s=0$}, which is in contradiction with the choice of $\{c_n\}_{n\in \N^*}$.
\\

\noindent {\bf (3) \textcolor{black}{$\{n^{-s} q_n\}_{n\in \N^*}$ is a Riesz basis of $H^{m+s}_1$.}} 

Let $s\in (-3/2, 3/2)$.  
We introduce $\tau: H^s_1\rightarrow H^{m+s}_1$ \textcolor{black}{defined by}, 
\begin{equation}
    \tau: n^{-s}f_n\mapsto n^{-s} a_n f_n,
\end{equation}
\textcolor{black}{which is an isomorphism} thanks to the fact that $c n^{-m}< |a_n|< C n^{-m}$. We immediately notice that 
\begin{equation}
    \tau: n^{-s} g_n=n^{-s} \sum_{p\in \N^*} \frac{f_p}{\lambda_n-\lambda_p+\lambda} \mapsto n^{-s} \sum_{p\in \N^*} \frac{a_p f_p}{\lambda_n-\lambda_p+\lambda} =n^{-s}q_n.
\end{equation}
Consequently, thanks to the fact that $\{n^{-s} g_n\}_{n\in \N^*}$ is a Riesz basis of $H^s_1$, from Lemma \ref{lem-iso-riesz-equi} we know that $\{n^{-s} q_n\}_{n\in \N^*}$ is a Riesz basis of $H^{m+s}_1$.\\

\noindent {\bf (4) \textcolor{black}{Boundedness of the transformation $T$ and isomorphism}}

If $K_n \neq 0$ is chosen in such a way that $c n^m< |K_n|< n^m$, then from the definition of $T$ we know that 
\begin{equation}
    T: \frac{n^{-s}f_n}{n^m}\mapsto \frac{-K_n}{n^m} (n^{-s}q_n).
\end{equation}

Since $\{n^{-s}f_n\}_{n\in \N^*}$ is an orthonormal basis of $H^s$, we know that $\{\frac{n^{-s}f_n}{n^m}\}_{n\in \N^*}$ is an orthonormal basis of $H^{m+s}_1$. Moreover, because $\{n^{-s} q_n\}_{n\in \N^*}$ is a Riesz basis of $H^{m+s}_1$, from Lemma \ref{lem-iso-riesz-equi} we know that  $\{\frac{-K_n}{n^m} (n^{-s}q_n)\}_{n\in \N^*}$ is also a Riesz basis of $H^{m+s}_1$. \\
Therefore, the transformation $T$
is an isomorphism from $H^{m+s}_1$ to $H^{m+s}_1$. \textcolor{black}{
When $m=2$, we remark, however, that $T$ is not a isomorphism from $L^2_1$ to itself (indeed $2+s>0$ for any $s\in (-3/2,3/2)$)}.\\

Moreover, if we only assume that $|K_n|\leq C n^m$, then $T$ is a bounded operator from $H^{m+s}_1$ to $H^{m+s}_1$. However, in this case $T$ might not be an isomorphism.\\
\end{proof}
\subsubsection{Smoothing properties}
In this section, we prove some smoothing properties which are not deduced trivially from the Riesz basis properties shown in Lemma \ref{lem-rbheat-1}.

\begin{lem}\label{lem-rbheat-2}
     \begin{itemize}
      \item [(1)] Let $r\in [0, 1/2)$. Then, $q_n$ has the following smoothing property, 
      \begin{equation}
          \sum_{n\in\mathbb{N}^{*}} \|q_n- a_nf_n/\lambda\|_{H^{m+r}_1}^2< +\infty.
      \end{equation}
      
       \item [(2)] Let $r\in [0, 1/2)$.  Similar smoothing effect also holds in the space $H^{-1+m}$,
      \begin{equation}
          \sum_{n\in\mathbb{N}^{*}} \|n(q_n- a_nf_n/\lambda)\|_{H^{-1+m+r}_1}^2< +\infty.
      \end{equation}
      
     \item [(3)] Let $m=0$ and $r\in [0, 1/2)$. Then,
 \begin{equation}
     \sum_{n\in\mathbb{N}^{*}} \left(q_n-\frac{a_n f_n}{\lambda}\right)\in H^r_1.
       \end{equation}
\end{itemize}
Again, all the choices of $s$ and $r$ in the above are sharp.
\end{lem}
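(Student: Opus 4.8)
The plan is to reduce parts~(1)--(3) to elementary estimates on double sums over $\N^*\times\N^*$, in the spirit of the proof of Lemma~\ref{lem-rbheat-1}. The common starting point is the identity
\[
q_n-\frac{a_nf_n}{\lambda}=\sum_{p\neq n}\frac{a_pf_p}{p^2+\lambda-n^2},
\]
valid because the $p=n$ term of $q_n$ is precisely $a_nf_n/\lambda$. For parts~(1) and~(2) I would insert the two-sided bound $c\,p^{-m}\le|a_p|\le C\,p^{-m}$ together with $\|f_p\|_{H^s}^2=p^{2s}$; the powers of $m$ then cancel, and the two claims become, respectively, for $r\in[0,1/2)$,
\[
\sum_{n\in\N^*}\sum_{p\neq n}\frac{p^{2r}}{(p^2+\lambda-n^2)^2}<+\infty
\qquad\text{and}\qquad
\sum_{n\in\N^*}n^2\sum_{p\neq n}\frac{p^{2r-2}}{(p^2+\lambda-n^2)^2}<+\infty .
\]
The matching lower bound $|a_p|\ge c\,p^{-m}$ yields lower bounds of the same shape, so the range $r\in[0,1/2)$ will be seen to be sharp: the near-diagonal terms force both sums to diverge at $r=1/2$.

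To establish the two scalar estimates I would split each inner sum into $\{p>n\}$ and $\{p<n\}$. On $\{p>n\}$ one writes $p=n+k$ with $k\ge1$, bounds the denominator below by $k(2n+k)$, and uses $(n+k)^{2r}\le C(n^{2r}+k^{2r})$ in the first sum and $(n+k)^{2r-2}\le n^{2r-2}$ in the second (legitimate since $2r-2<0$); the $k$-sum is then $O(n^{-2})$, leaving $O\bigl(\sum_n n^{2r-2}\bigr)$. On $\{p<n\}$ one invokes Remark~\ref{rem:lambda-np} to replace $|p^2+\lambda-n^2|$ by $C(\lambda)(n^2-p^2)=C(\lambda)(n-p)(n+p)\ge C(\lambda)\,n(n-p)$, then splits further into $p<n/2$ and $n/2\le p<n$, using $p^{2r}\le n^{2r}$, resp.\ $\sum_p p^{2r-2}<+\infty$ (which requires $r<1/2$), to again land on $O\bigl(\sum_n n^{-2}\bigr)+O\bigl(\sum_n n^{2r-2}\bigr)$. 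This is exactly the bookkeeping already carried out around \eqref{eq:sumclose}--\eqref{end-manip} for $r=0$; both quantities are finite precisely when $r<1/2$, which proves~(1) and~(2), the even subspace $H^{m+r}_2$ being treated identically.

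Part~(3) is of a different nature and is the main difficulty: it asserts convergence of the series $\sum_n v_n$, $v_n:=q_n-a_nf_n/\lambda$, in $H^r_1$, which is strictly stronger than~(1) with $m=0$ since $\|v_n\|_{H^r}\asymp n^{r-1}$ and hence $\sum_n\|v_n\|_{H^r}=+\infty$, so no term-by-term absolute convergence is available. I would argue in two steps. \emph{First}, identify the candidate limit: for each fixed $p$, the $p$-th Fourier coefficient of $S_N:=\sum_{n\le N}v_n$ is $a_p\sum_{n=1,\,n\neq p}^{N}(p^2+\lambda-n^2)^{-1}$, which converges as $N\to\infty$ to $a_pT_p$ with $T_p:=\sum_{n\ge1,\,n\neq p}(p^2+\lambda-n^2)^{-1}$; applying the partial-fraction identity $\sum_{n\ge1}(w^2-n^2)^{-1}=\tfrac1{2w}\bigl(\pi\cot(\pi w)-\tfrac1w\bigr)$ with $w=\sqrt{p^2+\lambda}$ (valid for large $p$, as $\lambda\notin\mathcal{N}$ forces $w\notin\Z$), then subtracting the diagonal contribution $1/\lambda$ and using $w=p+\tfrac\lambda{2p}+O(p^{-3})$ together with $\cot(\pi w)=\cot\!\bigl(\tfrac{\pi\lambda}{2p}+O(p^{-3})\bigr)=\tfrac{2p}{\pi\lambda}+O(p^{-1})$, the $1/\lambda$ terms cancel and one gets $T_p=O(p^{-2})$; hence $S_\infty:=\sum_p a_pT_pf_p$ lies in $H^r_1$, as $\sum_p p^{2r}(a_pT_p)^2\le C\sum_p p^{2r-4}<+\infty$ (in fact for every $r<3/2$). \emph{Second}, show $\|S_N-S_\infty\|_{H^r}\to0$: the $p$-th coefficient of $S_N-S_\infty$ is $-a_pR_{p,N}$ with $R_{p,N}:=\sum_{n>N,\,n\neq p}(p^2+\lambda-n^2)^{-1}$, and a direct estimate gives $|R_{p,N}|=O(1/N)$ for $p\le N/2$, $O(N^{-1}\log N)$ for $N/2<p\le2N$, and --- writing $R_{p,N}=T_p-\sum_{n\le N}(p^2+\lambda-n^2)^{-1}$ and using $T_p=O(p^{-2})$ --- $O(N/p^2)$ for $p>2N$; summing $p^{2r}|a_pR_{p,N}|^2$ over $p$ yields $O(N^{2r-1}\log^2 N)$, which tends to $0$ precisely when $r<1/2$, whereas at $r=1/2$ the block $p\le N/2$ alone contributes $\asymp N^{-2}\sum_{p\le N/2}p^{2r}\asymp1$. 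Thus the range $r\in[0,1/2)$ is sharp here as well, and $S_N\to S_\infty$ in $H^r_1$.

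The hard part is clearly~(3): (1) and~(2) are routine once the reduction to the two displayed scalar sums is made, whereas (3) needs both the exact cancellation encoded in $T_p=O(p^{-2})$ --- for which the cotangent identity (or an equivalent telescoping of $\sum_j(\tfrac1j-\tfrac1{2p+j})$-type sums) seems unavoidable --- and the uniform control of the coefficients of $S_N-S_\infty$ across the three ranges $p\lesssim N$, $p\sim N$, $p\gtrsim N$.
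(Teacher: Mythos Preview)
Your reductions for (1) and (2) are correct and essentially coincide with the paper's, the only difference being the order of summation: the paper swaps $n$ and $p$ at the outset, writing
\[
\sum_n\sum_{p\neq n}\frac{p^{2r}}{(p^2+\lambda-n^2)^2}
=\sum_p p^{2r}\sum_{n\neq p}\frac{1}{(p^2+\lambda-n^2)^2},
\]
and then bounds the inner sum by $O(p^{-2})$ via the same near/far splitting, which makes the condition $r<1/2$ transparent. Your estimate with the $n$-sum outside works equally well.

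For (3) your argument is correct but follows a genuinely different and more elaborate route. The paper proceeds as follows: it first observes that the series converges in $H^{-1}_1$ (since $\{nq_n\}$ and $\{nf_n\}$ are Riesz bases there and the coefficients $1/n$ are in $\ell^2$), and uses absolute convergence in $H^{-1}_1$ to justify, via Fubini, the interchange
\[
\sum_n\Bigl(q_n-\frac{a_nf_n}{\lambda}\Bigr)
=\sum_p a_pf_p\sum_{n\neq p}\frac{1}{p^2+\lambda-n^2}
\quad\text{in }H^{-1}_1.
\]
It then shows the right-hand side lies in $H^r_1$ by bounding the inner sum \emph{in absolute value}:
\[
\sum_{n\neq p}\frac{1}{|p^2+\lambda-n^2|}\le C\,\frac{1+\log p}{p},
\]
obtained from the three-range splitting $n<p$, $p<n<2p$, $n\ge 2p$; then $\sum_p p^{2r}\bigl(\tfrac{1+\log p}{p}\bigr)^2<\infty$ exactly for $r<1/2$. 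No special functions are needed, and no separate tail estimate $\|S_N-S_\infty\|\to 0$ is required.

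Your approach via the cotangent identity extracts strictly more: the cancellation $T_p=O(p^{-2})$ shows that the limit $S_\infty$ actually lies in $H^{3/2-}_1$, not merely $H^{1/2-}_1$, and your direct estimate of $\|S_N-S_\infty\|_{H^r}$ proves the stronger statement that the partial sums converge in $H^r_1$ (not just that the $H^{-1}_1$-limit happens to be more regular). This also gives a genuine sharpness proof for the ``convergence in $H^r_1$'' reading of the statement, whereas the paper's sharpness remark only says that its particular upper bound ceases to be summable at $r=1/2$. The price you pay is the asymptotic expansion of $\cot(\pi\sqrt{p^2+\lambda})$ and the three-regime control of the remainders $R_{p,N}$; the paper's Fubini-then-absolute-value argument is shorter and entirely elementary, at the cost of the cruder (but sufficient) bound on $T_p$.
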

\begin{proof}
\noindent {\bf (1) \textcolor{black}{Smoothing effect in $H^{m+r}_{1}$, for $r\in[0,1/2)$}} 
\begin{align*}
     \sum_n \left\|q_n- a_nf_n/\lambda\right\|_{H^{m+r}_1}^2&= \sum_n \left\|\sum_{p\neq n}\frac{a_pf_p}{p^2+\lambda-n^2}\right\|_{H^{m+r}_1}^2, \\
     &\leq \textcolor{black}{C}\sum_n \sum_{p\neq n}\frac{p^{2r}}{(p^2+\lambda-n^2)^2}, \\
     &= \textcolor{black}{C}\sum_p p^{2r}\sum_{n\neq p}\frac{1}{(p^2+\lambda-n^2)^2}, \\
     &\leq C +\textcolor{black}{C}\sum_{p\geq N} p^{2r}\sum_{n\neq p}\frac{1}{(p^2+\lambda-n^2)^2},
\end{align*}
\textcolor{black}{where C is a constant that can change between lines.}
Moreover, 
\begin{align*}
   \sum_{p\geq N} p^{2r}\sum_{n< p}\frac{1}{(p^2+\lambda-n^2)^2}&\leq   \sum_{p\geq N} p^{2r-2}\sum_{n< p}\frac{1}{(p-n)^2}<+\infty,
\end{align*}
and, \textcolor{black}{using \eqref{eq:manipj}}, 
\begin{align*}
   \sum_{p\geq N} p^{2r}\sum_{p<\textcolor{black}{n}}\frac{1}{(p^2+\lambda-n^2)^2}&\textcolor{black}{\leq \sum_{p\geq N} p^{2r}\sum_{p<\textcolor{black}{n}}\frac{4}{(n^{2}-p^{2})^2} }\\
   &\leq \sum_{p\geq N} p^{2r-2}\sum_{p<\textcolor{black}{n}}\frac{4}{(p-n)^2}<+\infty.
\end{align*}
From the above proof we find that the condition $r<1/2$ is sharp. \\

\noindent {\bf (2) \textcolor{black}{Smoothing effect in $H^{-1+m+r}_{1}$ for $r\in[0,1/2)$}} 
\begin{align*}
     \sum_n \left\|n(q_n- a_nf_n/\lambda)\right\|_{H^{-1+m+r}_1}^2&= \sum_n \left\|\sum_{p\neq n}\frac{n a_pf_p}{p^2+\lambda-n^2}\right\|_{H^{-1+m+r}_1}^2, \\
     &\leq \textcolor{black}{C} \sum_n \sum_{p\neq n}\frac{p^{2r-2} n^2}{(p^2+\lambda-n^2)^2}, \\
     &= \textcolor{black}{C} \sum_p p^{2r-2}\sum_{n\neq p}\frac{n^2}{(p^2+\lambda-n^2)^2}, \\
     &\leq C +\textcolor{black}{C} \sum_{p\geq N} p^{2r-2}\sum_{n\neq p}\frac{n^2}{(p^2+\lambda-n^2)^2}.
\end{align*}
Moreover, 
\begin{align*}
   \sum_{p\geq N} p^{2r-2}\sum_{n< p}\frac{n^2}{(p^2+\lambda-n^2)^2}&\leq   \sum_{p\geq N} p^{2r-2}\sum_{n< p}\frac{1}{(p-n)^2}<+\infty,
\end{align*}
and, \textcolor{black}{still using \eqref{eq:manipj}}, 
\begin{align*}
   \sum_{p\geq N} p^{2r-2}\sum_{p<\textcolor{black}{n}}\frac{n^2}{(p^2+\lambda-n^2)^2}&\leq   \sum_{p\geq N} p^{2r-2}\sum_{p<\textcolor{black}{n}}\frac{16}{(p-n)^2}<+\infty.
\end{align*}

\noindent {\bf (3) \textcolor{black}{The quantity $ \sum_{n\in\mathbb{N}^{*}} (q_n-\frac{a_n f_n}{\lambda})$ belongs to $H^r_1$ for $r\in[0,1/2)$.}}

At first we point out that since  $\{n q_n\}_{n\in \N^*}$ is a Riesz basis of $H^{-1}_1$, and $\{n f_n\}_{n\in \N^*}$ is an orthonormal basis of $H^{-1}_1$, the candidate  $ \sum_{n\in\mathbb{N}^{*}} (q_n-\frac{a_n f_n}{\lambda})$  belongs to $H^{-1}_1$.  We need to show that it actually belongs to more regular spaces $H^r_1$ for $r\in[0,1/2)$.   Moreover, both $ \sum_{n\in\mathbb{N}^{*}} q_n$ and $\sum_{n\in\mathbb{N}^{*}} \frac{a_n f_n}{\lambda}$  belong to $H^{-1/2-}$ but not to $H^{-1/2}$. 

In the following, we mainly focus on the case $r=0$.  Since $m=0$ we notice that the lemma is equivalent to 
 \begin{equation}
     \left\|\sum_n \sum_{p\neq n} \frac{a_p f_p}{p^2+\lambda-n^2}\right\|_{L^2}^2< +\infty.
 \end{equation}
 \textcolor{black}{Note that this} {\bf cannot be directly deduced from} the quadratically close inequality that we proved in Lemma \ref{lem-rbheat-1} (3) (case $m= s= 0$),
  \begin{equation}
     \sum_n \left\|\sum_{p\neq n} \frac{a_p f_p}{p^2+\lambda-n^2}\right\|_{L^2}^2< +\infty.
 \end{equation}
 
 Indeed, we need more delicate estimates.  {\color{black}We know from the fact that  $ \sum_{n\in\mathbb{N}^{*}} (q_n-\frac{a_n f_n}{\lambda})$ belongs to $H^{-1}_1$ that  
 \begin{equation}\label{H-1toL2}
     \sum_{n\in\mathbb{N}^{*}} (q_n-\frac{a_n f_n}{\lambda})= \sum_n \sum_{p\neq n} \frac{a_p f_p}{p^2+\lambda-n^2}= \sum_p \sum_{n\neq p} \frac{a_p f_p}{p^2+\lambda-n^2} \; \textrm{ in } H^{-1}_1,
 \end{equation}
 the last equality can be obtained from the ``distribution sense": we observe that the inner product of both quantities with $f_k$ are equivalent, which implies that those two quantities are equivalent.  Or, alternatively,  since
\begin{align*}
  &\;\;\;\;\sum_n \sum_{p\neq n}  \left\|\frac{a_p f_p}{p^2+\lambda-n^2}\right\|_{H^{-1}_1}\\
    & \leq C     \sum_n \sum_{p\neq n} \frac{1}{p |p^2-n^2|} \\
     & \leq C     \sum_n \left(\sum_{p< n}+ \sum_{n< p< 2n}+ \sum_{p> 2n}\right) \frac{1}{p |p^2-n^2|} \\
    & \leq C \sum_n  \left(\sum_{p< n} \frac{1}{n^2}(\frac{1}{p}+ \frac{1}{n-p})+ \sum_{n< p< 2n} \frac{1}{n^2} \frac{1}{p-n}+ \sum_{p> 2n}\frac{1}{p^3}\right)\\
    & \leq C \sum_n  \frac{1+\log n}{n^2} <+\infty, 
 \end{align*}
then thanks to Fubini we have that
 \begin{equation*}
    \sum_n \sum_{p\neq n} \frac{a_p f_p}{p^2+\lambda-n^2}= \sum_p \sum_{n\neq p} \frac{a_p f_p}{p^2+\lambda-n^2} \; \textrm{ in } H^{-1}_1.
 \end{equation*}
 }
 
Hence,  it suffices to  show the last quantity in equation \eqref{H-1toL2} belongs to $L^2_1$, which is equivalent to  
  \begin{equation}\label{ine-sum-l2}
     \left\|\sum_p a_pf_p \sum_{n\neq p} \frac{1}{p^2+\lambda-n^2}\right\|_{L^2_1}^2=  \sum_p a_p^2 \left(\sum_{n\neq p} \frac{1}{p^2+\lambda-n^2}\right)^2 < +\infty,
 \end{equation}
 which, to be combined with the growth condition $a_n\sim 1$, is equivalent to 
   \begin{equation}
   \sum_p  \left(\sum_{n\neq p} \frac{1}{p^2+\lambda-n^2}\right)^2 < +\infty.
 \end{equation}
 It further suffices to prove that 
  \begin{equation}
   \sum_p  \left(\sum_{n\neq p} \left|\frac{1}{p^2+\lambda-n^2}\right|\right)^2 < +\infty,
 \end{equation}
 which further \textcolor{black}{reduces to showing that}
   \begin{equation}
   \sum_{p\geq N}  \left(\left(\sum_{n< p}+ \sum_{p+1<n<2p}+ \sum_{n\geq 2p}\right) \left|\frac{1}{p^2+\lambda-n^2}\right|\right)^2=: \sum_{p\geq N} (S_p^1+ S_p^2+ S_p^3)^2 < +\infty.
 \end{equation}
 Next we estimate the $S_p^i$ one by one. 
 
 i) \textcolor{black}{Estimation of $S_p^{1}$:} \begin{align*}
     S_p^1&= \sum_{n\leq p-1} \frac{1}{p^2+\lambda-n^2}, \\
     &\leq \sum_{n\leq p-1} \frac{1}{p^2-n^2},\\
     &\leq \frac{1}{p} \sum_{n\leq p-1} \frac{1}{p-n}, \\
     &\leq \textcolor{black}{C}\frac{1+ \log p}{p},
 \end{align*}
 where $C$ is a constant independent of $p$.
 
 ii) \textcolor{black}{Estimation of $S_p^{2}$:} by the choice of $p\geq N$ and the fact that $\lambda=N$, we know that 
  \begin{align*}
     S_p^2&= \sum_{p<n<2p} \frac{1}{n^2-p^2-\lambda}, \\
     &\leq \sum_{p<n<2p} \frac{2}{n^2-p^2},\\
     &\leq \frac{1}{p} \sum_{p<n<2p} \frac{1}{n-p}, \\
     &\leq \textcolor{black}{C}\frac{1+ \log p}{p}.
 \end{align*}

 iii) \textcolor{black}{Estimation of $S_p^{3}$:} the last part can be estimated by 
  \begin{align*}
     S_p^3&= \sum_{2p\leq n} \frac{1}{n^2-p^2-\lambda}, \\
     &\leq \sum_{2p\leq n} \frac{2}{n^2-p^2},\\
     &\leq \sum_{2p\leq n} \frac{4}{n^2}, \\
     &\leq \frac{3}{p}.
 \end{align*}
 
 Consequently, 
 \begin{equation}
     \sum_{p\geq N} (S_p^1+ S_p^2+ S_p^3)^2\leq  \sum_{p\geq N} \textcolor{black}{C\left(\frac{1+ \log p}{p}\right)^2} < +\infty
 \end{equation}
 
Finally, for the case $r\in [0, 1/2)$, by slightly  modifying \eqref{ine-sum-l2}
  it suffices to show that 
\begin{equation}
    \sum_{p\geq N} p^{2r} (S_p^1+ S_p^2+ S_p^3)^2\leq \textcolor{black}{C}\sum_{p\geq N} p^{2r}\left(\textcolor{black}{\frac{1+ \log p}{p}}\right)^2  < +\infty,
\end{equation}
where $r<1/2$ is sharp. \textcolor{black}{This ends the proof of Lemma \ref{lem-rbheat-2}}.
 \end{proof}

\subsection{On the choice of the backstepping candidate}\label{sec-feedback-candidate}
{\color{black}
{\bf 
\textcolor{black}{I}n the following, in order to simplify the calculation,  we only  consider the special case $m=0$, thus
} 
$$c< |a_n|< C,$$ 
{\bf \textcolor{black}{the} other cases can be treated \textcolor{black}{exactly} similarly.}

According to Section \ref{sec-set-up-back} we know that every sequence $\{K_n\}_{n\in \N^*}$ determines a unique transformation $T$, and at least formally, it satisfies 
\begin{equation}\label{op-eq-ver}
    TA+ \phi K= AT-\lambda T.
\end{equation}
Thanks to Section \ref{sec-riesz-pro} we know that when $\{K_n\}_{\N^*}$ satisfies
\begin{equation}\label{kn-ver-dec}
    |K_n|\leq C, \; \forall n\in \N^*,
\end{equation}
this unique transformation $T$ is bounded on $H^s$ with $s\in (-3/2, 3/2)$. \\Moreover, if $\{K_n\}_{n\in \N^*}$ further satisfies 
\begin{equation}\label{kn-ver-dec-full}
    c\leq |K_n|\leq C, \; \forall n\in \N^*,
\end{equation}
then $T$ is an isomorphism on $H^s$ with $s\in (-3/2, 3/2)$.\\
However, until now we have \textcolor{black}{not yet treated} the second condition: \begin{equation}\label{tphi-eq-ver}
    T\phi= \phi.
\end{equation} 
\textcolor{black}{This condition is called in the literature the ``$TB=B$ condition" (here the function $\phi$ represent what is usually formally denoted $B$)}, which is now becoming a standard requirement for Fredholm type backstepping transformations, \cite{CGM,coron:hal-03161523, CoronLu14,CoronLu15,GLM, zhang:hal-01905098}.
The aim of this section is to determine \textcolor{black}{a} precise candidate of   $\{K_n\}_{n\in \N^*}$ such that
\begin{itemize}
    \item[(i)] The \textcolor{black}{``$TB=B$ condition"} \eqref{tphi-eq-ver} holds, \textcolor{black}{in a suitable space to be found.}
      \item[(ii)] The \textcolor{black}{boundedness} condition  \eqref{kn-ver-dec} holds.
     \item[(iii)]  The operator equality \eqref{op-eq-ver} holds, \textcolor{black}{in a suitable space to be found.}
\end{itemize}
The proofs of (i) and (ii) are provided by Sections \ref{sub:uniqueness}--\ref{subsec-Kn-bound}, and Section \ref{subsubsec-2}  is devoted to the proof of (iii).

We also remark here that, in this section, we only prove the condition \eqref{kn-ver-dec} instead of the stronger \eqref{kn-ver-dec-full}, as it is sufficient to obtain conditions \eqref{tphi-eq-ver} and \eqref{op-eq-ver}. The proof of \eqref{kn-ver-dec-full}, which implies that $T$ is an isomorphism and not only a bounded operator, is left to Sections \ref{sec-ine-kn-pro}--\ref{sec-T-inv-L2}.

}

\subsubsection{The $TB=B$ condition in weaker spaces}\label{sub:uniqueness}
For any given $\phi$  satisfying
\begin{equation}
     \phi=\sum_{n\in \N^*} a_n f_n, \textrm{ with }  c\leq |a_n|\leq C, \label{cond-an}
\end{equation}
we want to find $K$ such that
\begin{gather}
    K=\sum_{n\in \N^*} K_n f_n, \textrm{ with } c\leq|K_n|\leq C, \\
    T \phi= \phi, \textrm{ in a suitable sense}.\label{TB=B}
\end{gather}

{\color{black}Since the solutions of the closed-loop system are expected to live in $C^0([0, \tau]; L^2_1)$, it is natural to start by  considering that $T\phi=\phi$ holds in $L^2_1$.} 

For any given $\{a_n\}_{n\in \N^*}$ satisfying \eqref{cond-an}, Lemma \ref{lem-rbheat-1} (2) holds. Meanwhile the strong  $T\phi=\phi$ condition for \eqref{TB=B} reads  
\begin{equation}
    \sum_p a_p f_p=\phi= T\phi= \sum_n a_n h_n= \sum_n -a_n K_n q_n,
\end{equation}
which is equivalent to
\begin{equation}\label{eq-tbb}
   \phi=  -\sum_n a_n K_n q_n.
\end{equation}
However, we can not use   Lemma \ref{lem-rbheat-1} \textcolor{black}{(3) with ${\color{ForestGreen}m=}s=0$} directly, because $\phi\in H^{-\frac{1}{2}-}_1$ instead of $L^2_1$. Otherwise, formally the preceding equation admits a solution. \\

\color{black}We now prove that the condition \eqref{tphi-eq-ver} can hold in $H^{-1/2-}_1$.
We notice that, formally,
\begin{equation}\label{eq-an-kn-Riesz}
     T\phi=\sum_n a_n K_n q_n=    \sum_n \frac{a_n K_n}{n} (n q_n).
\end{equation}
As $\{n q_n\}_{n\in \N^*}$ is a Riesz basis of $H^{-1}_1$, and as $\phi\in H^{-1}_1$, \eqref{eq-tbb} can be solved in $H^{-1}_1$. Indeed, since the span of $\{n q_n\}_{n\in \N^*}$ is dense in $H^{-1}_1$, and since $\phi$ also belongs to $H^{-1}_1$, it has the unique decomposition:
\begin{equation}
    \phi=\sum_{n\in \N^\ast} \tilde{a}_n (nq_n), \quad (\tilde{a}_n ) \in \ell^2(\N^\ast).
\end{equation}
We then define 
\begin{equation}\label{1st-def-Kn}
    K_n:=-n \frac{\tilde{a}_n}{a_n}, \quad n \in \N^\ast.
\end{equation}
Note that $(K_n)$ is thus uniquely determined, solves \eqref{eq-tbb}, and
\begin{equation}\label{Kn-growth-an}
    \left(\frac{a_n K_n}{n}\right)_{n\in \N^\ast} \in \ell^2(\N^\ast).
\end{equation}

Now, as $\phi\in H^{-1/2-}_1$, by Lemma \ref{lem-rbheat-1} (3), and uniqueness of $\tilde{a}_n$, we have
\begin{equation}\label{phi-decomp-H-s}
    \phi=\sum_{n\in \N^\ast} \tilde{a}_n n^{s+1}(n^{-s}q_n), \quad s\in (-3/2, -1/2),
\end{equation}
with 
\[(\tilde{a}_n n^{s+1})_{n\in \N^\ast}\in \ell^2(\N^\ast), \quad s\in (-3/2, -1/2).\]
Together with \eqref{1st-def-Kn}, this implies
\begin{equation}\label{Kn-growth-an-s}
    (a_nK_n n^s)_{n\in \N^\ast} \in \ell^2(\N^\ast), \quad s\in (-3/2, -1/2),
\end{equation}
which generalizes \eqref{Kn-growth-an}.
We can now write, analogously to \eqref{eq-an-kn-Riesz}:
\begin{equation}\label{eq-an-Kn-Riesz-s}
     T\phi=\sum_n a_n K_n q_n=    \sum_n a_n K_n n^s (n^{-s} q_n), \quad s\in (-3/2, -1/2).
\end{equation}
Together with \eqref{phi-decomp-H-s}, this solves \eqref{eq-tbb}, i.e. \eqref{tphi-eq-ver}, in $H^s_1$ for $s\in (-3/2, -1/2)$

Note however that, although condition \eqref{Kn-growth-an-s} holds for any bounded sequence $\{K_n\}_{n\in \N^*}$, conversely \eqref{Kn-growth-an-s} is not enough to conclude that $\{K_n\}_{n\in \N^*}$ is bounded.  Indeed, let $b_n:= \log n$, we can easily observe that
\begin{equation}
    \left\{\frac{b_n}{n^{1/2+\varepsilon}}\right\}_{n\in \N^*}\in\ell^2(\N^{*}), \; \forall \varepsilon>0.
 \end{equation}

\begin{remark}
More generally, even if a sequence $\{b_n\}_{n\in \N^*}$ satisfies 
\begin{equation}\label{verif-l2-notin}
    \left\{\frac{b_n}{n^{\varepsilon}}\right\}_{n\in \N^*}\in\ell^2(\N^{*}), \; \forall \varepsilon>0,
 \end{equation}
 we are not able to conclude that the sequence $\{b_n\}_{n\in \N^*}$ is bounded. For example, by defining 
 \begin{gather}
     M(n)= [e^n]+ 1, \forall n\in \N^*, \\
     b_m:= n, \textrm{ if } m= M(n), \\
     b_m:= 0, \textrm{ if } m\notin \{M(n): n\in \N^*\},
 \end{gather}
 we know that for any $\varepsilon>0$, 
 \begin{equation}
     \sum_{m\in \N^*} \left(\frac{b_m}{m^{\varepsilon}}\right)^2=   \sum_{m\textcolor{black}{\in \{M(n): n\in \N^*\}}} \left(\frac{b_m}{m^{\varepsilon}}\right)^2\leq \sum_{n\in \N^*} \left(\frac{n}{e^{\varepsilon n}}\right)^2<+\infty.
 \end{equation}
 Clearly, such  $\{b_n\}_{n\in \N^*}$ is not bounded.
\end{remark}

In the next section, we will prove that the sequence   $\{K_n\}_{n\in \N^*}$  that we have found in this section is indeed bounded from above. \color{black}

\subsubsection{Regularity of $K$ and $T$}\label{subsec-Kn-bound}

{\color{black}In order to prove the condition \eqref{kn-ver-dec}, \textcolor{black}{namely the boundedness of the candidate $\{K_{n}\}_{n\in\mathbb{N}^{*}}$,} we come back to  the $T\phi=\phi$ equation }
 \begin{equation}\label{eq-knan}
     \sum_n -a_n K_n q_n= \sum_n a_n f_n.
 \end{equation}
 {\color{black}Thanks to the preceding Section, we have found \textcolor{black}{a} unique  $\{K_n\}_{n\in \N^*}$ satisfying \eqref{Kn-growth-an-s} such that equation \eqref{eq-knan} holds in $H^{-1/2-}_1$  (in particular, in  $H^{-1}_1$).}

 Motivated \textcolor{black}{by} Lemma \ref{lem-rbheat-2} part (3), we define $c_n$ as
 \begin{equation}\label{def-cn-kn}
     c_n =: -a_n K_n-\lambda.
 \end{equation}
  
{\color{black} 
As $\{n q_n\}_{n\in \N^*}$ is a Riesz basis of $H^{-1}_1$, we know that  
\begin{equation}
    \sum_n \lambda q_n=   \lambda \sum_n \frac{1}{n} (n q_n)\in H^{-1}_1.
\end{equation}
Then, the $T\phi=\phi$ condition in $H^{-1}_1$   is equivalent to 
 \begin{equation}\label{eq:cnqnh11}
     \sum_n (\lambda+ c_n)q_n= \sum_n a_n f_n \; \textrm{ in } H^{-1}_1,
 \end{equation}
and 
 \begin{equation}\label{eq:preceding}
     \sum_n  c_n q_n= \sum_n (a_n f_n- \lambda q_n) \; \textrm{ in } H^{-1}_1.
 \end{equation} }
Thanks to Lemma \ref{lem-rbheat-2} part (3) with $r=0$, we actually have
  \begin{equation}\label{eq:cnqnl21}
      \sum_n  c_n q_n= \sum_n (a_n f_n- \lambda q_n)\in L^2_1.
  \end{equation}
 Considering the fact that $\{q_n\}$ is a Riesz basis of $L^2_1$ (Lemma \ref{lem-rbheat-1} (3) with $m=s=0$), by uniqueness we get
 \begin{equation}\label{eq:c_n}
     \{c_n\}_{n\in \N^*}\in\ell^2(\N^{*})
 \end{equation}
 
Now, we have by definition of $c_n$:
  \begin{equation}
      K_n= -\frac{\lambda+ c_n}{a_n}.
  \end{equation}
  By \eqref{cond-an} and \eqref{eq:c_n}, $(K_n)$ is clearly bounded.

  \textcolor{black}{Now, from} Lemma \ref{lem-rbheat-1} (4) in the case $m=0$, we deduce that $T$ is  bounded on $H^s_{\textcolor{black}{1}}$ for $s\in (-3/2, 3/2)$.

 \begin{remark}
 We notice that the equation  \eqref{eq-knan} is equivalent to 
 \begin{equation}
     \sum_{n\in \N^*} (a_n K_n) \frac{1}{p^2+\lambda-n^2}= -1, \; \forall p\in \N^*.
 \end{equation}
 Hence the value of $\{-a_n K_n\}_{n\in \N^*}= \{\lambda+ c_n\}_{n\in \N^*}$ is independent of the choice of $\{a_n\}_{n\in \N^*}$ satisfying
 \[c\leq |a_n|\leq C.\]
 \end{remark}

\subsubsection{On the operator equality}\label{subsubsec-2}
\textcolor{black}{In the previous subsection we proved the following lemma} 
 \begin{lem}\label{lem-tb=b} By the  choice of $\{a_n\}_{n\in \N^*}$ and $\{K_n\}_{n\in \N^*}$ from Section \ref{subsec-Kn-bound}, we have
 \begin{equation}
     T\phi= \phi \textrm{ in } H^{s}_1, \quad s\in (-3/2, 1/2),
 \end{equation}
 in particular it holds in $H^{-1}_1$. However, it does not hold in $L^2_1$.
 \end{lem}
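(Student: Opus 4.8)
The plan is to recognize that Lemma~\ref{lem-tb=b} merely records, in a single statement, what has already been established in Section~\ref{subsec-Kn-bound}, together with the sharp smoothing of Lemma~\ref{lem-rbheat-2}~(3); accordingly the proof will be short and will mostly amount to tracking in which $H^s_1$ each series converges. First I would recall the setup: we are in the case $m=0$, $\phi=\sum_{n\in\N^*}a_nf_n$ with $c\le|a_n|\le C$, and $c_n:=-a_nK_n-\lambda$ as in \eqref{def-cn-kn}. The sequence $\{K_n\}_{n\in\N^*}$ was pinned down in Section~\ref{subsec-Kn-bound} precisely so that \eqref{eq:preceding}, hence \eqref{eq:cnqnl21}, holds; comparing \eqref{eq:cnqnl21} with the unique expansion of $\sum_n(a_nf_n-\lambda q_n)$ along the Riesz basis $\{q_n\}_{n\in\N^*}$ of $L^2_1$ (Lemma~\ref{lem-rbheat-1}~(3) with $m=s=0$) yields $\{c_n\}_{n\in\N^*}\in\ell^2(\N^*)$, whence $\{K_n\}$ is bounded and, by Lemma~\ref{lem-rbheat-1}~(4), $T$ is bounded on $H^s_1$ for $s\in(-3/2,3/2)$ --- so every expression below is meaningful.

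Then I would carry out the following short computation. Using $Tf_n=h_n=-K_nq_n$,
\[
 T\phi=\sum_{n\in\N^*}(-a_nK_n)q_n=\sum_{n\in\N^*}(\lambda+c_n)q_n=\lambda\sum_{n\in\N^*}q_n+\sum_{n\in\N^*}c_nq_n,
\]
where the first series converges in $H^{-1}_1$ (as $\{nq_n\}_{n\in\N^*}$ is a Riesz basis of $H^{-1}_1$ and $\{\lambda/n\}_n\in\ell^2$) and the second in $L^2_1$ (as $\{q_n\}_{n\in\N^*}$ is a Riesz basis of $L^2_1$ and $\{c_n\}_n\in\ell^2$); in particular $T\phi\in H^{-1}_1$. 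Subtracting $\phi$ and regrouping gives
\[
 T\phi-\phi=\sum_{n\in\N^*}c_nq_n+\Big(\lambda\sum_{n\in\N^*}q_n-\phi\Big)=\sum_{n\in\N^*}c_nq_n+\lambda\sum_{n\in\N^*}\Big(q_n-\frac{a_nf_n}{\lambda}\Big).
\]
By Lemma~\ref{lem-rbheat-2}~(3) the last series belongs to $H^r_1$ for every $r\in[0,1/2)$; by \eqref{eq:cnqnl21} it equals $-\sum_n c_nq_n$, so $\sum_n c_nq_n\in H^r_1$ as well, and $T\phi-\phi$ is, for each such $r$, a difference of two elements of $H^r_1$ --- equal to $0$ by \eqref{eq:cnqnl21}. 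Hence $T\phi=\phi$ in $H^s_1$ for all $s\in(-3/2,1/2)$, and in particular in $H^{-1}_1$. For the last assertion I would note that $c\le|a_n|\le C$ gives $\|\phi\|_{H^s_1}^2\sim\sum_{n\ge1}n^{2s}$, finite iff $s<-1/2$, so $\phi\in H^{-1/2-}_1$ but $\phi\notin L^2_1$; therefore no identity between two genuine $L^2_1$ functions can represent $T\phi=\phi$, i.e. the equality fails in $L^2_1$, and by the sharpness clause of Lemma~\ref{lem-rbheat-2}~(3) the exponent $1/2$ cannot be improved.

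I do not anticipate any real obstacle: the only non-routine ingredient is Lemma~\ref{lem-rbheat-2}~(3), already proved, whose whole point is that the cancellations $S^1_p+S^2_p+S^3_p\lesssim(1+\log p)/p$ make $\sum_n(q_n-a_nf_n/\lambda)$ land in $H^r_1$ exactly for $r<1/2$. The one point demanding care is the choice of spaces: $\lambda\sum_n q_n$ converges in $H^{-1}_1$ but \emph{not} in $L^2_1$, so one must first establish the identity in $H^{-1}_1$ and only then ``upgrade'' it using the cancellation between $\lambda\sum_n q_n-\phi$ and $\sum_n c_nq_n$; relatedly, for $s\ge-1/2$ the statement ``$T\phi=\phi$ in $H^s_1$'' is to be read as the regrouped defect being a well-defined vanishing element of $H^s_1$, since neither $T\phi$ nor $\phi$ individually lies in $H^s_1$ there --- which is precisely why, in the strong sense of an equality of honest functions, the identity holds in $H^{-1}_1$ but not in $L^2_1$.
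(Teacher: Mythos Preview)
Your proof is correct and matches the paper's approach: the paper gives no separate proof, stating only that ``In the previous subsection we proved the following lemma'', i.e., it regards the result as a summary of Sections~\ref{sub:uniqueness}--\ref{subsec-Kn-bound}, which is exactly what you trace through.

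One caveat worth flagging: the range $s\in(-3/2,1/2)$ in the stated lemma is almost certainly a typo for $(-3/2,-1/2)$. This is consistent with Proposition~\ref{thm-ope-t} (where $T_k\phi_k=\phi_k$ is asserted in $H^{m-1/2-}_k$), with Corollary~\ref{cor-key}, with what Section~\ref{sub:uniqueness} actually establishes, with the way the lemma is used in the proof of Lemma~\ref{lem-op-eq-tbk} (only $T\phi\in H^{-1/2-}_1$ is needed), and with the fact that $\phi\notin H^s_1$ for $s\ge-1/2$. Most tellingly, the lemma's own final clause ``it does not hold in $L^2_1$'' would contradict $0\in(-3/2,1/2)$. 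Your last paragraph's reinterpretation --- reading ``$T\phi=\phi$ in $H^s_1$'' for $s\ge-1/2$ as the vanishing of a regrouped defect --- is therefore unnecessary, and is in any case somewhat vacuous: once $T\phi-\phi=0$ in $H^{-1}_1$, the zero element trivially lies in every $H^s_1$. For the intended range $(-3/2,-1/2)$ your argument is complete and agrees with the paper.
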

Now, we show the operator equality and the spaces in which this equality holds.
 \begin{lem}\label{lem-op-eq-tbk}
 Let $r\in (-1/2, 1/2)$. By the  choice of $\{a_n\}_{n\in \N^*}$ and $\{K_n\}_{n\in \N^*}$ from Section \ref{subsec-Kn-bound}, for any $\varphi\in H^{r+1}_1$ we have
 \begin{equation}\label{eq:op-eq-2}
     (TA+ T\phi K)\varphi= (AT-\lambda T)\varphi \textrm{ in } H^{r-1}_1.
 \end{equation}
 In particular we can consider $r=0$ then the equality holds in $H^{-1}_1$. \textcolor{black}{Moreover, the range} $r\in (-1/2, 1/2)$ is sharp. 
 \end{lem}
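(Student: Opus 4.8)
The plan is to establish the operator equality first on the eigenfunctions $f_n$, where it is essentially built into the construction of $T$, and then to propagate it to every $\varphi\in H^{r+1}_1$ by density of $\mathcal{S}_1$ together with continuity of the four operators $TA$, $T\phi K$, $AT$ and $\lambda T$ as maps $H^{r+1}_1\to H^{r-1}_1$. The delicate point will be bookkeeping: checking that all four pieces land continuously in the \emph{same} weak space $H^{r-1}_1$ while $\varphi$ carries only $H^{r+1}_1$ regularity, and seeing that the admissible exponents collapse to exactly $r\in(-1/2,1/2)$.

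First I would record the eigenfunction identity. Since $h_n:=Tf_n=-K_n q_n=-K_n\sum_{p\in\N^*}\frac{a_pf_p}{\lambda_n-\lambda_p+\lambda}$ and $\lambda_p-\lambda-\lambda_n=-(\lambda_n-\lambda_p+\lambda)$, a telescoping of the series gives $(A-\lambda)h_n-\lambda_n h_n=K_n\sum_p a_pf_p=K_n\phi$, an equality that makes sense in $H^{-1/2-}_1$ because $h_n\in H^{3/2-}_1$ (so $Ah_n\in H^{-1/2-}_1$) and $\phi\in H^{-1/2-}_1$. Using $Kf_n=K_n$ and $T\phi=\phi$ (Lemma~\ref{lem-tb=b}, valid in $H^s_1$ for $s\in(-3/2,1/2)$, in particular in $H^{r-1}_1$ when $r<1/2$), we get $K_n\phi=(Kf_n)(T\phi)=(T\phi K)f_n$, whence $(AT-\lambda T)f_n=(TA+T\phi K)f_n$ in $H^{r-1}_1$. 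By linearity this extends to all $\varphi\in\mathcal{S}_1$, the series converging thanks to the rapid decay of the Schwartz coefficients.

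Next I would verify the continuity of the four maps on $H^{r+1}_1\to H^{r-1}_1$ so that the equality on the dense subspace $\mathcal{S}_1\subset H^{r+1}_1$ passes to the limit. For $TA$: $A:H^{r+1}_1\to H^{r-1}_1$ is bounded and, by Lemma~\ref{lem-rbheat-1}(4) with $m=0$, $T$ is bounded on $H^{r-1}_1$ provided $r-1>-3/2$, i.e. $r>-1/2$. For $T\phi K$: the functional $K$ is bounded on $H^{r+1}_1$ because $\{K_n\}$ is bounded and $r+1>1/2$ (again $r>-1/2$), so $\varphi\mapsto T\phi K\varphi=(K\varphi)\phi$ is continuous into $H^{r-1}_1$, since $\phi\in H^{r-1}_1$ exactly when $r-1<-1/2$, i.e. $r<1/2$. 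For $AT$: by Lemma~\ref{lem-rbheat-1}(4) with $s=r+1$, $T$ is bounded on $H^{r+1}_1$ provided $r+1<3/2$, i.e. $r<1/2$, and then $A$ sends $H^{r+1}_1$ to $H^{r-1}_1$; finally $\lambda T$ maps $H^{r+1}_1$ into itself, hence into $H^{r-1}_1$. Collecting the constraints gives precisely $r\in(-1/2,1/2)$, and a passage to the limit yields \eqref{eq:op-eq-2}.

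The main obstacle — and the reason the range is sharp — is the tension between the $TB=B$ identity and the fact that $T$ neither gains nor tolerates more than $3/2$ derivatives: $T\phi=\phi$ holds only in the weak spaces $H^s_1$ with $s<1/2$ and fails in $L^2_1$ (Lemma~\ref{lem-tb=b}), so the target space $H^{r-1}_1$ cannot be pushed up to $L^2_1$, forcing $r<1/2$; meanwhile $T$ is bounded only on $H^s_1$ with $|s|<3/2$ and $K$ only on $H^{1/2+}_1$, forcing $r>-1/2$. For sharpness I would note that at $r=-1/2$ the functional $K$ ceases to be bounded on $H^{1/2}_1$ and $T$ ceases to be bounded on $H^{-3/2}_1$, while at $r=1/2$ one has $\phi\notin H^{-1/2}_1$ and $T$ is not bounded on $H^{3/2}_1$, so $AT\varphi$ is uncontrolled for a general $\varphi\in H^{3/2}_1$ — this is also the precise point at which, in contrast with \cite{CGM}, no compatibility conditions are available to reach $r=1$.
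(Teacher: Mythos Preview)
Your proposal is correct and follows essentially the same approach as the paper's own proof: verify the identity on each eigenfunction $f_n$ via the construction of $T$ (and the relation $T\phi=\phi$ from Lemma~\ref{lem-tb=b}), check that the four operators $TA$, $T\phi K$, $AT$, $\lambda T$ are each bounded $H^{r+1}_1\to H^{r-1}_1$ precisely on the range $r\in(-1/2,1/2)$, and extend by density. The only cosmetic difference is the order of presentation (you establish the eigenfunction identity first, the paper checks boundedness first), and your sharpness discussion is slightly more explicit in also flagging the failure of $K$ on $H^{1/2}_1$ and of $\phi\in H^{-1/2}_1$ at the endpoints, whereas the paper emphasizes the failure of $T$ on $H^{r\pm 1}_1$.
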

 \begin{proof}  Recalling that, thanks to Section \ref{subsec-Kn-bound}, $T:H^s_1\rightarrow H^s_1$ is a bounded operator for $s\in (-3/2, 3/2)$.  Though so far we do not know whether $T$ is an isomorphism --this will be proved in the next section-- \textcolor{black}{this} is now sufficient to study the operator equality.

Let us first check that each {\color{ForestGreen}term} of the operator equality \eqref{eq:op-eq-2} is well defined.
 \begin{itemize}
     \item[i)]  $TA\varphi$: we know that $A\varphi\in H^{r-1}_1$. Notice that $r-1\in (-3/2, -1/2)$, which, combined with Lemma \ref{lem-rbheat-1} (4) in the case $m=0$ and $s:=r-1$, \textcolor{black}{implies} that $TA\varphi\in H^{r-1}_1$. Moreover, as $A: H^{r+1}_1\rightarrow H^{r-1}_1$ and $T: H^{r-1}_1\rightarrow H^{r-1}_1$ are bounded, the linear operator  $TA: H^{r+1}_1\rightarrow H^{r-1}_1$ is bounded.\\
     {\color{black} Concerning the sharpness of $r\in (-1/2, 1/2)$, as we can see above, if $r$ is chosen such that $r\leq -1/2$ then the operator $T$ is no longer bounded on $H^{r-1}_1$, thus the linear operator  $TA: H^{r+1}_1\rightarrow H^{r-1}_1$ is not bounded. }
     
     \item[ii)] $T\phi K\varphi$: \textcolor{black}{given that} $\varphi\in H^{r+1}_1$ with $r>-1/2$, and \textcolor{black}{that $K: H^{r+1}_1\rightarrow \R$ is a bounded operator,} we know that $|K\varphi|<+\infty$ is well-defined. Then, thanks to Lemma \ref{lem-tb=b}, $T\phi K\varphi\in H^{-1/2-}_1\subset H^{r-1}_1$. 
     Moreover, as  and $T\phi: \R \rightarrow H^{r-1}_1$ \textcolor{black}{and $K$} are bounded, the linear operator  $T\phi K: H^{r+1}_1\rightarrow H^{r-1}_1$ is bounded.
     
     \item[iii)]  $T\varphi$: thanks to Lemma \ref{lem-rbheat-1} \textcolor{black}{(4) in} the case $m=0$ and $s=r+1\in (1/2, 3/2)$, and the fact that $\varphi\in H^{r+1}_1$, we know that $T\varphi\in H^{r+1}_1\subset H^{r-1}_1$. Clearly, $T: H^{r+1}_1\rightarrow H^{r-1}_1$ is bounded (in fact $T$ is even bounded from $H^{r+1}_{1}$ in itself).
     
     \item[iv)]  $AT\varphi$: because $T\varphi\in H^{r+1}_1$, we have that  $AT\varphi\in H^{r-1}_1$. As $T: H^{r+1}_1\rightarrow H^{r+1}_1$ and $A: H^{r+1}_1\rightarrow H^{r-1}_1$  are bounded, the linear operator  $AT: H^{r+1}_1\rightarrow H^{r-1}_1$ is bounded.\\
      {\color{black} Concerning the sharpness of $r\in (-1/2, 1/2)$, as we can see above, if $r$ is chosen such that $r\geq 1/2$ then the operator $T$ is no longer bounded on $H^{r+1}_1$, thus the linear operator  $AT: H^{r+1}_1\rightarrow H^{r-1}_1$ is not bounded. }
 \end{itemize}

 {\color{black}From the above we have:
 \[TA,\; T\phi K,\; AT,\; \lambda T: H^{r+1}_1\rightarrow H^{r-1}_1,\]
 are  bounded linear operators.  Now, using the definition of $q_{n}$ given in Section \ref{sec-set-up-back}, we have in the $H^{r-1}$ space:
 \begin{align*}
     (TA+T\phi K-AT+\lambda T) f_n&= (-A-n^2+\lambda) Tf_n+ \phi  K_n,\\
     &= K_n\left(\phi - (-A-n^2+\lambda)q_n\right), \\
     &= 0, \\
     i.e. \quad TAf_n+T\phi Kf_n &= ATf_n-\lambda T f_n.
 \end{align*}
 
 Since the finite linear combinations of $\{f_n\}_{n\in \N^*}$ \textcolor{black}{are} dense in $H^{r+1}_1$, by boundedness of $TA, AT, T\phi K$ and $T$, this yields \eqref{eq:op-eq-2}.}
 \end{proof}
 
 \subsection{Invertibility of the transformation $T$ on the space $H^{-1}_1$.}\label{sec-ine-kn-pro}
So far we know, thanks to Lemma \ref{lem-rbheat-1}, that $T$ is a bounded operator on $H^{s}_1$. But we do not know yet that it is an isomorphism. To prove this, we will first prove in this section that $T$ is invertible on $H^{-1}_1$. Then, we will show in Section \ref{sec-T-inv-L2} that this implies that it is invertible also on $H^{s}_1$ for $s\in(-3/2,3/2)$.\\

The key lemma is the following:
 
 \begin{lem}\label{lem-key-compact}
 Let $r\in [0, 1/2)$. The operator 
 \[\tilde{T}:= T-Id: L^2_1\rightarrow H^r_1,\]
  \[\textcolor{black}{(\text{resp. }}\tilde{T}:= T-Id: H^{-1}_1\rightarrow H^{-1+r}_1\textcolor{black}{)},\]
 \textcolor{black}{is a} continuous operator. Hence $\tilde{T}:L^2_1\rightarrow L^2_1$  (resp. $T:H^{-1}_1\rightarrow H^{-1}_1$), and $T$ is a Fredholm operator on $L^2_1$ (resp. on $H^{-1}_1$).
 \end{lem}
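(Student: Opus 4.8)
The plan is to compute $\tilde T$ explicitly on the eigenbasis $\{f_n\}_{n\in\N^*}$ and read off the gain of regularity from the smoothing estimates of Lemma \ref{lem-rbheat-2}. Set $\eta_n:= q_n - a_n f_n/\lambda = \sum_{p\neq n}\frac{a_p f_p}{p^2+\lambda-n^2}$. Since $Tf_n = -K_n q_n$ and, by the definition $c_n:=-a_nK_n-\lambda$ of Section \ref{subsec-Kn-bound}, $-K_n a_n = \lambda+c_n$, one gets in one line
\begin{equation*}
\tilde T f_n = (T-\mathrm{Id})f_n = -K_n q_n - f_n = -K_n\Big(\frac{a_n}{\lambda}f_n + \eta_n\Big) - f_n = \frac{c_n}{\lambda}f_n - K_n\eta_n .
\end{equation*}
Hence, on the dense subspace $\mathrm{span}\{f_n\}$, writing $g = \sum_n b_n f_n$,
\begin{equation*}
\tilde T g = \underbrace{\frac1\lambda\sum_n b_n c_n f_n}_{=:Ag} \;-\; \underbrace{\sum_n b_n K_n \eta_n}_{=:Bg} .
\end{equation*}
I would bound $A$ and $B$ separately as maps into $H^r_1$ (resp.\ $H^{-1+r}_1$), check they extend continuously, note they agree with the already bounded operator $\tilde T$ (Lemma \ref{lem-rbheat-1}(4) with $m=0$, $s=0$, resp.\ $s=-1$) on the dense set, and conclude.

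For $B$: $(K_n)$ is bounded (Section \ref{subsec-Kn-bound}), so $\|Bg\|_{H^r_1}\le C\sum_n|b_n|\,\|\eta_n\|_{H^r_1}\le C\|g\|_{L^2_1}\big(\sum_n\|\eta_n\|_{H^r_1}^2\big)^{1/2}$, and $\sum_n\|\eta_n\|_{H^r_1}^2<\infty$ by Lemma \ref{lem-rbheat-2}(1) with $m=0$; for the $H^{-1}$ version split $\sum_n|b_n|\,\|\eta_n\|_{H^{-1+r}_1} = \sum_n(n^{-1}|b_n|)(n\|\eta_n\|_{H^{-1+r}_1})\le\|g\|_{H^{-1}_1}\big(\sum_n\|n\eta_n\|_{H^{-1+r}_1}^2\big)^{1/2}$ and use Lemma \ref{lem-rbheat-2}(2). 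For $A$: $\|Ag\|_{H^r_1}^2 = \lambda^{-2}\sum_n n^{2r}b_n^2 c_n^2\le\lambda^{-2}\|g\|_{L^2_1}^2\sup_n(n^{2r}c_n^2)$, and likewise $\|Ag\|_{H^{-1+r}_1}^2 = \lambda^{-2}\sum_n(n^{-1}b_n)^2(n^rc_n)^2\le\lambda^{-2}\|g\|_{H^{-1}_1}^2\sup_n(n^{2r}c_n^2)$, so it suffices to know $(n^rc_n)_n\in\ell^\infty$. This is the one point that is not immediate — it fails for a generic $\ell^2$ sequence — and it is the main obstacle. I would obtain it by revisiting the argument that gave $(c_n)\in\ell^2$ in Section \ref{subsec-Kn-bound}: the right-hand side of \eqref{eq:cnqnl21} equals $\sum_n(a_nf_n-\lambda q_n) = -\lambda\sum_n\eta_n$, which by Lemma \ref{lem-rbheat-2}(3) (case $m=0$) belongs to $H^r_1$ for every $r\in[0,1/2)$; since $\{n^{-r}q_n\}_n$ is a Riesz basis of $H^r_1$ (Lemma \ref{lem-rbheat-1}(3) with $m=0$, $s=r$), the convergent series $\sum_n c_n q_n = \sum_n(n^rc_n)(n^{-r}q_n)$ necessarily has $\ell^2$ coefficients, i.e.\ $(n^rc_n)_n\in\ell^2\subset\ell^\infty$.

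Combining the two bounds, $\tilde T:L^2_1\to H^r_1$ and $\tilde T:H^{-1}_1\to H^{-1+r}_1$ are continuous for every $r\in[0,1/2)$. Fixing any $r\in(0,1/2)$, the embeddings $H^r_1\hookrightarrow L^2_1$ and $H^{-1+r}_1\hookrightarrow H^{-1}_1$ are compact (the eigenvalues $n^2\to+\infty$, which is Rellich's theorem in this setting), hence $\tilde T$ is compact on $L^2_1$ and on $H^{-1}_1$. Therefore $T=\mathrm{Id}+\tilde T$ is a Fredholm operator (of index $0$) on $L^2_1$ and on $H^{-1}_1$, which is the claim. As noted, the only genuinely delicate step is the diagonal part $A$, i.e.\ upgrading $(c_n)\in\ell^2$ to $(n^rc_n)\in\ell^2$; the rest is bookkeeping with the smoothing estimates and the boundedness of $(K_n)$ and $1/a_n$.
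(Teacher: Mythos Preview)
Your proof is correct and follows essentially the same route as the paper: split $\tilde T f_n$ into a ``smoothing'' part built from $\eta_n=q_n-a_nf_n/\lambda$ (handled via Lemma~\ref{lem-rbheat-2}(1)--(2)) and a ``diagonal'' part controlled by $\{n^r c_n\}\in\ell^\infty$, then use compact embedding. Your regrouping $\tilde T f_n=\frac{c_n}{\lambda}f_n-K_n\eta_n$ is algebraically equivalent to the paper's $\tilde T f_n=\frac{\lambda}{a_n}\eta_n+\frac{c_n}{a_n}q_n$, and you are in fact more explicit than the paper about why $\{n^r c_n\}\in\ell^2$ (the paper cites \eqref{eq:c_n}, which literally only gives $r=0$; your argument via Lemma~\ref{lem-rbheat-2}(3) for general $r\in[0,1/2)$ is exactly the intended upgrade).
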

 
To prove this Lemma, we need the following result: 
 
\begin{lem}\label{lem-qn-fn-l2-hr-bn} Let $r\in [0, 1/2)$. \textcolor{black}{There exists a constant $C>0$ such that}
 \begin{equation}
     \left\|\sum_n b_n (q_n-\frac{a_n f_n}{\lambda})\right\| _{H^r_1}^2\leq C \sum_{n} b_n^2, \quad \forall (b_n)\in \ell^2(\N).
       \end{equation}
 \end{lem}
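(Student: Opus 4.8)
The plan is to reduce the stated Bessel-type bound to the ``smoothing'' estimate already obtained in Lemma~\ref{lem-rbheat-2}~(1) (in the case $m=0$, which is the setting of this whole section), together with the elementary fact that a family of vectors whose norms are square-summable is automatically a Bessel sequence. Concretely, write $v_n := q_n - a_n f_n/\lambda = \sum_{p\neq n} a_p f_p/(p^2+\lambda-n^2)$, and recall from Lemma~\ref{lem-rbheat-2}~(1) that $C_0 := \sum_{n\in\N^*}\|v_n\|_{H^r_1}^2 < +\infty$ for every $r\in[0,1/2)$.

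First I would establish the bound for finitely supported sequences $(b_n)$, where no convergence issue arises: by the triangle inequality followed by Cauchy--Schwarz,
\begin{equation*}
  \Big\|\sum_n b_n v_n\Big\|_{H^r_1} \le \sum_n |b_n|\,\|v_n\|_{H^r_1} \le \Big(\sum_n b_n^2\Big)^{1/2}\Big(\sum_n \|v_n\|_{H^r_1}^2\Big)^{1/2} = C_0^{1/2}\Big(\sum_n b_n^2\Big)^{1/2}.
\end{equation*}
Applying the same inequality to tails shows that the partial sums of $\sum_n b_n v_n$ are Cauchy in $H^r_1$ for any $(b_n)\in\ell^2(\N^*)$; hence the series converges in $H^r_1$, and the bound passes to the limit. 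This already proves the lemma, with $C = \sum_n \|q_n - a_n f_n/\lambda\|_{H^r_1}^2$.

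If one prefers an argument independent of Lemma~\ref{lem-rbheat-2}, the same conclusion follows directly from Schur's test. Using $\|f_p\|_{H^r_1}=p^r$ and $|a_p|\le C$, one has $\big\|\sum_n b_n v_n\big\|_{H^r_1}^2 = \sum_p p^{2r}a_p^2\big(\sum_{n\neq p} b_n/(p^2+\lambda-n^2)\big)^2 \le C\,\|\mathcal{M}b\|_{\ell^2}^2$ for the nonnegative matrix $\mathcal{M}_{p,n} := \mathbf{1}_{\{p\neq n\}}\,p^r/|p^2+\lambda-n^2|$. Its row sums are $p^r\sum_{n\neq p}1/|p^2+\lambda-n^2| = p^r(S_p^1+S_p^2+S_p^3) \le C\,p^{r-1}(1+\log p)$, uniformly bounded by the estimates of the $S_p^i$ in the proof of Lemma~\ref{lem-rbheat-2}~(3); and splitting the column sum over the three regions $p<n$, $n< p<2n$, $p\ge 2n$ (using \eqref{eq:manipj} on the first one, $\frac{p^r}{|p^2+\lambda-n^2|}\le \frac{2n^r}{n(n-p)}$) gives $\sum_{p\neq n} p^r/|p^2+\lambda-n^2| \le C\,n^{r-1}(1+\log n)$, which is uniformly bounded since $r<1$. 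Schur's test then yields $\|\mathcal{M}\|_{\ell^2\to\ell^2}^2<+\infty$.

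There is essentially no hard step here: the only points requiring (routine) care are the justification that $\sum_n b_n v_n$ genuinely converges in $H^r_1$ rather than merely formally, and --- in the self-contained variant --- the column-sum estimate, which is the same kind of three-region computation, relying on \eqref{eq:manipj}, that has already appeared several times. The hypothesis $r<1/2$ is comfortably sufficient for both routes; the restriction to $[0,1/2)$ is inherited from the applications of this lemma (and from Lemma~\ref{lem-rbheat-2}~(1)), not from the present argument.
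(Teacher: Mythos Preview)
Your first proof is correct and uses the same key ingredient as the paper (Lemma~\ref{lem-rbheat-2}~(1)), though the route is organised a little differently. The paper first rewrites the sum in the orthonormal basis $\{f_p\}$ as
\[
\Big\|\sum_p a_p f_p \sum_{n\neq p}\frac{b_n}{p^2+\lambda-n^2}\Big\|_{H^r_1}^2 \;\le\; C\sum_p p^{2r}\Big(\sum_{n\neq p}\frac{b_n}{p^2+\lambda-n^2}\Big)^2,
\]
then applies Cauchy--Schwarz in the inner $n$-sum and uses that $\sum_p p^{2r}\sum_{n\neq p}(p^2+\lambda-n^2)^{-2}<\infty$, which is precisely Lemma~\ref{lem-rbheat-2}~(1) with $m=0$ and $a_n\equiv 1$. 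Your version bypasses the coefficient expansion by invoking the general principle that any family with square-summable norms is automatically a Bessel sequence (triangle inequality followed by Cauchy--Schwarz at the level of vectors), and appeals to Lemma~\ref{lem-rbheat-2}~(1) as stated. Both reach the same constant $C=\sum_n\|q_n-a_nf_n/\lambda\|_{H^r_1}^2$; yours is slightly more conceptual, the paper's slightly more explicit. Your Schur-test alternative is also valid but not needed once Lemma~\ref{lem-rbheat-2}~(1) is available.
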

 \begin{proof}[Proof of Lemma \ref{lem-qn-fn-l2-hr-bn}]
  In fact, we have \textcolor{black}{from the definition of $\{q_{n}\}_{n\in\mathbb{N}^{*}}$, the Riesz basis property of $\{f_{p}\}_{p\in\mathbb{N}^{*} }$ and the growth condition \eqref{cond-an} for $\{a_{p}\}_{p\in\mathbb{N}^{*}}$:}
 \begin{align*}
      \left\|\sum_n b_n (q_n-\frac{a_n f_n}{\lambda})\right\| _{H^r_1}^2&=   \left\|\sum_p a_pf_p \sum_{n\neq p} \frac{b_n}{p^2+\lambda-n^2}\right\|_{H^r}^2,\\
      &\leq C \sum_p p^{2r} \left(\sum_{n\neq p} \frac{b_n}{p^2+\lambda-n^2} \right)^2,\\
      &\leq C \sum_p p^{2r} \left(\sum_{n\neq p} b_n^2\right)  \left(\sum_{n\neq p} \frac{1}{(p^2+\lambda-n^2)^2} \right), \\
      &\leq C \left(\sum_{n} b_n^2\right) \sum_p p^{2r}  \left(\sum_{n\neq p} \frac{1}{(p^2+\lambda-n^2)^2} \right), \\
      &\leq C \sum_{n} b_n^2 ,
 \end{align*}
 where in the last step we have used Lemma \ref{lem-rbheat-2} (1) (with $m=0$ and $a_n=1$).
  \end{proof}
 
 \begin{proof}[Proof of Lemma \ref{lem-key-compact}]
i) We first consider the $L^2_1$ case. We notice that
 \[\tilde{T} f_n= -K_n q_n- f_n= \frac{\lambda+ c_n}{a_n}q_n- f_n= \left(\frac{\lambda}{a_n} q_n- f_n\right)+ \frac{c_n}{a_n} q_n.\] 
 
Therefore, for any $f=\sum_n b_n f_n\in L^{2}_1$  satisfying $\{b_n\}_{n\in \N^*}\in\ell^2( \N^*)$, we have that 
\begin{align*}
    \left\|\tilde{T} \sum_n b_n f_n\right\|_{H^r_1}^2=  \left\|\sum_n  \frac{b_n}{a_n}(\lambda q_n- a_n f_n)+  \frac{b_n}{a_n} c_n q_n\right\|_{H^r_1}^2.
\end{align*}
On the one hand, according to Lemma \ref{lem-rbheat-1} (5) the case that $m=0$, we know that 
\begin{align*}
    \left\|\sum_n  \frac{b_n}{a_n}(\lambda q_n - a_n f_n)\right\|_{H^r_1}^2\leq C \left(\sum_n b_n^2\right)\left(\sum_n \|\lambda q_n -f_n\|_{H^r_1}^2\right)\leq C  \sum_n b_n^2=C \|f\|^2_{L^2_1}.
\end{align*}
On the other hand, 
according to Lemma \ref{lem-rbheat-1} (3) with $m=0$ and $s=r$, we know that $\{n^{-r} q_n\}$ is a Riesz basis of $H^r_1$. Thus 
\begin{align*}
    \left\|\sum_n  \frac{b_n}{a_n} c_n q_n\right\|_{H^r_1}^2&=  \left\|\sum_n  (b_n a_n^{-1} c_n n^r) (n^{-r} q_n)\right\|_{H^r_1}^2 \\
    &\leq C \sum_n (b_n a_n^{-1} c_n n^r)^2 \\
     &\leq C \sum_n (b_n)^2 (c_n n^r)^2 \\
    &\leq C \sum_n b_n^2 \\
    & = C \|f\|^2_{L^2_1},
\end{align*}
where we have used the fact that $\{c_n n^r\}_{n\in \N^*}\in\ell^2(\N^{*}) \subset l^{\infty}(\N^{*})$ which is proved in Section \ref{subsec-Kn-bound}, equation \eqref{eq:c_n}. \textcolor{black}{Hence, the operator $\tilde T: L^{2}_{1}\rightarrow H^{r}_{1}$ is continuous}. By compactness of the injection $H^{r}_{1}\rightarrow L^2_1$,  $\tilde T: L^{2}_{1}\rightarrow L^2_{1}$ is compact.\\

ii) Next we prove the $H^{-1}_1$ case. For any 
$$f=\sum_n b_n f_n= \sum_n (b_n/n) (n f_n)\in H^{-1}_1,$$
we know that    $\{b_n/n\}_{n\in \N^*}\in\ell^2(\N^*)$, thus
\begin{align*}
    \left\|\tilde{T} \sum_n b_n f_n\right\|_{H^{-1+r}_1}^2=  \left\|\sum_n  \frac{b_n}{a_n}(\lambda q_n- a_n f_n)+  \frac{b_n}{a_n} c_n q_n\right\|_{H^{-1+r}_1}^2.
\end{align*}
On the one hand, according to Lemma \ref{lem-rbheat-2} (2) the case that $m=0$, we know that 
\begin{align*}
     \Big\|\sum_n  \frac{b_n}{n a_n}(n(\lambda q_n - a_n f_n)  & \Big\|_{H^{-1+r}_1}^2 \\
     &\leq C \left(\sum_n \left(\frac{b_n}{n}\right)^2\right)\left(\sum_n \|n(\lambda q_n -a_n f_n)\|_{H^{-1+r}_1}^2\right)\\
    &\leq C  \sum_n \left(\frac{b_n}{n}\right)^2 \\
    &\leq C \|f\|_{H^{-1}_1}^2.
\end{align*}
On the other hand, 
according to Lemma \ref{lem-rbheat-1} (3) the case that $m=0$ and $s=-1+r$, we know that $\{n^{1-r} q_n\}$ is a Riesz basis of $H^{-1+r}_1$. Thus 
\begin{align*}
    \left\|\sum_n  b_n  a_n^{-1} c_n q_n\right\|_{H^{-1+r}_1}^2&=  \left\|\sum_n  (b_n a_n^{-1} c_n n^{r-1}) (n^{1-r} q_n)\right\|_{H^{-1+r}_1}^2, \\
    &\leq C \sum_n \left(\frac{b_n}{n} a_n^{-1} c_n n^{r}\right)^2, \\
    &\leq C \sum_n \left(\frac{b_n}{n}\right)^2 \\
    &\leq C \|f\|_{H^{-1}_1}^2,
\end{align*}
where, again,  we have used the fact that $\{c_n n^r\}_{n\in \N^*}\in\ell^2(\N^{*}) \subset l^{\infty}(\N^{*})$. 

Therefore, for $r\in (0, 1/2)$, $T:H^{-1}_1\rightarrow H^{-1+r}_1$ is continuous. Since the inclusion $H^{-1+r}_1\rightarrow H^{-1}_1$ is compact, this concludes the proof of Lemma \textcolor{black}{\ref{lem-key-compact}}.

 \end{proof}
 
\textcolor{black}{We now prove the following invertibility result on $H^{-1}_{1}$.
\begin{lem}\label{lem:H-1}
$T :H^{-1}_{1}\rightarrow H^{-1}_{1}$ is an isomorphism.
\end{lem}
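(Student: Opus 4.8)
The plan is to combine the Fredholm structure of $T$ with an explicit formula for the feedback gains. By Lemma~\ref{lem-key-compact} we may write $T=\mathrm{Id}+\tilde T$ on $H^{-1}_1$ with $\tilde T$ compact, so $T$ is a Fredholm operator of index $0$; hence $T$ is an isomorphism of $H^{-1}_1$ as soon as $\ker T=\{0\}$, and the whole proof reduces to injectivity.

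To analyse the kernel I would use $Tf_n=-K_nq_n$ together with the fact that $\{nq_n\}_{n\in\N^*}$ is a Riesz basis of $H^{-1}_1$ (Lemma~\ref{lem-rbheat-1}(3) with $m=0$, $s=-1$). Writing $f=\sum_n b_nf_n\in H^{-1}_1$, so that $\{b_n/n\}_n\in\ell^2$, and using $\sup_n|K_n|<\infty$ from Section~\ref{subsec-Kn-bound}, the series
\[
 Tf=-\sum_{n\in\N^*}\frac{b_nK_n}{n}\,(nq_n)
\]
converges in $H^{-1}_1$ with $\ell^2$ coefficients; since $\{nq_n\}_n$ is a Riesz basis, $Tf=0$ forces $b_nK_n=0$ for every $n$. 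Thus $\ker T=\overline{\mathrm{span}}\{f_n:K_n=0\}$, and it remains to prove $K_n\neq0$ for all $n$.

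For this last point I would make the gains explicit. Recall from the Remark in Section~\ref{subsec-Kn-bound} that $d_n:=-a_nK_n=\lambda+c_n$ does not depend on the admissible choice of $\{a_n\}$ and is the unique sequence with $\{d_n/n\}_n\in\ell^2$ solving $\sum_{n\in\N^*}d_n(p^2+\lambda-n^2)^{-1}=1$ for all $p\in\N^*$. I claim that $d_n=\delta_n$, where
\[
 \delta_n:=\lambda\prod_{\substack{p\in\N^*\\ p\neq n}}\frac{n^2-p^2-\lambda}{n^2-p^2}.
\]
Indeed, the product converges absolutely because $\sum_{p\neq n}|n^2-p^2|^{-1}=O(n^{-1}\log n)<\infty$, the sequence $(\delta_n)$ is bounded (so $\{\delta_n/n\}_n\in\ell^2$), and the Mittag--Leffler expansion of the meromorphic function $w\mapsto\prod_{p}\frac{w-p^2-\lambda}{w-p^2}$, which tends to $1$ at infinity and whose residue at $w=p^2$ is $-\delta_p$, reads $\sum_n\delta_n(w-n^2)^{-1}=1-\prod_{p}\frac{w-p^2-\lambda}{w-p^2}$; evaluating at $w=p^2+\lambda$, where the $p$-th factor vanishes, yields $\sum_n\delta_n(p^2+\lambda-n^2)^{-1}=1$. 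By the uniqueness recalled above, $d_n=\delta_n$, and since $\lambda\notin\mathcal N$ no factor $n^2-p^2-\lambda$ vanishes, so $\delta_n\neq0$ and $K_n=-\delta_n/a_n\neq0$ for every $n$.

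Combining the two parts, $\ker T=\{0\}$, so $T$ is injective and, being Fredholm of index $0$, an isomorphism of $H^{-1}_1$. The genuinely non-formal step is the last one: the Fredholm structure by itself does not prevent a single gain $K_{n_0}$ from vanishing (which would place $f_{n_0}$ in the kernel), so one really has to produce the closed-form expression for $d_n$ and verify the Mittag--Leffler identity together with the absolute convergence and non-vanishing of the product at \emph{every} frequency, which is precisely where the hypothesis $\lambda\notin\mathcal N$ enters in an essential way.
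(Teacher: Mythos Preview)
Your argument is correct and takes a genuinely different route from the paper. Both proofs start from the Fredholm structure of Lemma~\ref{lem-key-compact}, but then diverge. The paper shows $\ker T^*=\{0\}$ via an operator-theoretic argument: using the operator equality $T(A+\phi K+\lambda I+\rho I)=(A+\rho I)T$ and perturbation theory to produce a $\rho$ for which $(A+\rho I)^{-1}$ maps the finite-dimensional space $\ker T^*$ into itself, which forces any nonzero element of $\ker T^*$ to be an eigenfunction $f_k$, subsequently excluded by $T\phi=\phi$. You instead attack $\ker T$ directly: the Riesz basis property of $\{nq_n\}$ shows $\ker T$ is exactly spanned by those $f_n$ with $K_n=0$, and you rule out $K_n=0$ by exhibiting the closed form $-a_nK_n=\lambda\prod_{p\neq n}\frac{n^2-p^2-\lambda}{n^2-p^2}$. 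Your approach is more concrete and actually yields an explicit expression for the feedback gains (which is relevant to the quantitative questions raised in Section~\ref{subsection:quantitative}); the paper's approach is more abstract but does not rely on the explicit shape $\lambda_n=-n^2$ and would transfer more readily to the general parabolic operators discussed in the conclusion. It is also worth noting that the logical order is reversed: the paper first proves invertibility on $H^{-1}_1$ and only afterwards \emph{deduces} $K_n\neq 0$ in Section~\ref{sec-T-inv-L2}, whereas you prove $K_n\neq 0$ first and derive invertibility from it.

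One point in your sketch that deserves a line of justification is the Mittag--Leffler identity: writing the product in closed form as $F(w)=\dfrac{\sqrt{w}\,\sin(\pi\sqrt{w-\lambda})}{\sqrt{w-\lambda}\,\sin(\pi\sqrt{w})}$ (via the Weierstrass product for $\sin$) makes both the limit $F(w)\to 1$ and the partial-fraction expansion rigorous, and shows directly that $\delta_n\neq 0$ is equivalent to $\sin(\pi\sqrt{n^2-\lambda})\neq 0$, i.e.\ to $\lambda\notin\mathcal N$.
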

}

\begin{proof}[Proof of Lemma \ref{lem:H-1}]
Since $T: H^{-1}_1\rightarrow H^{-1}_1$ is a Fredholm operator of index 0 \textcolor{black}{(i.e. can be written as the sum of the identity and a compact operator)}, thanks to Lemma \ref{lem-key-compact}, it suffices to show that $\ker T^*=\{0\}$ to conclude. {\color{black} By definition of Fredholm operators, we know that both $\ker T$ and coKer $T$ have finite dimension. 
Moreover, since the adjoint of a Fredholm operator is still a Fredholm operator (by Schauder's theorem, the adjoint of a compact operator is still compact), $\ker T^*$ is of finite dimension.
More precisely, we have
\begin{equation*}
    \textrm{ dim ker }T= \textrm{ dim coKer }T= \textrm{ dim ker }T^*= \textrm{ dim coKer }T^* < +\infty. 
\end{equation*}
}  In the following we mimic the method of \textcolor{black}{\cite{CGM}} to show that for the operator $T: H^{-1}_1\rightarrow H^{-1}_1$ we have that  $\ker T^*= \{0\}$.
   The proof is divided in three steps
   \begin{itemize}
       \item[1)] There exists $\rho\in \mathbb{C}$ such that 
       \begin{gather*}
           A+ \textcolor{black}{\phi_{1}}K+\lambda \textcolor{black}{Id} +\rho \textcolor{black}{Id}: H^1_1\rightarrow H^{-1}_1 \textrm{ is invertible}, \\
            A+ \rho \textcolor{black}{Id}: H^1_1\rightarrow H^{-1}_1 \textrm{ is invertible}.
       \end{gather*}
         \item[2)] {\color{black} For such a complex number $\rho$,} $\ker T^*$ is stable under $(A+\rho \textcolor{black}{Id})^{-1}$. As it is finite-dimensional, \textcolor{black}{ $(A+\rho \textcolor{black}{Id})^{-1}$ has an eigenvector in $\ker T^*$}, $i.e.$ there exists $h\in \ker T^\ast$ and $\mu\neq 0$ such that $(A+\rho \textcolor{black}{Id})^{-1}h= \mu h$. Hence $h= f_k$ for some $k\in \N^*$.
          \item[3)] By adapting the $T\textcolor{black}{\phi_{1}}=\textcolor{black}{\phi_{1}}$ condition\textcolor{black}{, we} show that  $h= f_k$ is not in $\ker T^*$.
   \end{itemize}

   {\bf 1)}
   As the spectrum of  $A+\rho \textcolor{black}{Id}$ is explicit and discrete, in the following we mainly focus on the study of the spectrum of  $A+\phi K+\lambda \textcolor{black}{Id}+ \rho \textcolor{black}{Id}$. {\color{black} By denoting $z:= \lambda+ \rho$}, we try to investigate the invertibility of $\textcolor{black}{Id}+A^{-1}\phi K+z A^{-1}$ \textcolor{black}{in the $H^1_1$ space, with {\color{black} some} $z\in \mathbb{C}$}, {\color{black}which further gives some $\rho= z-\lambda$ such that the investigated two operators are invertible.}  Let us remark here that as working with $H_1^k$ spaces, $A$ is automatically invertible in this space, while when dealing with $H_2^k$ spaces, $A$ has a trivial kernel $1$, in this case we will replace $A$ by $\tilde A:= A+ \delta$ with $\delta\neq 0$ sufficiently close to 0. For such a purpose we  consider two cases. 
   
   i) If $K(A^{-1}\phi )\neq -1$, then we \textcolor{black}{know} that the bounded operator $\textcolor{black}{Id}+A^{-1}\phi K$ is invertible. In fact, for any $f\in H^1_1$, we can check that 
   \begin{equation}
       \varphi:= f-\frac{A^{-1}\phi  (Kf)}{1+ K(A^{-1}\phi )}\in H^1_1,
   \end{equation}
   solves
   \begin{equation}
       (\textcolor{black}{Id}+A^{-1}\phi K) \varphi= f.
   \end{equation}
   Since $\textcolor{black}{Id}+A^{-1}\phi K$ is invertible, and \textcolor{black}{since} $A^{-1}$ is also a bounded operator (indeed even compact) on $H^1_1$, according to the openness  of invertible operator, there exists a small ball $B_{\varepsilon}(0)$ around 0, such that 
   \begin{equation}
       (\textcolor{black}{Id}+A^{-1}\phi K)+ z A^{-1} \textrm{ is  invertible in } H^1_1, \;  \forall z\in  B_{\varepsilon}(0).
   \end{equation}
   
   ii) If $K(A^{-1}\phi )= -1$, then we can easily check that 0 is en eigenvalue of $\textcolor{black}{Id}+A^{-1}\phi K$ with multiplicity 1 and the eigenspace \textcolor{black}{is} generated by $A^{-1}\phi $.  
   
   According to the perturbation theory, see for example \cite{MR47254}, there exist  small open neighborhoods $\Omega$ and  $\widetilde{\Omega}$ of $0\in \mathbb{C}$ satisfying
   \begin{gather*}
       (\textcolor{black}{Id}+ A^{-1}\phi  K+ zA^{-1}) y(z)= \lambda(z) y(z), \\
       y(z): z\in \Omega\mapsto y(z)\in H^1_1 \textrm{ is holomorphic,} \\
        \lambda(z): z\in \Omega\mapsto \lambda(z)\in \widetilde{\Omega}\subset \mathbb{C} \textrm{ is holomorphic,}\\
        \lambda(0)=0,\quad y_0:= y(0)= A^{-1}\phi ,
   \end{gather*}
in such a way that {\color{black} for any $z\in \Omega$}, $\lambda(z)$ is the unique eigenvalue inside $\widetilde{\Omega}$. Recall that $\lambda(0)=0$ and that the zero points of any non-trivial (not identically zero) holomorphic fucntion are isolated.

   If further there exists a small neighborhood $\omega$ of $0\in \mathbb{C}$ such that $\lambda(z)=0$ for any $z\in \omega$, then
   we are able to decompose $y(z)\in H^1_1$ is power series as
   \begin{equation}
       y(z)= y_0+ \sum_{k=1}^{+\infty} z^k y_k, \; \; y_k\in H^1_1.
   \end{equation}
   
   By matching the coefficients of the power series, we get 
   \begin{equation}
       (\textcolor{black}{Id}+ y_0 K)y_k+ A^{-1} y_{k-1}=0, \; \forall k\geq 1.
   \end{equation}
  By adapting $A^{-1}$ and $K$ to the preceding equation we  conclude that 
  \begin{equation}
      K(A^{-1}y_k)=0, \; \forall k\geq 0.
  \end{equation}
  Then by successively adapting $A^{-1}$ and $K$ to the same equation we arrive at 
   \begin{equation}
      K(A^{-n}y_k)=0, \; \forall k\geq 0, \forall n\geq 1,
  \end{equation}
  which in particular yields 
  \begin{equation}
      K(A^{-n} y_0)=0,  \forall n\geq 1.
  \end{equation}
  
  The preceding equality implies
  \begin{equation}
      \sum_n \frac{a_n K_n}{n^{2l}}=0, \forall l\geq 2.
  \end{equation}
  Again using the holomorphic function technique, we conclude that $a_n K_n=0$, which is a contradiction.
  
   Therefore, there exists a sequence of $\{z_k\}$ converging to 0 such that $\lambda(z_k)\neq 0$. Indeed, thanks to  the fact that $\lambda$ is holomorphic with $0$ being a zero point, in this case we even have that $\lambda(z)\neq 0$ in $\omega_1\setminus\{0\}$ with $\omega_1$ being a small neighborhood of $0$. Then, since $\lambda(z_k)\neq 0$ and $\lambda(z_k)$ is the unique eigenvalue inside $\widetilde{\Omega}$,  for $z_k$ sufficiently close to 0 we know that $\textcolor{black}{Id}+A^{-1}\phi K+ z_k A^{-1} $ is invertible. {\color{black} Thus ${Id}+A^{-1}\phi K+ \lambda A^{-1}+ (z_k-\lambda) A^{-1}$ is invertible. } As the spectrum of $A+ \rho \textcolor{black}{Id}$ is discrete, we can find a $\rho$ (more precisely, some $z_k- \lambda$), such that both $A+ \rho \textcolor{black}{Id} $ and $A+ \phi K+ \lambda \textcolor{black}{Id}+ \rho \textcolor{black}{Id}$ are invertible.   \\

    {\bf 2)} Because 
    \begin{equation}
        T(A+\phi K+\lambda \textcolor{black}{Id}+ \rho \textcolor{black}{Id})= AT+ \rho \textcolor{black}{T}: H^1_1\rightarrow H^{-1}_1,
    \end{equation}
    we have 
    \begin{equation}
        (A+ \rho \textcolor{black}{Id})^{-1} T= T(A+\phi K+\lambda \textcolor{black}{Id}+\rho \textcolor{black}{Id})^{-1}: H^{-1}_1\rightarrow H^{1}_1.
    \end{equation}
    
    Suppose that  $h\in \ker T^*$, then we deduce from the above operator equality that 
    \begin{align*}
        0&= \langle (A+\rho \textcolor{black}{Id})^{-1}T \varphi- T(A+\phi K+\lambda \textcolor{black}{Id}+\rho \textcolor{black}{Id})^{-1}\varphi, h\rangle_{H^{-1}_1}, \\
        &= \langle  \varphi, T^* (A+\rho \textcolor{black}{Id})^{-1} h\rangle_{H^{-1}_1}- \langle  \textcolor{black}{(A+\phi K+\lambda \textcolor{black}{Id}+\rho \textcolor{black}{Id})^{-1}}\varphi, T^*  h\rangle_{H^{-1}_1}, \\
         &= \langle  \varphi, T^* (A+\rho \textcolor{black}{Id})^{-1} h\rangle_{H^{-1}_1}, \quad \forall \varphi\in H^{-1}_1,
    \end{align*}
    where we have used the fact that $A$ is self-adjoint. 
    
   The above implies $(A+\rho \textcolor{black}{Id})^{-1} h\in$ $\ker T^*$, thus
    \begin{equation}
        (A+ \rho {Id})^{-1}: \ker T^*\rightarrow \ker T^*.
    \end{equation}
    
   Suppose that $\ker T^*$ is not reduced to $\{0\}$. Therefore,  because the  space $\ker T^*$ is of finite dimension \textcolor{black}{and not reduced to $\{0\}$} we can find an eigenfunction $(h, \mu)$, $h\neq 0$ and $\mu\neq 0$, such that \begin{equation}
       (A+\rho \textcolor{black}{Id})^{-1}h= \mu h \textcolor{black}{\textrm{ and } h\in \ker T^{*}}.
   \end{equation} 
   
    We immediately deduce that $h$ is an eigenfunction of $A=\Delta$ in $H^{-1}_1$. Moreover, we know from the definition of $h$ that
    \[h=\mu (A+\rho {Id}) h, \quad i.e. \quad A h= \frac{1-\rho}{\mu} h,\]
    and $h$ is a eigenfunction of $A$. \textcolor{black}{We notice that the subspaces of $H^{1}_{1}$ that are also eigenspaces of $\Delta$ have dimension 1 (the eigenvalues are not degenerate in $H^1_{1}$).} In particular the dimension of the eigenspace of $(1-\rho)/\mu$ is one, \textcolor{black}{and therefore} $h= C f_k$ for some $k\in \N^*$.
    \\
    
     {\bf 3)} From the above, we get
     \begin{equation}
         \langle T\varphi, f_k \rangle_{H^{-1}_1}=0, \; \forall \varphi\in H^{-1}_1. 
     \end{equation}
     Thanks to the fact that  $T\phi=\phi$ in $H^{-1}_1$, we can take $\varphi:= \phi$ to achieve
      \begin{equation}
         0=\langle T\phi, f_k \rangle_{H^{-1}_1}= \langle \phi, f_k \rangle_{H^{-1}_1}= \frac{a_k}{k^2}, 
     \end{equation}
     which is in contradiction with the fact that $a_n\neq 0$.
   \end{proof}

\subsection{Invertibility of the transformation $T$ on $H^s_1$ for any $s\in (-3/2, 3/2)$}\label{sec-T-inv-L2}

In the previous part we have proved the invertibility of the transformation $T$ on $H^{-1}$. \textcolor{black}{We will now show that this implies 
\begin{equation*}
    K_{n}\neq 0,\;\forall n\in\mathbb{N}^{*},
\end{equation*}
which will in turn imply the isomorphism property in $H^{s}_{1}$ for any $s\in(-3/2,3/2)$ thanks to Lemma \ref{lem-rbheat-1}.}
 
 Recall \textcolor{black}{that} 
 \begin{equation}
    T: n f_n\mapsto - K_n (n q_n),
\end{equation}
is a bounded operator form $H^{-1}_1$ to itself. By contradiction, suppose that for some $n_0$ we have $K_{n_0}=0$. \textcolor{black}{As $T:H_{1}^{-1}\rightarrow H_{1}^{-1}$ is an isomorphism there exists} $h\in H^{-1}_1$ such that
\[Th= n_0 q_{n_0}.\]
\textcolor{black}{By property of Riesz bases, there exists a unique $\{d_n\}_{n\in \N^*}\in\ell^2(\N^{*})$ such that}
\begin{equation}
    h=\sum_n d_n (n f_n)\in H^{-1}_1.
\end{equation}
Then 
\begin{equation}
   Th= \sum_n - d_n K_n (n q_n)
\end{equation}
which converges absolutely as $\{n q_n\}_{n\in \N^*}$ is a Riesz basis of $H^{-1}_1$. By definition of $h$, and by uniqueness of the decomposition, 
\[d_{n_0} K_{n_0}= 1,\]
which contradicts the assumption on $K_{n_0}$.
Consequently, for every $n\in \N^*$ we know that $K_n\neq 0$, thus $-a_n K_n\neq 0$.\\

We also know that 
\begin{equation}
    -a_n K_n= \lambda+ c_n \textrm{ with } \{c_n\}_{n\in \N^*}\in\ell^2(\N^{*}),
\end{equation}
which implies that at high frequency $\{|a_n K_n|\}_{n\geq M}$ is bounded away from 0 and also bounded from above. Therefore, $\{|a_n K_n|\}_{n\in \N^*}$ is bounded away from 0 and also bounded from above. Hence
\begin{equation}
    c<|K_n|<C, \quad \forall n\in \N^*. 
\end{equation}

By applying Lemma \ref{lem-rbheat-1} (4) with $m=0$ and $s\in (-3/2, 3/2)$, we get that $T:H^{s}_1\rightarrow H^{s}_1$ is an isomorphism \textcolor{black}{for any $s\in(-3/2,3/2)$}. The special case $s=0$ corresponds to the invertibility on the space $L^2_1$.\\

 \subsection{Conclusion}
 We are now in a position to prove Proposition \ref{thm-ope-t}.
 
 \begin{proof}
\textcolor{black}{The proof of Proposition \ref{thm-ope-t} is a consequence of the above subsections: we choose $K_{n}$ as in Sections \ref{sub:uniqueness}--\ref{subsec-Kn-bound}. We know from Section \ref{subsec-Kn-bound} that $K$ is a bounded functional on $H^{1/2+}_{1}$ and from Section \ref{sec-T-inv-L2}  we know that there exists $c>0$ and $C>0$ such that 
\begin{equation*}
    c<|K_{n}|<C,\quad \forall n\in \N^*. 
\end{equation*}
Consequently, from Lemma \ref{lem-rbheat-1} that $T$ is an isomorphism on $H^{s}_{1}$ for any $s\in(-3/2,3/2)$. Moreover from Lemma \ref{lem-op-eq-tbk} we know that for any $r\in(-1/2,1/2)$, and $\varphi\in H^{r+1}_1$ we have
\begin{equation*}
     (TA+ TBK)\varphi= (AT-\lambda T)\varphi\; \textrm{ in } H^{r-1}_1,
\end{equation*}
and from Lemma \ref{lem-tb=b}
\begin{equation}
    T\phi_{1}=\phi_{1}\;\text{ in }H^{-1/2-}_{1}.
\end{equation}
Finally from \eqref{eq:c_n} and the definition of $\{c_{n}\}_{n\in\mathbb{N}^{*}}$ given in \eqref{def-cn-kn}, for any $r\in[0,1/2)$ {\color{black} we have}
\begin{equation*}
\{(\lambda+a_{n}K_{n})n^{r}\}_{n\in\mathbb{N}^{*}}\in l^{2}(\N^{*}).    
\end{equation*}
Hence the operators $K$ and $T$ satisfy the properties announced in Proposition \ref{thm-ope-t} for $m=0$. Thanks to Lemma \ref{lem-rbheat-1} the same can be done identically in the case $m\neq0$. This ends the proof of Proposition \ref{thm-ope-t}.
}
 \end{proof}

 \section{Well-posedness results}
 \label{sec:well-pos}
 \subsection{Well-posedness of the  closed-loop systems of heat equations}
In this section we show that the closed-loop systems provided in Theorem \ref{thm-main-linear}   {\color{black}and Theorem \ref{thm-main-burgers} are} actually well-posed. 

{\color{black}We mainly focus on the well-posedness of the heat equation. In the next subsection a similar proof will yield the well-posedness of the viscous Burgers equation.}

It suffices to consider the case of $m=0$, while the other cases can be proved similarly.  Furthermore, for the ease of notations, here we only prove the following special case corresponding to $r=0$, while the other cases where $r\in (-1/2, 1/2)$ can be proved similarly. Inspired by the decomposition \eqref{eqk} and the fact that $H^{m}=H^{m}_{1}\oplus H^{m}_{2}$ it suffices to  consider the well-posedness  in $H^{m}_{1}$ and $H^{m}_{2}$ separately, which is given by the following lemma:
\begin{lem}[\textcolor{black}{Well-posedness of the odd and even part}: $m=0, r=0$]\label{lem-euofsol} 

{\color{black} Let $k\in \{1, 2\}$.}
 Let $y_0\in L^{2}_k$. Let $\phi\in H^{-1}_k$. Let $K_k: H^{3/4}_{k}\rightarrow \R$ be bounded. The equation
\begin{gather}\label{eq-phiky-l2}
    \begin{cases}
    \partial_t y-\Delta y= \phi_k K_k(y), \\
    y(0)= y_0,
    \end{cases}
\end{gather}
has a unique solution that is satisfied in $L^2_{loc}((0, +\infty); H^{-1}_k)$, \textcolor{black}{and}
\begin{equation}
      y(t)\in C^0([0, +\infty); L^2_k)\cap L^2_{loc}((0, +\infty); H^{1}_k)\cap H^1_{loc}((0, +\infty); H^{-1}_k).
  \end{equation}
\end{lem}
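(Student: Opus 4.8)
The plan is to construct the solution by a Banach fixed point argument on a short time interval and then continue it in time; the only point requiring care is that the feedback term $K_k(y(t))$ makes sense only once $y(t)\in H^{3/4}_k$, whereas a priori $y$ is merely continuous with values in $L^2_k$. Fix $T\in(0,1]$ and work in the Banach space $X_T:=C^0([0,T];L^2_k)\cap L^2((0,T);H^1_k)$ equipped with $\|y\|_{X_T}:=\|y\|_{C^0([0,T];L^2_k)}+\|y\|_{L^2((0,T);H^1_k)}$. The external ingredient is the classical $L^2$ theory of the linear inhomogeneous heat equation on $\T$: since $-\Delta$ is self-adjoint, nonnegative and with compact resolvent on $L^2_k$, for $y_0\in L^2_k$ and $f\in L^2((0,T);H^{-1}_k)$ there is a unique $y\in X_T$ with $\partial_t y\in L^2((0,T);H^{-1}_k)$ solving $\partial_t y-\Delta y=f$, $y(0)=y_0$, together with
\[
\|y\|_{X_T}+\|\partial_t y\|_{L^2((0,T);H^{-1}_k)}\le C\big(\|y_0\|_{L^2_k}+\|f\|_{L^2((0,T);H^{-1}_k)}\big),
\]
the constant $C$ being independent of $T\le 1$.

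The first step is to estimate the nonlocal source term. For $z\in X_T$ one has $z(t)\in H^1_k\subset H^{3/4}_k$ for a.e.\ $t$, and combining the boundedness of $K_k$ on $H^{3/4}_k$ with the interpolation inequality $\|z(t)\|_{H^{3/4}_k}\le\|z(t)\|_{L^2_k}^{1/4}\|z(t)\|_{H^1_k}^{3/4}$ and Hölder's inequality in $t$ gives
\[
\|K_k(z)\|_{L^2(0,T)}^2\le C\int_0^T\|z(t)\|_{L^2_k}^{1/2}\|z(t)\|_{H^1_k}^{3/2}\,\mathrm{d}t\le C\,T^{1/4}\,\|z\|_{C^0([0,T];L^2_k)}^{1/2}\,\|z\|_{L^2((0,T);H^1_k)}^{3/2},
\]
so that $\|K_k(z)\|_{L^2(0,T)}\le C\,T^{1/8}\,\|z\|_{X_T}$. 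Since $\phi_k\in H^{-1}_k$, this yields $\phi_k K_k(z)\in L^2((0,T);H^{-1}_k)$ with $\|\phi_k K_k(z)\|_{L^2((0,T);H^{-1}_k)}\le C\,T^{1/8}\,\|\phi_k\|_{H^{-1}_k}\,\|z\|_{X_T}$.

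Next I would define $\Lambda:X_T\to X_T$ by letting $\Lambda(z)$ be the solution of $\partial_t y-\Delta y=\phi_k K_k(z)$, $y(0)=y_0$, which is well defined and lands in $X_T$ by the first two steps. The map $\Lambda$ is affine and its linear part has operator norm at most $C\,\|\phi_k\|_{H^{-1}_k}\,T^{1/8}$ on $X_T$; choosing $T_0\in(0,1]$ small enough that this quantity is $<1$, the map $\Lambda$ is a contraction on $X_{T_0}$ and its unique fixed point is the desired solution on $[0,T_0]$. Since $T_0$ depends only on $\|\phi_k\|_{H^{-1}_k}$ and on the norm of $K_k$, and not on the data, one restarts from $y(T_0)\in L^2_k$ and iterates to reach a solution on $[0,+\infty)$, uniqueness on each subinterval patching to global uniqueness (alternatively, uniqueness follows directly from a Grönwall estimate for $\frac{d}{dt}\|y_1-y_2\|_{L^2_k}^2$, absorbing $|K_k(y_1-y_2)|\le C\|y_1-y_2\|_{H^{3/4}_k}\le\varepsilon\|y_1-y_2\|_{H^1_k}+C_\varepsilon\|y_1-y_2\|_{L^2_k}$ into the dissipation). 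Finally, from $\partial_t y=\Delta y+\phi_k K_k(y)$ with both terms on the right-hand side in $L^2_{loc}((0,+\infty);H^{-1}_k)$ one gets $y\in H^1_{loc}((0,+\infty);H^{-1}_k)$, while the inclusion $L^2((0,T);H^1_k)\cap H^1((0,T);H^{-1}_k)\hookrightarrow C^0([0,T];L^2_k)$ gives the continuity in time; hence $y$ has the announced regularity and satisfies the equation in $L^2_{loc}((0,+\infty);H^{-1}_k)$.

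I expect the main obstacle to be exactly the treatment of the nonlocal feedback $K_k(y)$: one must use the parabolic gain of regularity ($y\in L^2_tH^1_x$, even though $y(0)$ is only in $L^2_k$) together with the interpolation inequality to make $K_k(y(\cdot))$ square-integrable in time, and it is the resulting positive power $T^{1/8}$ that makes the map a contraction on short intervals. The linear parabolic estimates and the time continuation are routine.
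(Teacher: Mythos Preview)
Your proof is correct and follows essentially the same route as the paper: a Banach fixed-point argument in $C^0([0,T];L^2_k)\cap L^2((0,T);H^1_k)$, using the interpolation $\|z\|_{H^{3/4}}\le\|z\|_{L^2}^{1/4}\|z\|_{H^1}^{3/4}$ together with H\"older in time to produce the $T^{1/8}$ smallness. The only minor difference is in the continuation step: you exploit the affine structure of $\Lambda$ to get a data-independent time step and iterate directly, whereas the paper instead derives an a priori energy estimate $\frac{d}{dt}\|y\|_{L^2_k}^2\le -\|y\|_{H^1_k}^2+C\|y\|_{L^2_k}^2$ to rule out blow-up; your shortcut is legitimate here precisely because the feedback is linear, while the paper's estimate has the advantage of generalizing verbatim to the nonlinear Burgers case treated next.
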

{\color{black}
\begin{cor}[Case $m=0$, $r\in (-1/2, 1/2)$]\label{cor-euofsol}

{\color{black} Let $k\in \{1, 2\}$.}
 Let $r\in (-1/2, 1/2)$. Let $y_0\in H^{r}_k$. Let $\phi\in H^{-1/2-}_k$.
 Let $K_k: H^{1/2+}_{k}\rightarrow \R$ be bounded. 
 The equation \eqref{eq-phiky-l2}
has a unique solution such that the equation is satisfied in $L^2_{loc}((0, +\infty); H^{r-1}_k)$, and
\begin{equation}\label{es-sol-op-r}
      y(t)\in C^0([0, +\infty); H^r_k)\cap L^2_{loc}((0, +\infty); H^{r+1}_k)\cap H^1_{loc}((0, +\infty); H^{r-1}_k).
  \end{equation}
\end{cor}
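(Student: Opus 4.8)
The plan is to reproduce the argument of Lemma~\ref{lem-euofsol}, but with the energy space $L^2_k$ replaced by $H^r_k$ and the dual pair $(H^1_k,H^{-1}_k)$ replaced by $(H^{r+1}_k,H^{r-1}_k)$. Since $\Delta$ diagonalizes in the basis $\{f_n^k\}$, I would set up a Galerkin scheme: for $N\in\N$ let $P_N$ be the orthogonal projection onto $\vect\{f_n^k:\ n\le N\}$ and solve the finite-dimensional linear system
\[
\partial_t y^N-\Delta y^N=P_N\phi\,K_k(y^N),\qquad y^N(0)=P_N y_0 ,
\]
which has a unique global solution because it is linear. (For $k=2$ the constant mode $f_0^2$ is not seen by $\Delta$, but it is driven only by $a_0^2 K_k(y^N)$, which is controlled in $L^2_{loc}$, so it is absolutely continuous and bounded on bounded intervals; this contributes harmlessly to every estimate below.)

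The heart of the proof is a uniform a priori bound. Pairing the equation with $y^N$ in the $(H^{r+1}_k,H^{r-1}_k)$ duality and using that $\Delta$ is self-adjoint yields
\[
\frac12\frac{d}{dt}\|y^N\|_{H^r_k}^2+\|y^N\|_{H^{r+1}_k}^2
= K_k(y^N)\,\langle\phi,y^N\rangle_{H^{r-1}_k,H^{r+1}_k}.
\]
Here $r<1/2$ is used to guarantee $\phi\in H^{-1/2-}_k\subset H^{r-1}_k$, so that the right-hand side is $\le C\,|K_k(y^N)|\,\|y^N\|_{H^{r+1}_k}$, and $r>-1/2$ is used to pick $\varepsilon>0$ with $r<\tfrac12+\varepsilon<r+1$, so that $K_k$ is bounded on $H^{\frac12+\varepsilon}_k\supset H^{r+1}_k$; interpolation between $H^r_k$ and $H^{r+1}_k$ then gives $|K_k(y^N)|\le C\|y^N\|_{H^{1/2+\varepsilon}_k}\le C\|y^N\|_{H^r_k}^{\theta}\|y^N\|_{H^{r+1}_k}^{1-\theta}$ with $\theta=\tfrac12+\varepsilon-r\in(0,1)$. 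Plugging this in and absorbing the resulting factor $\|y^N\|_{H^{r+1}_k}^{2-\theta}$ into the dissipation term by Young's inequality yields
\[
\frac{d}{dt}\|y^N\|_{H^r_k}^2+\|y^N\|_{H^{r+1}_k}^2\le C\,\|y^N\|_{H^r_k}^2 ,
\]
so Gr\"onwall bounds $y^N$ in $C^0([0,T];H^r_k)\cap L^2((0,T);H^{r+1}_k)$ uniformly in $N$, for every $T>0$; the equation then bounds $\partial_t y^N$ in $L^2((0,T);H^{r-1}_k)$.

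With these bounds I would extract a subsequence converging weakly-$\ast$ in $L^\infty_tH^r_k$, weakly in $L^2_tH^{r+1}_k$, and with $\partial_t y^N$ weakly in $L^2_tH^{r-1}_k$; since $K_k$ is a fixed bounded linear functional, $K_k(y^N)\rightharpoonup K_k(y)$ in $L^2_{loc}$, which suffices to pass to the limit in the (linear) equation and identify the limit $y$ as a solution satisfying \eqref{es-sol-op-r}. Continuity in time and attainment of the initial datum follow from the embedding $L^2_tH^{r+1}_k\cap H^1_tH^{r-1}_k\hookrightarrow C^0_tH^r_k$. Uniqueness is immediate from linearity: the difference $w$ of two solutions with the same data satisfies the homogeneous version of \eqref{eq-phiky-l2}, and the very same estimate gives $\|w(t)\|_{H^r_k}^2\le C\int_0^t\|w(s)\|_{H^r_k}^2\,ds$, hence $w\equiv0$. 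Corollary~\ref{cor-euofsol} then follows by summing the $k=1$ and $k=2$ parts. The main obstacle is that this a priori estimate is borderline: the forcing term $\phi\,K_k(y)$ lies exactly in the space $H^{r-1}_k$ in which the equation is posed, with no regularity margin, so one must exploit the parabolic smoothing (the $L^2_tH^{r+1}_k$ bound) together with the interpolation bound on $K_k$ and absorb very carefully. This is precisely why the range $r\in(-1/2,1/2)$ --- exactly the set of $r$ for which $H^{r+1}_k\hookrightarrow H^{1/2+}_k$ and $H^{-1/2-}_k\hookrightarrow H^{r-1}_k$ --- is the natural range here.
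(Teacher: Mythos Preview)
Your proof is correct and complete. The only slip is a harmless swap of the interpolation exponents: with $s_0=\tfrac12+\varepsilon$ one has $\|y\|_{H^{s_0}_k}\le \|y\|_{H^r_k}^{r+1-s_0}\|y\|_{H^{r+1}_k}^{s_0-r}$, so the exponent you call $\theta$ is actually the one on $H^{r+1}_k$; the ensuing Young inequality still closes exactly as you say, yielding the $C\|y\|_{H^r_k}^2$ right-hand side. Your treatment of the zero mode for $k=2$ is also right --- it produces an extra $+\|y\|_{H^r_2}^2$ term in the energy identity, absorbed by Gr\"onwall.

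The route, however, differs from the paper's. You run a Galerkin scheme, derive the a priori $C^0_tH^r_k\cap L^2_tH^{r+1}_k$ bound directly on the approximants, and pass to the limit by weak compactness, using that $K_k$ is a fixed linear functional on $H^{r+1}_k$. The paper instead sets up a Banach fixed point: it proves the same energy inequalities for the linear inhomogeneous problem with forcing $\phi\,u(t)$, then defines the solution map $\mathcal{L}^r:z\mapsto y$ on a ball of $\mathcal{B}^r_T:=C^0([0,T];H^r_k)\cap L^2([0,T];H^{r+1}_k)$ and shows it is a contraction for small $T$ via the time-interpolation bound $\|z\|_{L^p(0,T;H^{s_0}_k)}\le \|z\|_{L^\infty_tH^r_k}^{r+1-s_0}\|z\|_{L^2_tH^{r+1}_k}^{s_0-r}$ with $p=2/(s_0-r)$, which produces a small factor $T^{1/2-1/p}$. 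The a priori estimate (identical to yours) then extends the solution globally. Both approaches hinge on the same interpolation idea and on the sharp range $r\in(-1/2,1/2)$; your Galerkin argument is arguably more direct for this purely linear problem, while the paper's contraction scheme has the advantage of transferring verbatim to the nonlinear Burgers case treated immediately afterwards.
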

}

\begin{remark}[\textcolor{black}{Cases $m\neq 0, r\in (-1/2, 1/2)$}]\label{rmk-key} {\color{black} Let $k\in \{1, 2\}$.}
For the other cases where $m\neq 0$,  we are dealing with $y_0\in H^{m+r}_k, \phi_k\in H^{m-1/2-}_k, K_k: H^{m+1/2+}_k\rightarrow \R$. Either we can perform the same proof with respect to the pivot space $H^m_k$ but, \textcolor{black}{this time,}   or we can consider the isomorphism $D^m: H^m_k\rightarrow L^2_k$:
\begin{equation}
    D^m: n^{-m} f_n^{k} \mapsto f_n^{k},
\end{equation}
{\color{black} with convention that $D^m(f^2_0)= f^2_0$},
\textcolor{black}{where we recall that $f_{n}^{k}$ is an eigenfunction of $A$ given by \eqref{def:eigenvec}.} {\color{black}Observe that $D^m$ commute with Laplacian,}
thus the equation \eqref{eq-phiky-l2} is equivalent to 
\begin{gather}\label{eq-phiky-hm}
    \begin{cases}
    \partial_t w-\Delta w= (D^m \phi_k) K_kD^{-m}(w), \\
    w(0)= D^{-m} y_0\in H^r_k,
    \end{cases}
\end{gather}
with $y= D^m w$, which goes back to the case of Corollary \ref{cor-euofsol}.
\end{remark}

{\color{black}
By combining Lemma \ref{lem-euofsol}, Corollary \ref{cor-euofsol}, Remark \ref{rmk-key} and the fact that $H^m= H^m_1\oplus H^m_2$, we immediately get the well-posedness of the equation  \eqref{clo-lop-sys-lin}.
\begin{cor}[\textcolor{black}{Well-posedness of the heat equation \eqref{clo-lop-sys-lin}}]\label{cor-euofsol.key}
Let $m\in \R$. Let $r\in (-1/2, 1/2)$. Let $y_0\in H^{m+r}$. Let $\phi_k \in H^{m-1/2-}_k$ for every $k\in \{1, 2\}$.
 Let $K_k: H^{m+ 1/2+}\rightarrow \R$ be bounded satisfying $K_k: H^{m+ 1/2+}_{3-k}\rightarrow 0$ for every $k\in \{1, 2\}$. 
 The equation \eqref{clo-lop-sys-lin}
has a unique solution such that the equation holds in the space $L^2_{loc}((0, +\infty); H^{m+ r-1})$, and
\begin{equation}
      y(t)\in C^0([0, +\infty); H^{m+r})\cap L^2_{loc}((0, +\infty); H^{m+r+1})\cap H^1_{loc}((0, +\infty); H^{m+r-1}).
  \end{equation}
\end{cor}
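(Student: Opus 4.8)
The plan is to reduce the statement to the single-control well-posedness already established, namely Corollary \ref{cor-euofsol} (which itself rests on Lemma \ref{lem-euofsol}) together with Remark \ref{rmk-key}, by exploiting the splitting $H^{m+r} = H^{m+r}_1 \oplus H^{m+r}_2$ into odd and even parts. First I would decompose $y_0 = y_0^1 + y_0^2$ with $y_0^k \in H^{m+r}_k$ and observe that the feedback source decouples: since $\phi_k \in H^{m-1/2-}_k$, since $\Delta$ maps $H^s_k$ into $H^{s-2}_k$, and since $K_k$ vanishes on $H^{m+1/2+}_{3-k}$ by hypothesis, the ansatz $y = y^1 + y^2$ with $y^k \in H^{m+r}_k$ turns \eqref{clo-lop-sys-lin} into the two independent scalar-control problems
\begin{equation}
    \partial_t y^k - \Delta y^k = \phi_k K_k(y^k), \qquad y^k(0) = y_0^k, \qquad k \in \{1,2\}.
\end{equation}

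Second, for each fixed $k$ I would invoke Remark \ref{rmk-key}: the isomorphism $D^m : H^m_k \to L^2_k$ commutes with $\Delta$, so setting $w^k := D^{-m} y^k$ transforms the $k$-th equation into one of the same form with $m=0$, with datum $D^{-m} y_0^k \in H^r_k$, potential $D^m \phi_k \in H^{-1/2-}_k$, and feedback $K_k D^{-m}$ bounded on $H^{1/2+}_k$. Corollary \ref{cor-euofsol} then provides a unique $w^k$, and hence a unique $y^k = D^m w^k$, with
\begin{equation}
    y^k \in C^0([0,+\infty); H^{m+r}_k) \cap L^2_{loc}((0,+\infty); H^{m+r+1}_k) \cap H^1_{loc}((0,+\infty); H^{m+r-1}_k),
\end{equation}
the equation being satisfied in $L^2_{loc}((0,+\infty); H^{m+r-1}_k)$.

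Third, I would set $y := y^1 + y^2$. Adding the two equations and using the direct sum decompositions of $H^{m+r}$, $H^{m+r+1}$ and $H^{m+r-1}$, each compatible with the respective norms, shows that $y$ solves \eqref{clo-lop-sys-lin} in $L^2_{loc}((0,+\infty); H^{m+r-1})$ with the announced regularity. For uniqueness, given any solution $y$ in the stated class I would apply the projections $P_1, P_2$ onto the odd and even subspaces; since $P_k$ commutes with $\partial_t$ and $\Delta$, and since $K_j(y) = K_j(P_j y)$ because $K_j$ kills the $H_{3-j}$ component, each $P_k y$ solves the $k$-th decoupled problem, which has a unique solution by Corollary \ref{cor-euofsol}; hence $y = P_1 y + P_2 y = y^1 + y^2$.

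I do not expect a genuine obstacle here: the result is essentially a bookkeeping consequence of the one-component theory. The only points deserving a line of verification are that the feedback terms really decouple, i.e. $K_k(y) = K_k(P_k y)$ and $\phi_k K_k(y) \in H^{m+r-1}_k$, and that $D^m$ and the $P_k$ commute with $\partial_t$ and $\Delta$ — all of which follow directly from the hypotheses and the definitions of the spaces $H^s_k$.
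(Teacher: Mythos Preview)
Your proposal is correct and follows exactly the route the paper indicates: the paper simply states that the corollary follows ``by combining Lemma \ref{lem-euofsol}, Corollary \ref{cor-euofsol}, Remark \ref{rmk-key} and the fact that $H^m= H^m_1\oplus H^m_2$'', and you have faithfully unpacked precisely these ingredients (the odd/even decoupling via $K_k|_{H^{m+1/2+}_{3-k}}=0$, the reduction to $m=0$ via $D^m$, and the direct-sum reassembly for existence and uniqueness). There is nothing to add.
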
}

\subsection{Well-posedness of the viscous Burgers equation}
We now turn to the well-posedness of the closed-loop viscous Burgers system \eqref{clo-lop-sys-nonlin}. 
\begin{lem}[Well-posedness of the viscous Burgers equation \eqref{clo-lop-sys-nonlin}] 
\label{lem:wellposedburgers}
 Let $y_0\in L^{2}$. Let $\phi_1, \phi_2 \in H^{-1}$. Let $K_1, K_2: H^{3/4}\rightarrow \R$ be bounded. The equation
\begin{gather}\label{eq-phiky-l2-burg}
    \begin{cases}
    \partial_t y-\Delta y+ \partial_x (y^2/2)= \phi_1 K_1(y)+ \phi_2 K_2(y), \\
    y(0)= y_0,
    \end{cases}
\end{gather}
has a unique solution that is satisfied in $L^2_{loc}((0, +\infty); H^{-1})$ sense, \textcolor{black}{and}
\begin{equation}
      y(t)\in C^0([0, +\infty); L^2)\cap L^2_{loc}((0, +\infty); H^{1})\cap H^1_{loc}((0, +\infty); H^{-1}).
  \end{equation}
  \textcolor{black}{
  Moreover,
  \begin{equation}\label{eq:stabilitylarge}
      \|y(t,\cdot)\|_{L^{2}}\leq {\color{black}e^{Ct}\|y_{0}\|_{L^{2}},\;\forall\;t\in [0,+\infty).}
  \end{equation}
  }
\end{lem}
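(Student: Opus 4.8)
\emph{Overall strategy.} The plan is to establish local well-posedness by a contraction mapping argument, treating both the Burgers nonlinearity $\partial_x(y^2/2)$ and the feedback terms $\phi_1K_1(y)+\phi_2K_2(y)$ as perturbations of the inhomogeneous heat equation, and then to upgrade to global existence and prove \eqref{eq:stabilitylarge} by an $L^2$ energy estimate. Fix $T\in(0,1]$ and work in $X_T:=C^0([0,T];L^2)\cap L^2((0,T);H^1)$ with norm $\|y\|_{X_T}:=\|y\|_{L^\infty_tL^2}+\|y\|_{L^2_tH^1}$. Given $y\in X_T$, let $\Phi(y):=z$ be the solution of $\partial_t z-\Delta z=g_y:=\phi_1K_1(y)+\phi_2K_2(y)-\partial_x(y^2/2)$, $z(0)=y_0$, which exists in $X_T$ and satisfies $\|z\|_{X_T}\le C_0\bigl(\|y_0\|_{L^2}+\|g_y\|_{L^2_tH^{-1}}\bigr)$ with $C_0$ independent of $T\in(0,1]$ — this is the classical inhomogeneous parabolic estimate, already contained in the arguments of Section~\ref{sec:well-pos}.

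\emph{The source belongs to $L^2_tH^{-1}$ with a gain in $T$.} In one space dimension the Gagliardo--Nirenberg inequality gives $\|u\|_{L^4(\T)}\le C\|u\|_{L^2}^{3/4}\|u\|_{H^1}^{1/4}$, whence $\|u^2\|_{L^2((0,T)\times\T)}=\|u\|_{L^4_{t,x}}^2\le CT^{1/4}\|u\|_{X_T}^2$, so $\partial_x(y^2/2)\in L^2((0,T);H^{-1})$ with norm $\le CT^{1/4}\|y\|_{X_T}^2$. For the feedback, interpolation gives $\|y(t)\|_{H^{3/4}}\le\|y(t)\|_{L^2}^{1/4}\|y(t)\|_{H^1}^{3/4}$, hence $\|K_k(y)\|_{L^2(0,T)}\le\|K_k\|\,T^{1/8}\|y\|_{X_T}$ and, since $\phi_k\in H^{-1}$, $\|\phi_kK_k(y)\|_{L^2_tH^{-1}}\le CT^{1/8}\|y\|_{X_T}$. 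Therefore $\|g_y\|_{L^2_tH^{-1}}\le C\bigl(T^{1/4}\|y\|_{X_T}^2+T^{1/8}\|y\|_{X_T}\bigr)$, and on a ball of $X_T$ of radius $R$ around $2C_0\|y_0\|_{L^2}$, for $T$ small (depending only on $\|y_0\|_{L^2}$) $\Phi$ maps the ball into itself. The difference estimate is analogous: $y^2-\tilde y^2=(y-\tilde y)(y+\tilde y)$ gives $\|y^2-\tilde y^2\|_{L^2_{t,x}}\le CT^{1/4}(\|y\|_{X_T}+\|\tilde y\|_{X_T})\|y-\tilde y\|_{X_T}$ and $\|K_k(y)-K_k(\tilde y)\|_{L^2(0,T)}=\|K_k(y-\tilde y)\|_{L^2(0,T)}\le CT^{1/8}\|y-\tilde y\|_{X_T}$ by linearity of $K_k$, so $\Phi$ is a contraction on the ball for $T$ small. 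Banach's fixed point theorem yields a unique solution in $X_T$ of \eqref{eq-phiky-l2-burg}, and the existence time depends only on $\|y_0\|_{L^2}$; in particular uniqueness in $X_T$ holds on the interval of existence.

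\emph{Global bound.} Given a solution on $[0,T^\ast)$ with the stated regularity, $t\mapsto\|y(t)\|_{L^2}^2$ is absolutely continuous and $\tfrac{d}{dt}\|y\|_{L^2}^2=2\langle\partial_t y,y\rangle_{H^{-1},H^1}$ (Lions--Magenes). Pairing the equation with $y$ and using the conservative structure $\langle\partial_x(y^2/2),y\rangle_{H^{-1},H^1}=-\int_\T(y^2/2)y_x=-\int_\T\partial_x(y^3/6)=0$ gives
\[
\tfrac12\tfrac{d}{dt}\|y\|_{L^2}^2+\|y_x\|_{L^2}^2=K_1(y)\langle y,\phi_1\rangle+K_2(y)\langle y,\phi_2\rangle .
\]
Each term on the right is bounded by $|K_k(y)|\,\|\phi_k\|_{H^{-1}}\|y\|_{H^1}\le\|K_k\|\,\|\phi_k\|_{H^{-1}}\|y\|_{L^2}^{1/4}\|y\|_{H^1}^{7/4}$. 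Since $7/4<2$ and $\|y\|_{H^1}^2\le C(\|y\|_{L^2}^2+\|y_x\|_{L^2}^2)$, Young's inequality absorbs $\|y_x\|_{L^2}^2$ into the left-hand side at the cost of a term $C\|y\|_{L^2}^2$, yielding $\tfrac{d}{dt}\|y\|_{L^2}^2\le C\|y\|_{L^2}^2$; Grönwall's lemma then gives $\|y(t)\|_{L^2}\le e^{Ct}\|y_0\|_{L^2}$, which is \eqref{eq:stabilitylarge}. Since the local existence time depends only on $\|y_0\|_{L^2}$ and this quantity stays bounded on any finite interval, the standard continuation argument forces $T^\ast=+\infty$. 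Finally, reading $\partial_t y=\Delta y-\partial_x(y^2/2)+\phi_1K_1(y)+\phi_2K_2(y)$ off the equation, all right-hand terms lie in $L^2_{loc}((0,+\infty);H^{-1})$, so $y\in H^1_{loc}((0,+\infty);H^{-1})$ and the equation holds in $L^2_{loc}((0,+\infty);H^{-1})$.

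\emph{Main obstacle.} The only genuinely delicate point is the energy estimate: it closes \emph{precisely because} $K_k$ is bounded on $H^{3/4}$, which is strictly weaker than $H^1$, so the interpolation exponent $7/4$ on $\|y\|_{H^1}$ remains below $2$ and leaves room for Young's inequality; had $K_k$ only been bounded on $H^1$ this step would fail and one could not rule out finite-time blow-up. The rest — the one-dimensional Gagliardo--Nirenberg bookkeeping in the fixed point and the continuation argument — is routine.
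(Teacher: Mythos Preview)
Your proof is correct and follows essentially the same approach as the paper: a Banach fixed-point argument in $C^0_tL^2\cap L^2_tH^1$ treating the Burgers nonlinearity and the feedback as $L^2_tH^{-1}$ perturbations, followed by the $L^2$ energy estimate exploiting $\langle y,\partial_x(y^2/2)\rangle=0$ and the $H^{3/4}$-boundedness of $K_k$ (with the same interpolation and Young's inequality to absorb $\|y\|_{H^1}^{7/4}$). The only cosmetic difference is that the paper bounds $\|y^2\|_{L^2_{t,x}}$ via $\|y\|_{L^\infty_x}\le C\|y\|_{H^{1/2}}$ and interpolation, whereas you go through $\|y\|_{L^4_x}$ and Gagliardo--Nirenberg; both yield the same $T^{1/4}$ gain.
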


 \begin{proof}[Proof of Lemma \ref{lem-euofsol}]
By denoting $S(t)$ the free heat flow evolution:  $y:= S(t) y_0$ as the solution of
\begin{gather}
    \begin{cases}
    \partial_t y-\Delta y= 0\textrm{ in } \T, \\
    y(0)= y_0,
    \end{cases}
\end{gather}
we know from integration by parts that 
\begin{gather}
    \|S(t)y_0\|_{C^0([0, T]; L^2_1)}\leq \|y_0\|_{L^2_1}, \\
     \|S(t)y_0\|_{L^2([0, T]; H^1_1)}\leq \|y_0\|_{L^2_1}.
\end{gather}
We also know from integration by parts that  the solution $g$ of the inhomogeneous heat equation,
\begin{gather}
    \begin{cases}
    \partial_t g-\Delta g= f\textrm{ in } \T, \\
    g(0)= 0,
    \end{cases}
\end{gather}
satisfies, for any given $T>0$,
\begin{gather}
    \|g(t)\|_{C^0([0, T]; L^2_1)}\leq \|f\|_{L^2(0, T; H^{-1}_1)}, \\
     \|g(t)\|_{L^2([0, T]; H^1_1)}\leq \|f\|_{L^2(0, T; H^{-1}_1)}.
\end{gather}
Let us define
\begin{equation}
    \mathcal{B}_T:= \{y\in C^0([0, T]; L^2_1)\cap L^2([0, T]; H^1_1)\},
\end{equation}
with its norm given by
\begin{equation}
    \|y\|_{\mathcal{B}_T}:= \|y\|_{C^0([0, T]; L^2_1)}+ \|y\|_{L^2([0, T]; H^1_1)}.
\end{equation}
\textcolor{black}{
For $M>0$, we also define $\mathcal{B}_{T}(M)$ as 
\begin{equation}
        \mathcal{B}_T(M):= \{y\in \mathcal{B}_T\; |\; \|y\|_{\mathcal{B}_{T}}\leq M\}.
\end{equation}
}
Suppose that $\|y_0\|_{L^2_1}= R$. Then, we consider the map 
\begin{equation}
 \mathcal{L}:    z\in \mathcal{B}_T(3R)\mapsto y\in \mathcal{B}_T
\end{equation}
that is defined as 
\begin{gather}
    \begin{cases}
    \partial_t y-\Delta y= \phi K(z) \textrm{ in } \T, \\
    y(0)= y_0.
    \end{cases}
\end{gather}
We immediately know that $\textcolor{black}{y}\in \mathcal{B}_T$. However, in order to show that $\mathcal{L}$ is a contraction \textcolor{black}{on $\mathcal{B}_{T}(3R)$} we need more delicate estimates. Indeed, we benefit from the fact that $K$ is a functional on $H^{3/4}_1$ instead of on $H^1_1$. \\

We know that the solution $y= \mathcal{L} z$ satisfies
\begin{align}
    \|y\|_{\mathcal{B}_T}&\leq 2R+ 2\|\phi (Kz)\|_{L^2(0, T; H^{-1}_1)}, \notag \\
    &\leq 2R+ C \|K z\|_{L^2(0, T)}, \notag\\
    &\leq 2R+ C \|z\|_{L^2(0, T; H^{\frac{3}{4}}_1)},  \notag\\
    &\leq 2R+ C T^{\frac{1}{8}} \|z\|_{L^{\frac{8}{3}}(0, T; H^{\frac{3}{4}}_1)},  \notag\\
    &\leq    2R+ C T^{\frac{1}{8}} \|z\|_{\mathcal{B}_T},  \notag\\
    &\leq  2R+ 3CR T^{\frac{1}{8}},
\end{align}
where we have used the following \textcolor{black}{technical} lemma. 
\begin{lem}\label{lem:3/4}
\begin{equation}
    \|z\|_{L^{\frac{8}{3}}(0, T; H^{\frac{3}{4}}_1)}\leq \|z\|_{L^{\infty}(0, T; L^2_1)}^{\frac{1}{4}} \|z\|_{L^{2}(0, T; H^{1}_1)}^{\frac{3}{4}}\leq \|z\|_{\mathcal{B}_T}.
\end{equation}
\end{lem}
\begin{proof}
As we know from Sobolev interpolation that 
\begin{equation}
    \|f\|_{H^{3/4}_1}\leq  \|f\|_{L^2_1}^{\frac{1}{4}} \|f\|_{H^1_1}^{\frac{3}{4}},
\end{equation}
then further thanks Hölder inequality,
\begin{align*}
    \|z\|_{L^{\frac{8}{3}}(0, T; H^{\frac{3}{4}}_1)}&\leq  \| \|z\|_{L^2_1}^{\frac{1}{4}} \|z\|_{H^1_1}^{\frac{3}{4}} \|_{L^{\frac{8}{3}}(0, T)}, \\
    &\leq \| \|z\|_{L^2_1}^{\frac{1}{4}} \|_{L^{\infty}(0, T)}   \|  \|z\|_{H^1_1}^{\frac{3}{4}} \|_{L^{\frac{8}{3}}(0, T)}, \\
    &\leq \|z\|_{L^{\infty}(0, T; L^2_1)}^{\frac{1}{4}} \|z\|_{L^{2}(0, T; H^{1}_1)}^{\frac{3}{4}}, \\
    &\leq \|z\|_{\mathcal{B}_T}.
\end{align*}
\end{proof}
Therefore, for $T>0$ sufficiently small we know that 
\begin{equation}
    \mathcal{L}: \mathcal{B}_T(3R)\rightarrow \mathcal{B}_T(3R).
\end{equation}

Next, we show that, by choosing $T$ even small if necessary,  the map $\mathcal{L}$ is actually a contraction map.  For any $z_1, z_2\in \mathcal{B}_T(3R)$, suppose that $y_1:= \mathcal{L} z_1, y_2:= \mathcal{L} z_2$, thus
\begin{gather}
    \begin{cases}
    \partial_t y_1-\Delta y_1= \phi K(z_1) \textrm{ in } \T, \\
    y_1(0)= y_0,
    \end{cases}
\end{gather}
and 
\begin{gather}
    \begin{cases}
    \partial_t y_2-\Delta y_2= \phi K(z_2) \textrm{ in } \T, \\
    y_2(0)= y_0.
    \end{cases}
\end{gather}
This implies that $w:= y_1- y_2= \mathcal{L}(z_1-z_2)$ satisfies
\begin{gather}
    \begin{cases}
    \partial_t w-\Delta w= \phi K(z_1-z_2) \textrm{ in } \T, \\
    w(0)= 0,
    \end{cases}
\end{gather}
which further yields
\begin{equation}
    \|\mathcal{L}(z_1-z_2)\|_{\mathcal{B}_T}=  \|w\|_{\mathcal{B}_T}\leq 2\|\phi K(z_1- z_2)\|_{L^2(0, T; H^{-1}_1)} \leq CT^{\frac{1}{8}} \|z_1-z_2\|_{\mathcal{B}_T}.
\end{equation}

Therefore, $\mathcal{L}$ is actually a contraction on $\mathcal{B}_T(3R)$ for $T$ sufficiently small. 
Banach fixed point theorem gives the existence and uniqueness of the solution in a small time interval.\\

Finally, it is standard to extend the solution to a large time domain. It actually suffices to show the existence on $[0, 1]$, thus it does not blow up in this domain. Concerning the solution $y(t)$ of the system \eqref{eq-phiky-l2}, integration by parts, which together with Sobolev interpolation and Young's inequality, yield, 
\begin{align*}
    \frac{d}{dt}\|y(t)\|_{L^2_1}^2&\leq -2\|y(t)\|_{H^1_1}^2+ C \|y(t)\|_{H^1_1} |Ky(t)|,\\
    &\leq -2\|y(t)\|_{H^1_1}^2+ C \|y(t)\|_{H^1_1}\|y(t)\|_{H^{3/4}_1}, \\
    &\leq -2\|y(t)\|_{H^1_1}^2+ C \|y(t)\|_{H^1_1}^{\frac{7}{4}}\|y(t)\|_{L^2_1}^{\frac{1}{4}},\\
     &\leq -2\|y(t)\|_{H^1_1}^2+ C \left(\frac{7\varepsilon}{8}\|y(t)\|_{H^1_1}^{2}+ \frac{1}{8\varepsilon^7}\|y(t)\|_{L^2_1}^{2}\right), \\
       &\leq -\|y(t)\|_{H^1_1}^2+ C \|y(t)\|_{L^2_1}^{2}.
\end{align*}
The preceding \textit{a priori} estimate, to be combined with standard arguments, indicate the existence of solution on $[0, 1]$ and further on $[0, +\infty)$.
\end{proof}

{\color{black}
\begin{proof}[Proof of Corollary \ref{cor-euofsol}]
Concerning Corollary \ref{cor-euofsol}, again, we only prove the case that $k=1$ as the other case that $k=2$ is similar. Observe that $r-1<-1/2$, meaning $\phi_1\in H^{r-1}_1$.  \textcolor{black}{Meanwhile, the choice of $r$ also tells us that $K$ is bounded on $H^{r+1}_{1}$.}

At first we investigate the related  open-loop system, $i. e. $  we replace $K_1(y)$  by $u(t)\in L^2_{loc}(0, +\infty)$.
The equation
\begin{gather}
    \begin{cases}
    \partial_t y-\Delta y= \phi_1 u(t), \\
    y(0)= y_0\in H^r_1,
    \end{cases}
\end{gather}
has a unique solution in the space \eqref{es-sol-op-r}, satisfying
\begin{gather}
  \|y(t)\|_{C^0([0, T]; H^r_1)}^2\leq \|y_0\|_{H^r_1}^2+     \|\phi_1 u(t)\|_{L^2(0, T; H^{r-1}_1)}^2, \label{6238} \\
   \|y(t)\|_{L^2(0, T; H^{r+1}_1)}^2\leq \|y_0\|_{H^r_1}^2+  \|\phi_1 u(t)\|_{L^2(0, T; H^{r-1}_1)}^2. \label{6239}
\end{gather}
Indeed,
\begin{align*}
     \frac{1}{2} \frac{d}{dt} \|y(t)\|_{H^r_1}^2&= \langle y(t), \dot{y}(t)\rangle_{H^r_1} \\
     &= -\|y(t)\|_{H^{r+1}_1}^2 + \langle y(t), u(t)\phi_1 \rangle_{H^{r}_1}\\
        &\leq  -\|y(t)\|_{H^{r+1}_1}^2+ \|y(t)\|_{H^{r+1}_1} \|u(t)\phi_1\|_{H^{r-1}_1} \\
           &\leq  -\|y(t)\|_{H^{r+1}_1}^2+ \frac{1}{2}\|y(t)\|_{H^{r+1}_1}^1+ \frac{1}{2} \|u(t)\phi_1\|_{H^{r-1}_1}^2 \\
     &\leq  - \frac{1}{2}\|y(t)\|_{H^{r+1}_1}^2+ \frac{1}{2}|u(t)|^2 \|\phi_1\|_{H^{r-1}_1}^2,\\
      &\leq  - \frac{1}{2}\|y(t)\|_{H^{r+1}_1}^2+ C|u(t)|^2,
\end{align*}
where we have used the fact that for $h, g\in \mathcal{S}_1$ (thus extends to related Sobolev spaces)
\begin{gather*}
h:= \sum_{n\in \N^*} h_n \sin{nx},\;\;  g:= \sum_{n\in \N^*} g_n \sin{nx}, \\
    \langle h, g\rangle_{H^r_1}= \sum_{n\in \N^*} (n^r h_n) (n^r g_n)= \sum_{n\in \N^*} (n^{r+1} h_n) (n^{r-1} g_n)\leq ||h||_{H^{r+1}_1} ||g||_{H^{r-1}_1},
\end{gather*}
as well as that 
\begin{equation*}
    \langle h, \Delta h\rangle_{H^r_1}= -\sum_{n\in \N^*} (n^r h_n) (n^{r+2} h_n)= -\sum_{n\in \N^*} (n^{r+1} h_n) (n^{r+1} h_n)=- ||h||_{H^{r+1}_1}^2.
\end{equation*}

 Next, for the closed-loop system \eqref{eq-phiky-l2}, by the choice of  $r$ there exists some $s_0, s_1$ satisfying  $r< 1/2< s_0< r+1$  such that $H^{r+1}_1 \subset H^{s_0}_1\subset H^{r}_1$, and that $K$ is bounded on  $H^{s_0}_1$.  Consequently, the same proof of Lemma \ref{lem-euofsol} adapts here.  \textcolor{black}{For instance, } in Lemma \ref{lem-euofsol} the value of $s_0$ is chosen as $3/4$ \textcolor{black}{(see Lemma \ref{lem:3/4} in Section \ref{sec:well-pos})}. Indeed,
 \begin{lem} For $p:= \frac{2}{s_0-r}> 2$, we know that 
\begin{equation}
    \|z\|_{L^{p}(0, T; H^{s_0}_1)}\leq \|z\|_{L^{\infty}(0, T; H^r_1)}^{r+1-s_0} \|z\|_{L^{2}(0, T; H^{r+1}_1)}^{s_0-r}.
\end{equation}
\end{lem}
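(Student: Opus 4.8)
The plan is to reduce the estimate to a pointwise-in-time Sobolev interpolation inequality followed by Hölder's inequality in the time variable, in exactly the same spirit as the proof of Lemma \ref{lem:3/4}. First I would fix the interpolation exponent: since $r<s_0<r+1$, one can write $s_0=\theta r+(1-\theta)(r+1)$ with $\theta:=r+1-s_0\in(0,1)$, so that $1-\theta=s_0-r$. Working in the subspace $H^{s}_1$ of odd functions, where the Fourier-side characterization $\|f\|_{H^{s}_1}^2=\sum_{n\in\N^*}n^{2s}f_n^2$ holds, the map $s\mapsto\|f\|_{H^{s}_1}$ is log-convex (by Hölder's inequality applied to the series), hence for every $t$ one has
\[
\|z(t,\cdot)\|_{H^{s_0}_1}\leq\|z(t,\cdot)\|_{H^{r}_1}^{\theta}\,\|z(t,\cdot)\|_{H^{r+1}_1}^{1-\theta}.
\]

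Next I would raise this inequality to the power $p=\frac{2}{s_0-r}$. The decisive algebraic point is that $(1-\theta)p=(s_0-r)\cdot\frac{2}{s_0-r}=2$, so that
\[
\|z(t,\cdot)\|_{H^{s_0}_1}^{p}\leq\|z(t,\cdot)\|_{H^{r}_1}^{\theta p}\,\|z(t,\cdot)\|_{H^{r+1}_1}^{2}.
\]
Integrating over $t\in(0,T)$, bounding the first factor by its essential supremum and pulling it out of the integral gives
\[
\int_0^T\|z(t,\cdot)\|_{H^{s_0}_1}^{p}\,\mathrm dt\leq\|z\|_{L^{\infty}(0,T;H^{r}_1)}^{\theta p}\int_0^T\|z(t,\cdot)\|_{H^{r+1}_1}^{2}\,\mathrm dt=\|z\|_{L^{\infty}(0,T;H^{r}_1)}^{\theta p}\,\|z\|_{L^{2}(0,T;H^{r+1}_1)}^{2}.
\]
Taking the $p$-th root and using $\theta=r+1-s_0$ together with $\frac{2}{p}=s_0-r$ yields precisely the claimed bound
\[
\|z\|_{L^{p}(0,T;H^{s_0}_1)}\leq\|z\|_{L^{\infty}(0,T;H^{r}_1)}^{r+1-s_0}\,\|z\|_{L^{2}(0,T;H^{r+1}_1)}^{s_0-r}.
\]

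Every step here is elementary, so I do not anticipate any genuine obstacle; the only points to be careful about are that the pointwise interpolation is applied within the odd subspace $H^{s}_1$ (so that the Fourier characterization and hence the log-convexity are transparent, as in the computations of Section \ref{sec:functionalsetting}), and that all constants in the chain are equal to $1$, so no loss is incurred. The same argument applies verbatim in $H^{s}_2$, and it is exactly this inequality (with $s_0$ chosen in $(1/2,r+1)$ so that $K$ is bounded on $H^{s_0}_1$) that allows the contraction and \emph{a priori} estimates of the proof of Lemma \ref{lem-euofsol} to be reproduced in the setting of Corollary \ref{cor-euofsol}.
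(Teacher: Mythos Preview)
Your proof is correct and follows essentially the same approach as the paper: pointwise Sobolev interpolation between $H^r_1$ and $H^{r+1}_1$, followed by H\"older's inequality in time (the paper phrases it as the $L^\infty$--$L^p$ H\"older split rather than raising to the $p$-th power first, but this is the same computation). The key algebraic observation $(s_0-r)p=2$ is exactly what the paper uses as well.
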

\begin{proof}
Since 
\begin{equation}
    \|z\|_{H^{s_0}_1}\leq  \|z\|_{H^{r}_1}^{r+1-s_0} \|z\|_{H^{r+1}_1}^{s_0-r},
\end{equation}
we know that 
\begin{align*}
    \|z\|_{L^{p}(0, T; H^{s_0}_1)}&\leq \| \|z\|_{ H^r_1}^{r+1-s_0} \|z\|_{ H^{r+1}_1}^{s_0-r}\|_{L^p(0, T)},\\
    &\leq \| \|z\|_{ H^r_1}^{r+1-s_0} \|_{L^{\infty}(0, T)}   \|  \|z\|_{ H^{r+1}_1}^{s_0-r}\|_{L^p(0, T)}, \\
    &= \|z\|_{ L^{\infty}(0, T; H^r_1)}^{r+1-s_0} \|z\|_{ L^{p(s_0- r)}(0, T; H^{r+1}_1)}^{s_0-r}.
\end{align*}
\end{proof}
For any given $T>0$. Let us define
\begin{equation}
    \mathcal{B}_T^r:= \{y\in C^0([0, T]; H^{r}_1)\cap L^2([0, T]; H^{r+1}_1)\},
\end{equation}
with its norm given by
\begin{equation}
    \|y\|_{\mathcal{B}^r_T}:= \|y\|_{C^0([0, T]; H^r_1)}+ \|y\|_{L^2([0, T]; H^{r+1}_1)},
\end{equation}
and 
\begin{equation}
        \mathcal{B}^r_T(M):= \{y\in \mathcal{B}^r_T\; |\; \|y\|_{\mathcal{B}^r_{T}}\leq M\}.
\end{equation}
Let $ \|y_0\|_{H^r_1}= R$. For any $T>0$, we further consider the map 
$\mathcal{L}^r: z\in \mathcal{B}^r_T(3R)\mapsto y\in \mathcal{B}^r_T$ defined as
\begin{gather}
    \begin{cases}
    \partial_t y-\Delta y= \phi K(z) \textrm{ in } \T, \\
    y(0)= y_0.
    \end{cases}
\end{gather}

  When  $T$ is sufficiently small, by adapting 
\begin{equation*}
    \|\phi (Kz)\|_{L^2(0, T; H^{r-1}_1)}\leq C \|Kz\|_{L^2(0, T)} \leq C \|z\|_{L^2(0, T; H^{s_0}_1)}\leq C T^{\frac{1}{2}-\frac{1}{p}} \|z\|_{L^{p}(0, T; H^{s_0}_1)},
\end{equation*}
and the fixed point argument, there is  a unique solution in the space \eqref{es-sol-op-r} in a small interval of time $[0, T]$, more precisely as the unique fixed point of $\mathcal{L}^r$ in $\mathcal{B}^r_T(3R)$.

Next, the \textit{a priori} estimate further implies the existence of a unique solution in large interval of time.
 \end{proof}
 \begin{remark}
 For the case that $k=2$ instead of 1, similar approach leads to the same well-posedness result. The only place that needs to be (slightly) modified is that the inequalites \eqref{6238}--\eqref{6239} should be replaced by, 
 \begin{gather*}
  \|y(t)\|_{C^0([0, T]; H^r_2)}^2\leq e^{2T}\left(\|y_0\|_{H^r_2}^2+     \|\phi_1 u(t)\|_{L^2(0, T; H^{r-1}_2)}^2\right),  \\
   \|y(t)\|_{L^2(0, T; H^{r+1}_2)}^2\leq  e^{2T}\left(\|y_0\|_{H^r_2}^2+  \|\phi_1 u(t)\|_{L^2(0, T; H^{r-1}_2)}^2\right).
\end{gather*}
That is because 
\begin{equation*}
  \langle y(t), \Delta y(t)  \rangle_{H^{r}_2, H^{r}_2}=-\|y(t)\|_{H^{r+1}_2}^2+ (\langle y, f^2_0\rangle)^2\leq -\|y(t)\|_{H^{r+1}_2}^2+ \|y(t)\|_{H^r_2}^2,
\end{equation*}
which leads to
\begin{align*}
     \frac{1}{2} \frac{d}{dt} \|y(t)\|_{H^r_2}^2
     &\leq -\|y(t)\|_{H^{r+1}_2}^2+ \|y(t)\|_{H^r_2}^2 + \langle y(t), u(t)\phi_1 \rangle_{H^{r}_2}\\
           &\leq  \|y(t)\|_{H^r_2}^2 -\frac{1}{2}\|y(t)\|_{H^{r+1}_2}^2+ \frac{1}{2} \|u(t)\phi_1\|_{H^{r-1}_2}^2.
\end{align*}
Thus
\begin{equation*}
    \|y(t)\|_{H^r_2}^2+ \|y\|_{L^2(0, t; H^{r+1}_2)}^2\leq e^{2t}\|y(0)\|_{H^r_2}^2+ e^{2t} \|u(t)\phi\|_{L^2(0, t; H^{r-1}_2)}^2. 
\end{equation*}
 \end{remark}

\begin{proof}[Proof of Lemma \ref{lem:wellposedburgers}]
Finally, we simply comment on the proof of Lemma \ref{cor-euofsol} whose proof is essentially the same as the proof of Lemma \ref{lem-euofsol}.

Indeed, concerning the existence of the solution in a small interval of time, it suffices to treat the nonlinear term as a perturbation  using Gagliardo–Nirenberg interpolation inequality,   which is standard.
\begin{align*}
    \|\partial_x(y^2)\|_{L^2(0, T; H^{-1})}^2&\leq  \|y y\|_{L^2(0, T; L^2)}^2, \\
    &\leq \int_0^T \|y(t, \cdot)\|_{L^2}^2 \|y(t, \cdot)\|_{L^{\infty}}^2 dt, \\
     &\leq C\int_0^T \|y(t, \cdot)\|_{L^2}^2 \|y(t, \cdot)\|_{H^{1/2}}^2 dt, \\
     &\leq C\int_0^T \|y(t, \cdot)\|_{L^2}^3 \|y(t, \cdot)\|_{H^1} dt, \\
     &\leq  C T^{\frac{1}{2}} \|y\|_{C^0([0, T]; L^2)}^3 \|y\|_{L^2(0, T; H^1)}.
\end{align*}
Next, classical energy estimates, benefiting from the fact that 
\begin{equation}
    \langle y, \partial_x(y^2) \rangle=0,
\end{equation}
lead to the existence of solution in large interval of time,
\begin{align*}
   \frac{1}{2} \frac{d}{dt}\|y(t)\|_{L^2}^2&= \langle y(t), \Delta y(t)+ \phi_1 K_1(y(t))+ \phi_2 K_2(y(t))  \rangle_{H^1, H^{-1}}\\
    &\leq -\|y(t)\|_{H^1}^2+ \|y(t)\|_{L^2}^2+ C \|y(t)\|_{H^1} \left(|K_1y(t)|+ |K_2y(t)|\right),\\
    &\leq -\frac{1}{2}\|y(t)\|_{H^1}^2+ C \|y(t)\|_{L^2}^{2},
\end{align*}
where, slightly different from the calculation on $L^2_1$ and $H^1_1$, for $H^1$ norm (just as for $H^1_2$ norm) we have
\begin{equation*}
  \langle y(t), \Delta y(t)  \rangle_{H^1, H^{-1}}=-\|y(t)\|_{H^1}^2+ (\langle y, f^2_0\rangle)^2\leq -\|y(t)\|_{H^1}^2+ \|y(t)\|_{L^2}^2. 
\end{equation*}
 \end{proof}

 }
 
 \section{Exponential stabilization of the heat and Burgers equations}
 \label{sec:proof-thms}
 \subsection{Heat equation: well-posedness and stability of the transformed system}
 \textcolor{black}{In order to show Theorem \ref{thm-main-linear}, we need to show the well-posedness of the closed-loop system and its exponential stability.  Also, to simplify the notation we assume that $m=0$ even though the exact same can be done with $m\neq 0$. Let $y_0\in L^2_1$. \textcolor{black}{Under the assumption of Theorem \ref{thm-main-linear} we can define $$B= (\phi_1, \phi_2), \; K=(K_{1},K_2)^T, \; T=T_{1}+T_2$$ given by Proposition \ref{thm-ope-t} and Corollary \ref{cor-key}.} }
 
 Let $\tau>0$.
  Considering the fact that for $k\in \{1, 2\}$ $\phi_{k}\in H^{-1}_k$ and that $K_{1}: H^1_k\rightarrow \R$,  for any $\varphi(t)\in L^2((0, \tau); H^1)$ we have that $B K\varphi(t)\in L^2((0, \tau); H^{-1})$. Then, concerning the closed-loop system
  \begin{equation}\label{sys-clo-yt}
      y_t-\Delta y= B K(y), \;\; y(0)=y_0,
  \end{equation}
  from Corollary \ref{cor-euofsol.key},  we get a unique solution of the closed-loop system
  \begin{equation}
      y(t)\in C^0([0, \tau]; L^2(\T^1))\cap L^2((0, \tau); H^1(\T^1))\cap H^1((0, \tau); H^{-1}(\T^1)),
  \end{equation}
  which indicates that the equation \eqref{sys-clo-yt} (thus each item of it) is satisfied in $L^2((0, \tau); H^{-1}(\T^1))$.

  Since the operator $T$ is bounded in $H^l(\T^1)$ space with $l= -1, 0, 1$ \textcolor{black}{(from Proposition \ref{thm-ope-t} and Corollary \ref{cor-key})}, we know that 
   \begin{equation}\label{reg-zt-H1}
      z(t):= Ty(t)\in C^0([0, \tau]; L^2(\T))\cap L^2((0, \tau); H^1(\T))\cap H^1((0, \tau); H^{-1}(\T)).
  \end{equation}
  
  Moreover, by \textcolor{black}{applying} $T$ to \eqref{sys-clo-yt} we know that 
  \begin{equation}\label{sys-clo-yt-T}
     \textcolor{black}{T y_t- TA y= TB K(y)} \;\text{ in }\;L^2((0, \tau); H^{-1}(\T^1)).
  \end{equation}
  By applying \textcolor{black}{Proposition \ref{thm-ope-t} and Corollary \ref{cor-key} with} $s=0$, together with the fact that $y(t)\in L^2((0, \tau); H^1_1(\T^1))$, we arrive at
   \begin{equation}\label{sys-clo-yt-T-trans}
     T y_t= ATy- \lambda T y,  \textrm{ in } L^2((0, \tau); H^{-1}(\T)).
  \end{equation}
Hence,
  \begin{equation}
      z_t-\Delta z+\lambda z=0 \textrm{ in } L^2((0, \tau); H^{-1}(\T)).
  \end{equation}
  Consequently, 
  \begin{equation}
     \frac{1}{2} \frac{d}{dt} \|z(t)\|_{L^2(\T)}^2= \langle z(t), \dot{z}(t) \rangle_{H^1, H^{-1}}\leq -\lambda
     \|z(t)\|_{L^2(\T)}^2
  \end{equation}
  holds in $L^1(0, T)$, which further implies the required decay property of the solution 
  \begin{equation}
      \|z(t)\|_{L^2}\leq e^{-\lambda t} \|z(0)\|_{L^2}, \; \forall t\in [0, \tau].
  \end{equation}
  
Finally,  
\textcolor{black}{as $T$ is an isomorphism on $L^{2}$ from Proposition \ref{thm-ope-t} and Corollary \ref{cor-key},} we conclude that
   \begin{equation}
      \|y(t)\|_{L^2}\leq C(\lambda) e^{-\lambda t} \|y^{0}\|_{L^2}, \; \forall t\in [0, \tau],
  \end{equation}
Given that this is true for any $\tau>0$ and that $C(\lambda)$ does not depend on $\tau$, it implies that
$y\in C^0([0, +\infty); L^2(\T))$ and 
\begin{equation}
\|y(t)\|_{L^2}\leq C(\lambda) e^{-\lambda t} \|y^{0}\|_{L^2}, \; \forall t\in [0, +\infty).
\end{equation}
This nearly ends the proof of Theorem \ref{thm-main-linear}, the only thing left is to check that we have also a stabilization in $H^{r}$ for any $r\in(-1/2,1/2)$.
 
\begin{remark}
Let $r\in (-1/2, 1/2)$. The same feedback law $K(y)$ also stabilizes the system \eqref{sys-clo-yt} in $H^{r}-$space.
\end{remark} 
Indeed, let $r\in (-1/2, 1/2)$ and let $y_0\in H^r$. As $\phi_k\in H^{r-1}_k$ and $K_k: H^{r+1}_k\rightarrow \R$, for any $\varphi(t)\in L^2((0, \tau); H^{r+1})$ we have that $B K\varphi(t)\in L^2((0, \tau); H^{r-1})$. Then equation \eqref{sys-clo-yt} has a solution 
\begin{equation}\label{reg-yt-Hs+1}
      y(t)\in C^0([0, \tau]; H^r(\T))\cap L^2((0, \tau); H^{r+1}(\T))\cap H^1((0, \tau); H^{r-1}(\T)),
  \end{equation}
 and holds in $L^2((0, \tau); H^{r-1}(\T))$.  
 
 Because $T$ is bounded in $H^{l}$ with $l= r-1, r, r+1$ (Lemma \ref{lem-rbheat-1} part (4) and Section \ref{subsec-Kn-bound}), we know that $z(t):= T y(t)$ lives in the  same space of $y(t)$ in \eqref{reg-yt-Hs+1}.
 
 By adapting $T$ to the equation \eqref{sys-clo-yt} we know that \eqref{sys-clo-yt-T} holds in $L^2((0, \tau); H^{r-1}(\T))$.  
 Then by adapting Lemma \ref{lem-op-eq-tbk} to the case that $s=r$, we get 
 \begin{equation}
     T y_t= ATy- \lambda Ty,  \textrm{ in } L^2((0, \tau); H^{r-1}(\T)).
  \end{equation}
Hence,
  \begin{equation}\label{decay-z}
      z_t-\Delta z+\lambda z=0 \textrm{ in } L^2((0, \tau); H^{r-1}(\T)),
  \end{equation}
which leads to the required exponential decay \textcolor{black}{of $z$} in $H^r$,
\begin{equation}
     \frac{1}{2} \frac{d}{dt} \|z(t)\|_{H^r}^2= \langle z(t), \dot{z}(t) \rangle_{H^{r}}\leq  -\lambda \|z(t)\|_{H^r}^2. 
\end{equation}

\textcolor{black}{Consequently, using again that $T$ is an isomorphism in $H^{r}$,}
\begin{equation}
       \|y(t)\|_{H^{r}}\leq \textcolor{black}{C_r(\lambda) e^{-\lambda t}}\|y(0)\|_{H^{r}},
\end{equation}
with \textcolor{black}{$C$ and $C_r(\lambda)$} depending on $r\in (-1/2, 1/2)$ and $\lambda\notin \mathcal{N}$. \textcolor{black}{This ends the proof of Theorem \ref{thm-main-linear}}.

{\color{black}
\subsection{Viscous Burgers equation: well-posedness of the target system and stability of the closed-loop system}
\textcolor{black}{The proof of Theorem \ref{thm-main-burgers} dealing with the viscous Burgers equation is very similar to the proof of Theorem \ref{thm-main-linear} dealing with the} heat equation with $m=0, r=0$.
 Let \textcolor{black}{$\tau>0$ and} $y_0\in L^2_1$ \textcolor{black}{such that $\|y_0\|_{L^{2}}<\delta$, where $\delta$ is a constant to be chosen.}  \textcolor{black}{Lemma \ref{lem:wellposedburgers}} implies that the closed-loop system \eqref{clo-lop-sys-nonlin} has a unique solution $y(t)$, \textcolor{black}{provided that $\delta$ is sufficiently small (depending on $\tau$),} and
  \begin{equation}\label{reg-yt-H1}
      y(t)\in C^0([0, \tau]; L^2(\T))\cap L^2((0, \tau); H^1(\T))\cap H^1((0, \tau); H^{-1}(\T)),
  \end{equation}
  which holds in $L^2(0, \tau; H^{-1}(\T))$. Next, again, thanks to the fact that 
   the operator $T$ is bounded in $H^l_1(\T)$ space with $l= -1, 0, 1$,
   \begin{equation}
           z(t):= Ty(t)\in C^0([0, \tau]; L^2(\T))\cap L^2((0, \tau); H^1(\T))\cap H^1((0, \tau); H^{-1}(\T)).
  \end{equation}
  
  Next, by \textcolor{black}{applying} $T$ to \eqref{clo-lop-sys-nonlin} we know that in $L^2((0, \tau); H^{-1}(\T))$,
  \begin{equation}
     T y_t- TA y+ T\partial_x(y^2/2)= T B K(y), 
  \end{equation}
  where we used the fact that $T\partial_x(y^2/2)\in L^2((0, \tau); H^{-1})$.
  
  By applying Lemma \ref{lem-op-eq-tbk} for the case $s=0$, combined with the fact that $y(t)\in L^2((0, \tau); H^1(\T^1))$, we get that
   \begin{equation}
   T y_t= ATy- \lambda Ty- T\partial_x(y^2)/2,  \textrm{ in } L^2((0, \tau); H^{-1}(\T)).
  \end{equation}
Hence,
  \begin{equation}
      z_t-\Delta z+\lambda z+ T\partial_x(T^{-1}z)^2/2 =0 \textrm{ in } L^2((0, \tau); H^{-1}_1(\T)).
  \end{equation}
  
  Therefore, the system is locally stable in $L^2(\T)$ space, provided that $\|z\|_{L^{2}}$ is small enough, indeed
  \begin{align*}
       \frac{1}{2} \frac{d}{dt} \|z(t)\|_{L^2(\T)}^2&= \langle z(t), \dot{z}(t) \rangle_{H^1, H^{-1}} \\
       &= \langle z(t), \Delta z-\lambda z- T\partial_x(T^{-1}z)^2/2 \rangle_{H^1, H^{-1}}, \\
       &\leq -\|z\|_{H^1}^2+ \|z\|_{L^2}^2-\lambda \|z\|_{L^2}^2+ C \|z\|_{H^1} \|T \partial_x(T^{-1}z)^2\|_{H^{-1}}\\
        &\leq -\|z\|_{H^1}^2-(\lambda-1) \|z\|_{L^2}^2+ C \|z\|_{H^1} \|(T^{-1}z)^2\|_{L^2} \\
          &\leq -\|z\|_{H^1}^2-(\lambda-1) \|z\|_{L^2}^2+ C \|z\|_{H^1} \|T^{-1}z\|_{L^2}\|T^{-1}z\|_{H^1} \\
         &\leq -\|z\|_{H^1}^2-(\lambda-1) \|z\|_{L^2}^2+ C \|z\|_{H^1}^2 \|z\|_{L^2}.
  \end{align*}
\textcolor{black}{Hence, provided that $\sup_{[0,\tau]}(\|z\|_{L^{2}})$ is small enough (depending on $\lambda\in (1, +\infty)$) one has
  \begin{equation}
        \|z(t)\|_{L^2}\leq  e^{-(\lambda-1) t}  \|z(0)\|_{L^2}, \forall t\in[0, \tau],
  \end{equation}}
    which also implies, using $T^{-1}$, that 
  \begin{equation}\label{eq:stab-burgers}
      \|y(t)\|_{L^2}\leq C (\lambda) e^{-(\lambda-1) t}  \|y(0)\|_{L^2}, \forall t\in[0, \tau],
  \end{equation}
\textcolor{black}{provided that $\sup_{[0,\tau]}(\|z\|_{L^{2}})$ small, or equivalently that $\sup_{[0,\tau]}(\|y\|_{L^{2}})$ small, from the isomorphism property of $T$. Finally, from Lemma \ref{lem:wellposedburgers} and \eqref{eq:stabilitylarge} it suffices to have $\|y_{0}\|_{L^{2}}$ small. This means that there exists $\delta_{1}(\tau,\lambda)$ such that for any $\delta\in(0,\delta_{1}(\tau,\lambda))$ the solution $y$ satisfies \eqref{reg-yt-H1} and the exponential stability estimate \eqref{eq:stab-burgers} holds.}\\

\textcolor{black}{So far the constant $\delta$ depends on $\tau$ but, leveraging the exponential stability estimate \eqref{eq:stab-burgers}, it can be made independent of $\tau$ using a very classical argument:
let $\tau_{1}>0$ such that $e^{-(\lambda-1) \tau_{1}/2}< (C(\lambda))^{{\color{black}-1}}$ , and select $\delta = \delta_{1}(\tau_{1},\lambda)$, then $y$ exists and \eqref{eq:stab-burgers} holds on $[0,\tau_{1}]$, therefore
\begin{equation}\label{eq:yt1}
   \|y(\tau_{1},\cdot)\|_{L^{2}}\leq e^{-(\lambda-1)\tau_{1}/2}\|y_{0}\|_{L^{2}}.
\end{equation}
As the system \eqref{clo-lop-sys-nonlin} is autonomous, studying it on $[\tau_{1},2\tau_{1}]$ is the same as studying it on $[0,\tau_{1}]$ with initial condition $y(\tau_{1},\cdot)$. And from \eqref{eq:yt1}, $\|y(\tau_{1},\cdot)\|_{L^{2}}{\color{black}\leq \|y_0\|_{L^2}}\leq \delta_{1}(\tau_{1},\lambda)$, hence the solution exists on $[\tau_{1},2\tau_{1}]$ and 
\begin{equation}
          \|y(t)\|_{L^2}\leq C (\lambda) e^{-(\lambda-1) (t-\tau_{1})}  \|y(\tau_{1})\|_{L^2}, \forall t\in[\tau_{1}, 2\tau_{1}],
\end{equation}
which together with \eqref{eq:stab-burgers} gives
\begin{equation}
          \|y(t)\|_{L^2}\leq C (\lambda) e^{-\frac{(\lambda-1)}{2} t}            \|y(0)\|_{L^2}, \forall t\in[0, 2\tau_{1}],
\end{equation}
Hence, iterating this procedure, for any $n\in\mathbb{N}^{*}$ $y$ exists on $[0,n\tau_{1}]$
\begin{equation}
          \|y(t)\|_{L^2}\leq C (\lambda) e^{-\frac{(\lambda-1)}{2} t}            \|y(0)\|_{L^2}, \forall t\in[0, n\tau_{1}],
\end{equation}
hence $y$ exists on $[0,+\infty)$ and
\begin{equation}
          \|y(t)\|_{L^2}\leq C (\lambda) e^{-\frac{(\lambda-1)}{2} t}            \|y(0)\|_{L^2}, \forall t\in[0, +\infty).
\end{equation}
This ends the proof of Theorem \ref{thm-main-burgers}.}
}

\section{Conclusion}

\subsection{Quantitative studies on $C_{r}(\lambda, m)$}\label{subsection:quantitative}
Thanks to the precise analysis introduced in this paper. The next step could also be on the quantitative study of the stabilization cost, namely on the value of the constant $C_r(\lambda, m)$ in Theorem \ref{thm-main-linear}. For example, let $m=0$. 
For any fixed $\lambda\notin \mathcal{N}$, even if we do not have enough information on the exact value of $C_r(\lambda)$, it can be conjectured that the optimal value of $C_r(\lambda)$ (at least obeying our feedback law) tends to $+\infty$ as $|r|$ tends to $1/2^-$.

The appearance of the critical set $\mathcal{N}$ also indicates that, for any $r\in (-1/2, 1/2)$ fixed, as $\lambda$ tends to $\mathcal{N}$ the value of  $C_r(\lambda)$ tends to $+\infty$.  Therefore, it seems that by adapting this feedback we are not able to achieve $e^{C\sqrt{\lambda}}$ type estimates, at least not uniformly on $\lambda\in \R^+$.  
However, we believe that with the precise functional settings treated in this paper, we are much more closed to such quantitative results.  Indeed, it is still possible and reasonable to expect $e^{C\sqrt{\lambda}}$ estimate on $\{\lambda= 4N+2; N\in \N^*\}$.
\subsection{General parabolic equations}
It is natural to ask whether our framework also adapts  general parabolic equations, namely 
\begin{gather}
    \begin{cases}
    \partial_t y-\Delta y+ a_1(x) \partial_x y+ a_2(x) y= \phi_{1} K_{1}(y)+\phi_{2} K_{2}(y), \\    y(0)= y_0,
    \end{cases}
\end{gather}
with $a_i(x)$ satisfying suitable regularity assumption. 

By regarding the lower order operators $a_1(x) \partial_x + a_2(x)$ as source, the same feedback law and transformation yields the operator equality. However, on the next step, when applying $T$ to the evolution equation, the source   $a_1(x) \partial_x y + a_2(x)y$ turned out to be $T\left(a_1(x) \partial_x (T^{-1}z)+ a_2(x) (T^{-1}z)\right)$ which may become even stronger than the $-\lambda z$ damping produced by backstepping. 

Therefore, it seems that we need to perform backstepping directly on the elliptic operator $-\Delta +a_1(x) \partial_x + a_2(x)$. In the case that $a_1(x)=0$, the analysis is probably simpler as the operator is remained to be self-adjoint. However, losing those explicit formulation of eigenvalues and eigenfunctions make it more challenging to conclude Lemma \ref{lem-rbheat-1}. While the other case that $a_1(x)\neq 0$ is of course more delicate,  maybe the perturbation theory of  resolvent estimates should be applied, a good news is that due to the spectral gaps between different eigenvalues are increasing, it is possible that no smallness of $a_1(x) \partial_x + a_2(x)$ should be assumed.  Technically speaking, due to the appearance of  eigenvalues admitting double multiplicity the bifurcation phenomenon when splitting those eigenvalues should appear,  the resolvent analysis involved  would be more interesting and more delicate to some related works such as \cite{Beauchard05, coron:hal-03161523}.

\subsection{Stabilization with one scalar control}
It is proved in this work that two scalar controls are necessary and sufficient for the rapid stabilization of the heat equation provided some decay information, because of those double eigenvalues.   But if we work on more general parabolic equations, for which it is possible that every eigenvalues are simple and isolated, then probably one scalar control, of course always admitting suitable decay properties, is sufficient to conclude controllability and rapid stabilization.   

According to the ``return philosophy" introduced by Coron \cite{CoronBook}, it is still possible to stabilize nonlinear system even if the linearized system is not stabilizable. Therefore, it is also of interest to consider the rapid stabilization  of the viscous Burgers equation with one scalar control.

\section*{Acknowledgements}
Ludovick Gagnon was partially supported by the French Grant ANR ODISSE (ANR-19-CE48-0004-01) and the French Grant ANR TRECOS (ANR-20-CE40-0009). Amaury Hayat was financially supported by Ecole des Ponts Paristech.  Shengquan Xiang was financially supported by the Chair of Partial Differential Equations at EPFL. Christophe Zhang was partially funded by the Chair Dynamics Control and Numerics (Alexander von Humboldt Professorship) of the Department of Data Science of the Friedrich Alexander Universität Erlangen-Nürnberg, and by the INRIA Grand-Est.

\bibliographystyle{plain}    
\bibliography{biblio}

\end{document}